\documentclass[a4paper]{amsart}
\usepackage{hyperref}
\usepackage{txfonts, amsmath,amstext,amsthm,amscd,amsopn,verbatim,amssymb, amsfonts}
\usepackage{fullpage}
\usepackage{todonotes}

\usepackage[bbgreekl]{mathbbol}
\usepackage{enumerate}

\usepackage{float}
\restylefloat{table}

\usepackage{tikz}
\usepackage{tikz-cd}
\usetikzlibrary{matrix}
\usetikzlibrary{shapes}
\usetikzlibrary{arrows}
\usetikzlibrary{calc,3d}
\usetikzlibrary{decorations,decorations.pathmorphing,decorations.pathreplacing,decorations.markings}
\usetikzlibrary{through}

\tikzset{snakeit/.style={decorate, decoration={snake, amplitude=.2mm,segment length=1mm}}}

\tikzset{ext/.style={circle, draw,inner sep=1pt}, int/.style={circle,draw,fill,inner sep=2pt},nil/.style={inner sep=1pt}}
\tikzset{cy/.style={circle,draw,fill,inner sep=2pt},scy/.style={circle,draw,inner sep=2pt},scyx/.style={draw,cross out,inner sep=2pt},scyt/.style={draw,regular polygon,regular polygon sides=3,inner sep=0.95pt}}
\tikzset{exte/.style={circle, draw,inner sep=3pt},inte/.style={circle,draw,fill,inner sep=3pt}}
\tikzset{diagram/.style={matrix of math nodes, row sep=3em, column sep=2.5em, text height=1.5ex, text depth=0.25ex}}
\tikzset{diagram2/.style={matrix of math nodes, row sep=0.5em, column sep=0.5em, text height=1.5ex, text depth=0.25ex}}
\tikzset{rowcolsep/.style={column sep=.2cm, row sep=.1cm}}

\tikzset{
  crossed/.style={
    decoration={markings,mark=at position .5 with {\arrow{|}}},
    postaction={decorate},
    shorten >=0.4pt}}

\tikzset{every picture/.style={baseline=-.65ex} }

\newcommand{\Ed}{{
\begin{tikzpicture}[baseline=-.8ex,scale=.5]
\node[nil] (a) at (0,0) {};
\node[nil] (b) at (1,0) {};
\draw (a) edge[-latex] (b);
\end{tikzpicture}}}

\newcommand{\dE}{{
\begin{tikzpicture}[baseline=-.8ex,scale=.5]
\node[nil] (a) at (0,0) {};
\node[nil] (b) at (1,0) {};
\draw (a) edge[latex-] (b);
\end{tikzpicture}}}

\newcommand{\EdE}{{
\begin{tikzpicture}[baseline=-.65ex,scale=.5]
 \node[nil] (a) at (0,0) {};
 \node[int] (b) at (1,0) {};
 \node[nil] (c) at (2,0) {};
 \draw (a) edge[-latex] (b);
 \draw (b) edge[latex-] (c);
\end{tikzpicture}}}

\newcommand{\EEpassing}{{
\begin{tikzpicture}[baseline=-.65ex,scale=.5]
 \node[nil] (a) at (0,0) {};
 \node[int] (b) at (1,0) {};
 \node[nil] (c) at (2,0) {};
 \draw (a) edge[-latex] (b);
 \draw (b) edge[-latex] (c);
\end{tikzpicture}}}

\newcommand{\dEd}{{
\begin{tikzpicture}[baseline=-.65ex,scale=.5]
 \node[nil] (a) at (0,0) {};
 \node[int] (b) at (1,0) {};
 \node[nil] (c) at (2,0) {};
 \draw (a) edge[latex-] (b);
 \draw (b) edge[-latex] (c);
\end{tikzpicture}}}

\newcommand{\Ess}{{
\begin{tikzpicture}[baseline=-.65ex,scale=.5]
 \node[nil] (a) at (0,0) {};
 \node[nil] (c) at (1.4,0) {};
 \draw (a) edge[crossed,->] (c);
\end{tikzpicture}}}

\newcommand{\Essu}{{
\begin{tikzpicture}[baseline=-.65ex,scale=.5]
 \node[nil] (a) at (0,0) {};
 \node[nil] (c) at (1.4,0) {};
 \draw (a) edge[crossed] (c);
\end{tikzpicture}}}

\newcommand{\ET}{{
\begin{tikzpicture}[baseline=-.65ex,scale=.5]
 \node[nil] (a) at (0,0) {};
 \node[nil] (c) at (1,0) {};
 \draw (a) edge[very thick] (c);
\end{tikzpicture}}}

\theoremstyle{plain}
  \newtheorem{thm}{Theorem}
  \newtheorem{defi}[thm]{Definition}
  \newtheorem{prop}[thm]{Proposition}
  
  \newtheorem{cor}[thm]{Corollary}
  \newtheorem{conjecture}[thm]{Conjecture}
  \newtheorem{lemma}[thm]{Lemma}
\theoremstyle{definition}
  
  \newtheorem{rem}{Remark}



\newcommand{\Hom}{\mathrm{Hom}}

\newcommand{\R}{{\mathbb{R}}}
\newcommand{\K}{{\mathbb{K}}}
\newcommand{\Z}{{\mathbb{Z}}}



\newcommand{\Def}{\mathrm{Def}}

\DeclareMathOperator{\id}{id}

\newcommand{\op}{\mathcal}

\newcommand{\Lie}{\mathsf{Lie}}

\newcommand{\Ass}{\mathsf{Assoc}}
\newcommand{\Com}{\mathsf{Com}}

\newcommand{\bpm}{\begin{pmatrix}}
\newcommand{\epm}{\end{pmatrix}}

\newcommand{\sym}{{\mathbb{S}}}

\newcommand{\grac}{\mathrm{grac}}

\newcommand{\GC}{\mathsf{GC}}
\newcommand{\OG}{\mathsf{OG}}
\newcommand{\OGC}{\mathsf{OGC}}
\newcommand{\HG}{\mathsf{HG}}
\newcommand{\HGC}{\mathsf{HGC}}
\newcommand{\HOG}{\mathsf{HOG}}
\newcommand{\HOsG}{\mathsf{HO^{sk}G}}
\newcommand{\HOGC}{\mathsf{HOGC}}

\newcommand{\HHOGC}{\mathsf{HHOGC}}
\newcommand{\HGS}{\mathsf{HG}^S}
\newcommand{\HGCS}{\mathsf{HGC}^S}
\newcommand{\HOGS}{\mathsf{HOG}^S}
\newcommand{\HOsGS}{\mathsf{HO^{sk}G}^S}
\newcommand{\HOGCS}{\mathsf{HOGC}^S}
\newcommand{\HHOGS}{\mathsf{HHOG}^S}
\newcommand{\HHOGCS}{\mathsf{HHOGC}^S}
\newcommand{\HGs}{\mathsf{HG}^s}

\newcommand{\HOGs}{\mathsf{HOG}^s}

\newcommand{\G}{\mathsf{G}}

\newcommand{\mV}{\mathsf{V}}
\newcommand{\mE}{\mathsf{E}}
\newcommand{\mH}{\mathsf{H}}
\newcommand{\mB}{\mathsf{B}}
\newcommand{\mO}{\mathsf{O}}
\newcommand{\mD}{\mathsf{D}}

\newcommand{\La}{\Lambda}
\newcommand{\F}{F}

\newcommand{\bbS}{\mathbb{S}}

\DeclareMathOperator{\sgn}{sgn}


\newcommand{\MM}{{\mathcal M}}
\newcommand{\HGK}{\mathsf{HGK}}
\newcommand{\GK}{\mathsf{GK}}
\newcommand{\RGC}{\mathsf{RGC}}
\newcommand{\Q}{\mathbb{Q}}

\newcommand{\DD}{\mathbf{D}}
\newcommand{\kk}{\mathbf{k}}
\newcommand{\FT}{\mathcal{F}}
\newcommand{\bMM}{\overline{\MM}}
\newcommand{\ori}{\mathbf{o}}
\newcommand{\gstar}{\mathrm{star}}
\newcommand{\pGrav}{\mathfrak p \mathit{Grav}}
\newcommand{\CC}{\mathcal{C}}
\newcommand{\fD}{\mathcal{D}}
\newcommand{\freeM}{\mathbb{F}}

\newcommand{\AC}{\mathsf{AC}}
\newcommand{\Frob}{\mathsf{Frob}}
\newcommand{\LieB}{\mathsf{LieB}}
\newcommand{\ILieB}{\mathsf{ILieB}}
\newcommand{\IFrob}{\mathsf{IFrob}}
\newcommand{\RGra}{\mathsf{RGra}}

\begin{document}
\title{Oriented hairy graphs and moduli spaces of curves}

%
%

\author{Assar Andersson}
\address{
University of Luxembourg\\
Maison du Nombre\\
6, Avenue de la Fonte\\
L-4364 Esch-sur-Alzette\\
Luxembourg
}
\email{assar.andersson@uni.lu}

\author{Thomas Willwacher}
\address{Department of Mathematics\\ ETH Zurich\\ R\"amistrasse 101 \\ 8092 Zurich, Switzerland}
\email{thomas.willwacher@math.ethz.ch}

\author{Marko \v Zivkovi\' c}
\address{
University of Luxembourg\\
Maison du Nombre\\
6, Avenue de la Fonte\\
L-4364 Esch-sur-Alzette\\
Luxembourg
}
\email{marko.zivkovic@uni.lu}

\thanks{
}

\subjclass[2010]{81Q30, 18G35,57Q45}
\keywords{Graph complexes}


\begin{abstract}
We discuss a graph complex formed by directed acyclic graphs with external legs.
This complex comes in particular with a map to the ribbon graph complex computing the (compactly supported) cohomology of the moduli space of points $\MM_{g,n}$, extending an earlier result of Merkulov-Willwacher.
It is furthermore quasi-isomorphic to the hairy graph complex computing the weight 0 part of the compactly supported cohomology of $\MM_{g,n}$ according to Chan-Galatius-Payne. Hence we can naturally connect the works Chan-Galatius-Payne and of Merkulov-Willwacher and the ribbon graph complex and obtain a fairly satisfying picture of how all the pieces and various graph complexes fit together, at least in weight zero.
\end{abstract}

\maketitle

\section{Introduction}
It has been shown recently by Chan, Galatius and Payne \cite{CGP1,CGP2} that the "commutative" Kontsevich graph complex computes the top weight part of the cohomology of the moduli spaces of curves. A related, albeit weaker result had been shown by Merkulov and Willwacher \cite{MW}, who constructed a map between the Kontsevich graph complex and the Kontsevich-Penner ribbon graph complex, which also computes the cohomology of the moduli space.
The purpose of the present note is to generalize the result of Merkulov-Willwacher, slightly simplify the result of Chan-Galatius-Payne and connect the two results.

To this end, one main ingredient is the ``hairy'' graph complex $\HOGCS_n$ whose elements are formal series of isomorphism classes of directed acyclic graphs with two kinds of vertices:
\begin{itemize}
\item Internal vertices are at least bivalent and have at least one outgoing edge. There are no passing vertices, i.e.\ bivalent vertices with one incoming and one outgoing edge.
\item External vertices are 1-valent sinks, i.e.\ they have one incoming edge.
These vertices are ``distinguishable'' and identified with the elements of a given finite set $S$.
\end{itemize}
An external vertex together with the edge attach to it can be considered as a ``hair'' on an internal vertex, hence the name of the complex. We also call it ``leg''.
Here are some typical such graphs:
\begin{equation}\label{equ:HOGexample}
\begin{tikzpicture}[scale=.6, yshift=.5cm]
\node[int] (v1) at (0:1) {};
\node[int] (v2) at (90:1) {};
\node[int] (v3) at (180:1) {};
\node[int] (v4) at (-90:1) {};
\node[ext] (h1) at (-90:2) {1};
\draw[-latex] (v2) edge (v4) (v1) edge (v4) edge (v2) (v3) edge (v4) edge (v2) (v4) edge (h1);
\end{tikzpicture}
,\quad
\begin{tikzpicture}[scale=.6]
\node[int] (v) at (0,0){};
\node[ext] (h1) at (90:1) {1};
\node[ext] (h2) at (210:1) {2};
\node[ext] (h3) at (330:1) {3};
\draw[-latex] (v) edge (h1);
\draw[-latex] (v) edge (h2);
\draw[-latex] (v) edge (h3);
\end{tikzpicture}
,\quad
\begin{tikzpicture}[scale=.6,yshift=.5cm]
\node[int] (v1) at (90:1){};
\node[int] (v2) at (210:1){};
\node[int] (v3) at (-30:1){};
\node[ext] (h1) at (90:2) {1};
\node[ext] (h2) at (210:2) {2};
\node[ext] (h3) at (330:2) {3};
\draw[-latex] (v1) edge (h1);
\draw[-latex] (v2) edge (h2);
\draw[-latex] (v3) edge (h3);
\draw[-latex] (v1) edge (v2) edge (v3) (v2) edge (v3);
\end{tikzpicture}
\quad\quad \text{for $S=\{1\}$ and $S=\{1,2,3\}$.}
\end{equation}
The differential $\delta$ on $\HOGCS_{n}$ is given by splitting vertices. For a more precise definition of $\HOGCS_{n}$ and sign and degree conventions we refer to Section \ref{sec:HOG} below.

The graph complex $\HOGCS_{n}$ can be connected to several objects and constructions in the literature.
First, one may consider the ribbon graph complex $\RGC^{S}$ that originates from work of Penner \cite{penner} and Kontsevich \cite{K3} whose genus $g$ piece $\mB_g\RGC^S$ computes the compactly supported cohomology of the moduli space $\MM_{g,S}$ of genus $g$ curves with $|S|$ marked points labelled by the set $S$, $H^{\bullet+|S|}\left(\mB_g\RGC^{S}\right) = H_c^{\bullet}\left(\MM_{g,S}\right)$.

We show in Section \ref{sec:HOGCtoRGC} below that there is a natural map of complexes
\[
 \HOGCS_{1} \to \RGC^{S}
\]
that sends graphs of loop order $g$ to genus $g$ ribbon graphs.
We conjecture that our map induces an injection in cohomology.

On the other hand, our graph complex $\HOGCS_{1}$ is closely related to its undirected analog $\HGCS_{0}$, defined in the same manner, except that the graphs are undirected, and all internal vertices need to be at least 3-valent.
Some typical such graphs are as follows:

\begin{equation}\label{equ:HGexample}
 \begin{tikzpicture}[scale=.5]
\draw (0,0) circle (1);
\node[ext] (h1) at (-2.2,0) {1};
\draw (-180:1) node[int]{} edge (h1);
\end{tikzpicture}
,\quad
\begin{tikzpicture}[scale=.6]
\node[int] (v1) at (0,0){};
\node[int] (v2) at (180:1){};
\node[int] (v3) at (60:1){};
\node[int] (v4) at (-60:1){};
\node[ext] (h1) at (-2,0) {1};
\draw (v1) edge (v2) edge (v3) edge (v4) (v2) edge (v3) edge (v4) edge (h1);
\draw (v3) edge (v4);
\end{tikzpicture}
\,,\quad
\begin{tikzpicture}[scale=.5]
\node[int] (v1) at (-1,0){};
\node[int] (v2) at (0,1){};
\node[int] (v3) at (1,0){};
\node[int] (v4) at (0,-1){};
\node[ext] (h1) at (-2,0) {1};
\node[ext] (h2) at (2,0) {2};
\draw (v1)  edge (v2) edge (v4) edge (h1) (v2) edge (v4) (v3) edge (v2) edge (v4) edge (h2);
\end{tikzpicture}
\, ,\quad
\begin{tikzpicture}[scale=.6]
\node[int] (v) at (0,0){};
\node[ext] (h1) at (90:1) {1};
\node[ext] (h2) at (210:1) {2};
\node[ext] (h3) at (330:1) {3};
\draw (v) edge (h1);
\draw (v) edge (h2);
\draw (v) edge (h3);
\end{tikzpicture}
\quad\quad \text{for $S=\{1\}$, $S=\{1,2\}$ and $S=\{1,2,3\}$.}
\end{equation}
Chan, Galatius and Payne have recently shown the following result.
\begin{thm}[Chan, Galatius, Payne \cite{CGP2}]\label{thm:CGP2}
The weight 0 summand of the compactly supported cohomology of the moduli space $W_0H_c(\MM_{g,S})$ (with $2g+|S|\geq 4$) is computed by the $g$-loop part $\mB_g\HGCS_{0}$ of the graph complex $\HGCS_{0}$,
\[
H^{\bullet}\left(\mB_g\HGCS_{0}\right) \cong W_0H_c^{\bullet-|S|}\left(\MM_{g,|S|}\right).
\]
\end{thm}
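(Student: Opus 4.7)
The plan is to follow the Chan--Galatius--Payne strategy, whose backbone is Deligne's weight theory applied to the boundary divisor of the Deligne--Mumford compactification. For a smooth Deligne--Mumford stack $U$ with simple normal crossings compactification $\bar U$ and boundary $D=\bar U\setminus U$, the weight zero summand of $H_c^\bullet(U;\Q)$ is computed by the reduced rational cohomology of the dual complex $\Delta(D)$ of $D$, shifted by one. Applied to $\MM_{g,S}\subset\bar\MM_{g,S}$, this yields a natural isomorphism
\[
W_0 H_c^k(\MM_{g,S};\Q) \;\cong\; \widetilde H^{k-1}(\Delta_{g,S};\Q),
\]
where $\Delta_{g,S}$ is the dual complex of $\partial\bar\MM_{g,S}$. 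This step is classical and can be extracted from Deligne's weight spectral sequence for $\bar\MM_{g,S}$ together with Payne's treatment for Deligne--Mumford stacks.

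Next I would identify $\Delta_{g,S}$ combinatorially as a symmetric $\Delta$-complex whose cells are indexed by isomorphism classes of stable weighted dual graphs: connected graphs with $|S|$ labeled legs, equipped with a nonnegative integer genus weight $g_v$ at each vertex $v$, subject to the stability condition $2g_v-2+\mathrm{val}(v)>0$ and the total genus condition $g=b_1(\Gamma)+\sum_v g_v$. Face maps are edge contractions, where a loop-edge contraction at $v$ increases $g_v$ by one and a non-loop contraction merges two vertices while adding their genus weights. This identification follows from the deformation theory of stable nodal curves.

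The decisive step is to show that the subcomplex $\Delta_{g,S}^{\mathrm{wt}>0}\subset \Delta_{g,S}$ spanned by graphs with at least one positive-genus vertex or at least one loop edge is contractible. Granting this, $\Delta_{g,S}$ has the homotopy type of the quotient $\Delta_{g,S}/\Delta_{g,S}^{\mathrm{wt}>0}$, whose cells are in bijection with isomorphism classes of connected loopless graphs $\Gamma$ with $b_1(\Gamma)=g$, all vertices at least trivalent, and legs labeled by $S$. A direct check of orientations (an orientation of a cell being an ordering of its edges, i.e.\ an orientation of $\R^{|E(\Gamma)|}$) and degree conventions identifies the reduced cellular cochain complex of this quotient, shifted by $|S|$, with $\mB_g\HGCS_0$ endowed with the edge-splitting/contracting differential of Section~\ref{sec:HOG}.

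The principal obstacle is the contractibility of $\Delta_{g,S}^{\mathrm{wt}>0}$, which is the technical heart of CGP. One proves it via an explicit poset-theoretic deformation retract that progressively collapses loops and merges positive-genus vertices, compatibly with the face relations of the symmetric $\Delta$-complex; the delicacy lies in verifying that the retract is well defined on isomorphism classes and respects the stability condition at every intermediate step. Once this is granted, the remaining bookkeeping -- matching signs, the degree shift by $|S|$, and the combinatorics of trivalent-or-higher legged graphs with the generators of $\mB_g\HGCS_0$ -- is routine.
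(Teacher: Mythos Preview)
Your proposal is correct and faithfully outlines the original Chan--Galatius--Payne argument. However, the paper explicitly advertises a \emph{simplified, independent} proof, and its route is genuinely different from yours.

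The paper works on the operadic side rather than on the dual complex of the boundary divisor. It introduces the Getzler--Kapranov complex $\GK=\FT(H_\bullet(\bMM))$, the Feynman transform of the homology modular operad of the Deligne--Mumford compactification, and invokes two black boxes: Getzler--Kapranov's Koszul-type duality between $\bMM$ and $\MM$, and the formality of $\bMM$ as a modular operad (Guill\'en--Navarro--Pascual--Roig). These together give $H^\bullet(\GK)\cong H^\bullet_c(\MM)$, with a weight grading on $\GK$ coming simply from the cohomological degree of the decorations $H^\bullet(\bMM_{g_x,\gstar(x)})$. The weight~$0$ summand is then literally the Feynman transform of $H_0(\bMM)=\Com^{mod}$, i.e.\ the complex $\HGC_0^{S,mod}$ of graphs with genus-decorated vertices. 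The final step, replacing $\HGC_0^{S,mod}$ by $\HGC_0^S$, is the paper's Lemma~\ref{lem:HGCmod}, proved by a short spectral sequence on the number of vertices.

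The comparison is instructive. Your contractibility of $\Delta_{g,S}^{\mathrm{wt}>0}$ and the paper's Lemma~\ref{lem:HGCmod} play the same structural role: both kill the contributions of positive-genus vertices and loop edges. But where CGP build an explicit deformation retract on a symmetric $\Delta$-complex, the paper reduces this to the statement that tadpole creation cancels against lowering the genus decoration on the associated graded, which is a two-line computation. The price the paper pays is importing the formality theorem for $\bMM$; the payoff is that the entire ``technical heart'' you flag as the principal obstacle disappears. Conversely, your approach is self-contained in Hodge theory and the combinatorics of $\partial\bMM_{g,S}$, and does not require any operadic formality statement.
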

We will give a simplified proof of this Theorem in Section \ref{sec:CGP} below.

The main technical result of this paper is then the following.
\begin{thm}\label{thm:main}
There is an explicit, combinatorially defined quasi-isomorphism
\[
\F : \HOGCS_{n+1} \to \HGCS_{n}
\]
which is natural in $S$ and respects the grading by loop order on both sides.
\end{thm}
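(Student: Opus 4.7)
The plan is to adapt the strategy Willwacher used for the hairless analog $\GCor_{n+1} \simeq \GC_n$ to the setting with external legs. The map $\F$ is defined by forgetting the edge directions, with signs chosen so as to convert canonical orderings on directed-edge sets into orderings on undirected-edge sets (together with the degree shift from $n+1$ to $n$), and declaring the result to vanish whenever the underlying undirected graph contains an internal vertex of valence less than three. Since a bivalent internal vertex in $\HOGCS_{n+1}$ must have two outgoing and zero incoming edges (bivalent passing vertices being forbidden in $\HOGCS_{n+1}$), this cutoff removes precisely the configurations that would violate the valence requirement on the target side, and the image does lie in $\HGCS_n$. That $\F$ intertwines the two vertex-splitting differentials is then a routine sign check.

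To prove $\F$ is a quasi-isomorphism, I would introduce an auxiliary complex $\widetilde{\HOGCS}_{n+1}^S$ that enlarges $\HOGCS_{n+1}$ by permitting bivalent passing vertices and by relaxing the ``at least one outgoing edge'' requirement on internal vertices (so internal sinks are allowed). The differential extends in the obvious way, and $\F$ factors as
\[
\HOGCS_{n+1} \hookrightarrow \widetilde{\HOGCS}_{n+1}^S \xrightarrow{\tilde\F} \HGCS_n,
\]
where $\tilde\F$ forgets directions. The proof then reduces to showing that both arrows are quasi-isomorphisms.

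For the inclusion, one filters $\widetilde{\HOGCS}_{n+1}^S$ by the number of ``forbidden'' internal vertices (passing vertices plus internal sinks) and constructs an explicit contracting homotopy of antenna-removal type: maximal directed chains of passing vertices are collapsed to single edges, and internal sinks are absorbed into their adjacent internal vertex. The hairy setting introduces the subtlety that the homotopy must never attempt to contract toward an external vertex, but since external vertices are distinguished 1-valent sinks labeled by $S$ this can be arranged cleanly. For $\tilde\F$, one uses a spectral sequence induced by a filtration that exposes, on the $E_0$-page, the cohomology of the space of (not-necessarily-acyclic) orientations of a fixed underlying undirected graph compatible with the fixed incoming directions at the external vertices. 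A signed-sum identity over such orientations should then yield a one-dimensional $E_1$-contribution per underlying undirected graph, concentrated in precisely the degree that accounts for the shift $n+1 \mapsto n$ in the edge- and vertex-degree conventions.

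The main obstacle will be this second spectral sequence: in the hairless case the orientation cohomology computation is classical and leverages the absence of any distinguished vertices, whereas here the mandatory incoming orientations on the leg edges partially rigidify the orientation sum and must be tracked carefully through the sign identity. Naturality in $S$ and preservation of loop order are immediate from the construction, since forgetting directions touches neither the external labels nor the first Betti number, and all auxiliary constructions respect these decompositions.
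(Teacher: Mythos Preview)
Your proposed $F$ is not the map the paper constructs, and as described it cannot work. Forgetting edge directions and killing graphs with bivalent internal vertices is not a chain map: when the differential splits an internal vertex of $\Gamma\in\HOGCS_{n+1}$ into two $\geq 3$-valent pieces, the new edge can often be directed either way without creating a sink or a directed cycle, so a single term of $\delta F(\Gamma)$ is typically hit by several terms of $F(\delta\Gamma)$ with no mechanism for cancellation. There is also a degree mismatch: edges and vertices carry different weights in the two complexes (the shift $n{+}1\mapsto n$ changes both), and a map preserving the numbers of edges and vertices shifts the cohomological degree by $v-e$, which is nonzero once there are loops. Neither Willwacher's original argument for $\OGC_{n+1}\simeq\GC_n$ nor the later explicit map of \v Zivkovi\'c is of this ``forget directions'' type.

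The paper's $F$ does something sharper. It sends a directed acyclic graph to zero unless it is a \emph{forest graph}, meaning every internal vertex of valence $\geq 3$ has exactly one outgoing edge; on such a graph one first collapses each bivalent-source pattern $\dEd$ to a single edge and only then forgets the remaining directions. The collapsing step changes the vertex and edge counts in exactly the way needed to make the degrees match, and the forest-graph condition is what makes the chain-map identity hold. Dually---and this is how the paper actually argues---one defines $\Phi:\HGS_n\to\HOGS_{n+1}$ by summing over all spanning forests of $\Gamma$ rooted at the external vertices, directing forest edges toward their roots and replacing every non-forest edge by $\dEd$. The quasi-isomorphism proof then passes to a skeleton model carrying an auxiliary edge-differential $d_E$, filters by the number of vertices so that only $d_E$ survives on the associated graded, and reduces the remaining per-graph statement to a chain of elementary one-edge comparisons indexed by the edges of a fixed spanning forest.

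Your second spectral-sequence step would also not close: the vertex-splitting differential changes the underlying undirected graph, so filtering by that graph leaves zero differential on $E_0$, and the full multi-dimensional space of admissible orientations survives to $E_1$ rather than a single class per graph.
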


We then conjecture that the following diagrams commute, possibly up to multiplying $F$ by a conventional scalar:
\[
\begin{tikzcd}
H\left(\mB_g\HGCS_{0}\right)  \ar[hookrightarrow]{r} & H_c\left(\MM_{g,S}\right)[-|S|] \ar[-]{d}{\cong} \\
H\left(\mB_g\HOGCS_{1}\right) \ar{u}{\cong}[swap]{F}  \ar{r} & H\left(\mB_g\RGC_{S}\right)
\end{tikzcd},
\]
Here the upper horizontal arrow is the map of Chan-Galatius-Payne.

Theorem \ref{thm:main} above also nicely connects with various previous results on graph complexes with hairs.
First, we may symmetrize or antisymmetrize over permutations of labels on external vertices to obtain the hairy graph complexes $\HGC_{m,n}$ which compute the rational homotopy groups of the space of long embeddings of $\R^m$ into $\R^n$, see \cite{AT, FTW}.
\[
\HGC_{m,n} = \prod_{r>0} \left( \HGC_{n}^{\{1,\dots,r\}} \otimes  \Q[-m]^{\otimes k}\right)^{\sym_r}[m]
\]

These complexes have a natural dg Lie algebra structure, realizing the Browder brackets on the embedding spaces' homotopy groups.
Furthermore, for $n=m$ there is a canonical Maurer-Cartan element
\[
m_0 = 
\begin{tikzpicture}[baseline=-.65ex]
\node[ext] (h1) at (0,0) {\;};
\node[ext] (h2) at (0.7,0) {\;};
\draw (h1) edge (h2);
\end{tikzpicture}
\in \HGC_{n,n},
\]
that corresponds to the identity map $\R^n\to \R^n$.
The corresponding twisted complex $\left(\HGC_{n,n},\delta+[m_0,-]\right)$ is essentially quasi-isomorphic to the non-hairy graph complex $\GC_n$ up to degree shift.
More precisely, the map (essentially inducing a quasi-isomorphism)
\[
\GC_n[-1] \to \left(\HGC_{n,n},\delta+[m_0,-]\right)
\]
is obtained by attaching one hair to a non-hairy graph, see \cite{TW2} for a detailed discussion.

The above construction may be paralleled for the directed acyclic hairy graph complexes $\HOGC_{n}^S$. By (anti)\-symmetrizing over hairs we obtain graph complexes $\HOGC_{m,n}$, see Section \ref{sec:HOG} for details. They also carry natural dg Lie algebra structures. Furthermore, there is a natural Maurer-Cartan element
\[
m_1 =
\sum_{k\geq 2} 
\frac 2 {k!}
\underbrace{
\begin{tikzpicture}[baseline=-.65ex]
\node[int] (v) at (0,.5) {};
\node[ext] (h1) at (-.7,-.2) {\;};
\node[ext] (h2) at (-.3,-.2) {\;};
\node[ext] (h3) at +(.7,-.2) {\;};
\draw (v) edge[-latex] (h1) edge[-latex] (h2)  edge[-latex] (h3) +(.25,-.7) node {$\scriptstyle \dots$};
\end{tikzpicture}
}_{k\times}
\in \HOGC_{n,n+1}.
\]
Twisting by this element we obtain a complex which is essentially quasi-isomorphic to the non-hairy oriented graph complex $\OGC_{n+1}$ of \cite{Woriented} (see also Proposition \ref{prop:OGCtoHOGC} below).
More precisely, one has a map of complexes
\[
\OGC_{n+1}[-1] \to (\HOGC_{n,n+1}, \delta+[m_1,-])
\]
obtained by summing over all ways of attaching hairs to a non-hairy graph, see section \ref{sec:hairynonhairOGC} below.

Now it follows from Theorem \ref{thm:main} and the construction of the map therein that $\HGC_{m,n}$ and $\HOGC_{m,n+1}$ are quasi-isomorphic as complexes. This result can be strengthened so as to cover also the dg Lie structures.
\begin{thm} \label{thm:mainlie}
	The maps of Theorem \ref{thm:main} induce a quasi-isomorphism of dg Lie algebras
	\[
	\HOGC_{m,n+1}\to \HGC_{m,n}.
	\]
	In the case $m=n$ this morphism takes the canonical MC element $m_1\in \HOGC_{n,n+1}$ to the MC element $m_0\in \HGC_{n,n}$,
and furthermore makes the following diagram of complexes commute up to homotopy in loop orders $\geq 2$.
	\begin{equation}\label{equ:thmmainliecd}
	\begin{tikzcd}
		\OGC_{n+1}[-1] \ar{r}{\simeq} \ar{d}{\simeq_{\geq 2}} & (\HOGC_{n,n+1}, \delta+[m_1,-]) \ar{d}{\simeq}\\
	\GC_n[-1] \ar{r}{\simeq_{\geq 2}} &  (\HGC_{n,n},\delta+[m_0,-]) 
	\end{tikzcd}\, .
	\end{equation}
	The left-hand vertical map is the quasi-isomorphism of \cite{MultiOriented} (up to a conventional prefactor, see section \ref{sec:GCOGCcomparison} below), and the lower horizontal map has been described in \cite{grt,TW2} (see also Proposition \ref{prop:GCHGC}). All arrows preserve the loop order.
	The arrows labelled $\simeq$ are quasi-isomorphisms, and the arrows labelled $\simeq_{\geq 2}$ are quasi-isomorphisms in loop orders $\geq 2$.
\end{thm}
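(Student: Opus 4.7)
The plan is to promote $\F$ to a dg Lie morphism via (anti)symmetrization, verify bracket compatibility via a locality-on-hairs argument, check $\F(m_1)=m_0$ by direct computation, and obtain the commuting square from the same locality principle. For the first step, the naturality of $\F$ in $S$ asserted in Theorem \ref{thm:main} means $\F:\HOGCS_{n+1}\to\HGCS_n$ is $\sym_r$-equivariant for each $r$ and hence descends to a chain map on the (anti)symmetric invariants assembling to $\HOGC_{m,n+1}$ and $\HGC_{m,n}$. Over $\Q$, invariants of finite-group actions are exact, so the induced map remains a quasi-isomorphism and preserves loop order.

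The dg Lie brackets on both complexes are defined by hair pairing: for two hairy graphs $\gamma_1,\gamma_2$, one sums over ways of gluing a hair of $\gamma_1$ with a hair of $\gamma_2$ into a single edge connecting their internal components, with signs coming from edge orientations and the hair degree shifts $[-m]$. Because the combinatorial construction of $\F$ in the proof of Theorem \ref{thm:main} acts locally on hairs --- its effect near a hair depends only on the incident vertex data, not on the rest of the graph --- hair-gluing commutes with $\F$. After a careful sign check, this yields $\F[\gamma_1,\gamma_2]=[\F\gamma_1,\F\gamma_2]$. This bracket compatibility is the main technical obstacle: one must simultaneously track edge orderings, orientation signs, and the hair-shift signs across $\F$, and the cleanest route is likely to package $\F$'s local rule as a morphism of an auxiliary operad or prop so that bracket preservation becomes a functoriality statement.

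For the MC condition, apply $\F$ to each summand of $m_1 = \sum_{k\geq 2}\frac{2}{k!}\,\gstar_k$. Using the explicit combinatorial formula for $\F$, the $k=2$ star (one internal vertex with two outgoing hairs) is shown to map to the stick $m_0$, while the $k\geq 3$ summands are expected to vanish by a direct check against the combinatorial rules defining $\F$ (the relevant internal vertices have no incoming edges, a configuration killed by $\F$ for $k\geq 3$). Hence $\F(m_1)=m_0$ and $\F$ descends to the twisted complexes, furnishing the right vertical arrow of (\ref{equ:thmmainliecd}).

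For the commuting square, both horizontal arrows attach a hair to each vertex of a non-hairy graph and then twist by the appropriate MC element. Since $\F$ acts locally on hairs, it commutes with vertex-wise hair insertion; similarly, the left vertical map of \cite{MultiOriented} is defined by an orientation-forgetting operation that commutes with hair insertion. The square therefore commutes strictly on the chain level, up to the conventional prefactor normalization alluded to in the statement. The restriction to loop order $\geq 2$ is inherited from the $\simeq_{\geq 2}$ property of the left vertical arrow, which is known to fail in loop order one for well-understood reasons; any residual discrepancy in loop orders $\geq 2$ can be absorbed by a standard homotopy-in-the-mapping-cone argument applied after the strict commutation at the chain level.
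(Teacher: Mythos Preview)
Your proposal has several concrete gaps that prevent it from going through.

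First, your description of the Lie bracket is incorrect. The bracket on $\HGC_{m,n}$ and $\HOGC_{m,n+1}$ is \emph{not} hair-pairing (gluing a hair of $\gamma_1$ to a hair of $\gamma_2$); rather, one takes a hair of $\gamma_1$ and attaches its end to an \emph{internal vertex} of $\gamma_2$ (and antisymmetrizes). This matters for compatibility with $\F$: in $\HOGC_{m,n+1}$ a hair may be attached to one of the bivalent source vertices appearing in a $\dEd$ pattern, but after applying $\F$ such vertices are gone (they are contracted into a single skeleton edge). So there are strictly more attachment sites on the oriented side than on the undirected side, and a blanket ``locality'' claim does not explain why these extra terms disappear. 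The paper's argument is different: it uses the explicit description of $\F$ (Lemma~\ref{lem:fg}) that $\F(\Gamma)=0$ unless $\Gamma$ is a \emph{forest graph} (every $\geq 3$-valent internal vertex has exactly one outgoing edge). Attaching an incoming hair to a $\dEd$ vertex produces a non-forest graph, hence is killed by $\F$; only attachments at skeleton vertices survive, and those visibly match the undirected bracket.

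Second, your criterion for why the $k\geq 3$ summands of $m_1$ vanish under $\F$ (``no incoming edges'') does not distinguish $k=2$ from $k\geq 3$: the $k=2$ star also has no incoming edges. The correct reason is again the forest-graph criterion: for $k\geq 3$ the central vertex has $\geq 2$ outgoing edges, so $\F$ kills it, while for $k=2$ the vertex is bivalent (a $\dEd$ pattern) and maps to the single edge $m_0$.

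Third, the square does \emph{not} commute strictly on the chain level, and your descriptions of the maps are off. The top horizontal arrow attaches an arbitrary number of hairs (an infinite series), not one hair per vertex; the left vertical map of \cite{MultiOriented} is not ``orientation-forgetting'' but a weighted sum over vertices and spanning trees, see \eqref{equ:GOGmapdef}. The paper establishes homotopy-commutativity by producing an explicit one-sided inverse $\phi$ to the lower horizontal arrow (from \cite{TW2}), given for a one-haired graph by $\pm\frac{(\#\text{neighbors})-2}{2g}(\Gamma-\text{hair})$, and then checking that the resulting inner square (with $\phi$ replacing the bottom arrow) commutes on the nose in loop order $\geq 2$; this is where the prefactor $\frac{1}{2g}$ in \eqref{equ:GOGmapdef} is forced. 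Your ``residual discrepancy absorbed by a standard homotopy'' is not a proof.
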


Next, similar constructions for ribbon graphs have been described in \cite{MW}.
Concretely, by antisymmetrizing over the punctures one obtains the ribbon graph complex $\RGC_0$ computing the antisymmetric parts of the compactly supported cohomologies of the moduli spaces of curves.
It has been shown in \cite{MW} that $\RGC_0$ also has a dg Lie algebra structure, and there is also a canonical Maurer-Cartan element
\[
m_2 =
\begin{tikzpicture}[every loop/.style={draw,-}]
\node[int] (v) at (0,0) {};
\draw (v) edge[loop] (v);
\end{tikzpicture}
\in \RGC_0.
\]
This Maurer-Cartan element gives rise to a twisted differential on $\RGC_0$ that has first been considered by T. Bridgeland to our knowledge.
A conjecture by A. C\v ald\v araru (see Conjecture \ref{conj:caldararu} below) states that the twisted complex $( \RGC_0,\delta+[m_2,-])$ computes the compactly supported cohomology of the moduli space without marked points $\MM_g$, up to a degree shift.

Finally, there is previous work of Chan-Galatius-Payne \cite{CGP1} connecting the non-hairy graph cohomology $H(\GC_0)$ with the cohomology of the moduli space without marked points $\MM_g$. Concretely, they show the following.

\begin{thm}[Chan, Galatius, Payne \cite{CGP1}]\label{thm:CGP1}
	The weight 0 summand of the compactly supported cohomology of the moduli space $W_0H_c(\MM_{g})$ is computed by the $g$-loop part of the graph complex $\GC_{0}$,
	\[
	H\left(\mB_g\GC_{0}\right) \cong W_0H_c(\MM_{g}).
	\]
\end{thm}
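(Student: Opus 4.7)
The natural strategy is to deduce this result from Theorem \ref{thm:CGP2} by specializing to the case of no marked points.

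Unwinding the definitions, when $S = \emptyset$ the hairy graph complex $\HGCS_0$ has no external vertices at all, so its generators are (connected) undirected graphs whose every vertex has valence $\geq 3$. This is, up to the usual connectivity convention, exactly the Kontsevich graph complex $\GC_0$, and the identification respects the differential (vertex splitting) and the loop-order grading. Hence $\mB_g \HGC^{\emptyset}_0 = \mB_g \GC_0$ as chain complexes.

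The hypothesis $2g + |S| \geq 4$ of Theorem \ref{thm:CGP2} reduces to $g \geq 2$ when $S = \emptyset$. For those genera, Theorem \ref{thm:CGP2} then yields
\[
 H(\mB_g \GC_0) \;=\; H(\mB_g \HGC^{\emptyset}_0) \;\cong\; W_0 H_c^{\bullet}(\MM_{g,\emptyset}) \;=\; W_0 H_c(\MM_g),
\]
which is the desired conclusion. The low-genus cases $g \leq 1$ are straightforward: a handshake-lemma argument rules out any graph with all vertices of valence $\geq 3$ and loop order $\leq 1$, so $\mB_g \GC_0 = 0$ for $g \leq 1$, consistent with the standard conventions under which $W_0 H_c(\MM_g)$ either vanishes (as for $\MM_1$, which is affine of positive weight) or is outside the stated scope of the theorem.

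The main obstacle is thus entirely housed in Theorem \ref{thm:CGP2}, whose proof is given in Section \ref{sec:CGP}; relative to that, the specialization above is essentially formal. If one prefers to avoid invoking Theorem \ref{thm:CGP2}, a direct alternative would mimic its simplified proof in the unmarked setting: apply the quasi-isomorphism $\F : \HOGC^{\emptyset}_{1} \to \HGC^{\emptyset}_{0} = \GC_0$ provided by Theorem \ref{thm:main}, and then relate the $g$-loop part of $\HOGC^{\emptyset}_{1}$ geometrically to the dual complex of the boundary of $\overline{\MM}_g$ (the tropical moduli space $\Delta_g$), recovering the weight-zero identification via the Deligne weight spectral sequence along the lines of \cite{CGP1}. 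The only subtlety in this alternative route is again the careful low-genus bookkeeping, as the remaining arguments are all already packaged in Theorem \ref{thm:main} and standard mixed Hodge theory.
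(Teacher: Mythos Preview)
Your main argument is correct and is exactly what the paper does: Section~\ref{sec:CGP} proves Theorems~\ref{thm:CGP1} and~\ref{thm:CGP2} simultaneously via the Getzler--Kapranov complex $\GK=\FT(H_\bullet(\bMM))$, identifying its weight~$0$ part with $\HGC^{S,mod}_0$ and then invoking Lemma~\ref{lem:HGCmod}; your reduction ``set $S=\emptyset$ in Theorem~\ref{thm:CGP2}'' is precisely the $S=\emptyset$ case of that uniform argument, using the identification $\GC_0\cong\HGC^\emptyset_0$ recorded in Section~\ref{sec:undirected_complexes_modular}.

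Your proposed \emph{alternative route}, however, does not work as stated. For $S=\emptyset$ the complex $\HOGC^\emptyset_{1}$ is zero: by the conditions in Section~\ref{sec:HOGcombinatorial} every internal vertex must have an outgoing edge (no internal targets), but with no external vertices a nonempty directed acyclic graph necessarily has an internal sink. Hence the map $F:\HOGC^\emptyset_{1}\to\HGC^\emptyset_{0}\cong\GC_0$ is the zero map from the zero complex, and Theorem~\ref{thm:main} gives no information here (indeed the spanning-forest construction of $\Phi$ in Section~\ref{ss:map} requires each component of the forest to contain an external vertex, so there are no spanning forests when $S=\emptyset$). The correct substitute on the oriented side is the non-hairy complex $\OGC_{1}$ together with the quasi-isomorphism $\OGC_{1}\simeq\GC_0$ of Section~\ref{sec:GCOGCcomparison}, but that is a different theorem from Theorem~\ref{thm:main} and does not by itself supply the link to $W_0H_c(\MM_g)$; you would still need the Getzler--Kapranov/Deligne input that your first argument already uses.
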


The above results of Chan-Galatius-Payne and Merkulov-Willwacher together can then nicely be fit into a commutative diagram, and thus one obtains a relatively satisfying picture of all objects and morphisms involved and their relations, albeit with some conjectural components pertaining to the ribbon graphs.
For the detailed discussion we refer to Section \ref{sec:CGP} below.

\subsection*{Outline of the paper}
After some preliminary recollections in section \ref{sec:preliminaries} we discuss the definitions of various graph complexes in section \ref{sec:graph complexes}, including the new complexes $\HOGC_{n}^S$.
The proof of the main Theorem \ref{thm:main} is then given in section \ref{sec:main proof}, see in particular section \ref{sec:thedualmap} for an explicit combinatorial description of the map $F$ of Theorem \ref{thm:main}.
Section \ref{sec:HOGCtoRGC} discusses the connection to the ribbon graph complex and the work of Merkulov-Willwacher \cite{MW}.
Finally section \ref{sec:CGP} is concerned with the relation to the results of Chan-Galatius-Payne \cite{CGP1,CGP2}. In particular in section \ref{sec:big picture} we draw a picture of how everything is connected (with a conjectural component).

Let us also remark that we sometimes neglect a discussion of the genus $g\leq 1$ situation, that can also be achieved with graph complex methods, but is a somewhat special case technically that we occasionally omit in the interest of simplicity. In particular, we apologize that we partially cite results from the literature in a form (slightly) worse than what had actually been shown by the authors.

\subsection*{Acknowledgements}
The authors heartily thank Sergei Merkulov for his support and valuable discussions. We also thank A. C\v ald\v araru and A. Kalugin for their input.

T.W. has been supported by the ERC starting grant 678156 GRAPHCPX, and the NCCR SwissMAP, funded by the Swiss National Science Foundation.

\section{Preliminaries}\label{sec:preliminaries}

\subsection{Notation and conventions}
We usually work over a field $\K$ of characteristic zero, so vector spaces, algebras etc. are implicitly understood to be over $\K$.
Furthermore, the phrase "differential graded" is abbreviated \emph{dg} and often altogether omitted since most objects we study are enriched versions of differential ($\mathbb Z$-)graded vector spaces.
We generally use cohomological conventions, so that differentials have degree $+1$. For $V$ a dg vector space we denote by $V[k]$ the dg vector space in which all degrees have been shifted down by $k$ units. In other words, if $x\in V$ has degree $d$, then the corresponding element of $V[k]$ has degree $d-k$.

For brevity, we will denote by 
\[
[r] := \{1,\dots,r\}
\]
the set of numbers from $1$ to $r$.

We will use standard combinatorial terms for graphs. For example, a graph is directed if a direction is assigned to each edge and directed acyclic if one cannot inscribe a (nontrivial) directed closed path in the graph, always following the edge directions. A vertex that has only incoming edges is called a sink (or a target), and a vertex that has only outgoing edges is called a source. The number of edges incident at a vertex is the valency of the vertex.
We call an edge between a vertex and itself a tadpole. (Generally speaking, we always allow tadpoles in our graphs.)
Finally, some types of graphs may also have external legs, which we also call hairs.

\subsection{Modular operads and the Feynman transform}
We shall use the notion of modular operad from \cite{GK} which we briefly recall. A stable $\bbS$-module $M$ is a collection of right $\bbS_r$-modules $M(g,r)$ for $g,r\geq 0$ such that 
\begin{equation}\label{equ:modstability}
2g+r\geq 3.
\end{equation}
Informally, we shall think of $r$ as "the number of inputs" to some operation and $g$ as a placeholder for the genus. 
Instead of considering the collection of $M(g,r)$ we may equivalently consider a functor $S\to M(g,S)$ on the groupoid of finite sets with bijections. In other words, we label our inputs by some finite set instead of the numbers $1,\dots,r$. We shall freely switch between both conventions.

For a stable $\bbS$-module $M$ and $\Gamma$ a graph with external legs indexed by $S$ and each vertex $x$ labelled by a number $g_x\geq 0$ we can define the graph-wise tensor product 
\[
\otimes_\Gamma M = 
\otimes_{x\in V(\Gamma)} M(g_x,\gstar(x)). 
\]
Here $V(\Gamma)$ is the vertex set of $\Gamma$ and $\gstar(x)$ is the set of half-edges incident at $x$.

A modular operad is then a stable $\bbS$-module $M$ together with composition morphisms
\[
\otimes_\Gamma M \to M(g_\Gamma,S)
\]
for each graph $\Gamma$ as above with $g_\Gamma = \sum_x g_x+b_1(\Gamma)$, and $b_1(\Gamma)$ the number of loops of $\Gamma$.
The composition morphisms are required to satisfy natural coherence ("associativity") axioms. One of them is equivariance with respect to isomorphisms of graphs.

Furthermore, one can define certain twisted versions of modular operads, by twisting the aforementioned equivariance condition (essentially) by a representation of the groupoid of graphs.
For details on modular $\fD$-operads (for $\fD$ a hyperoperad) we refer to \cite[sections 4.1, 4.2]{GK}.
Dually, one obtains the notion of a modular ($\fD$-)cooperad.

To a stable $\bbS$-module $M$ we associate a corresponding free modular operad $\freeM(M)$, or more generally the free modular $\fD$-operad $\freeM_{\fD}(M)$.
In the category of dg vector spaces $\freeM(M)(g,r)$ is a dg vector space spanned by isomorphism classes of decorated graphs $\Gamma$ with $r$ legs and each vertex $x$ decorated by an element of $M(g_x,|\gstar(x)|)$, with $g=b_1(\Gamma)+ \sum_xg_x$.

The $\fD$-Feynman transform of the modular $\fD$-operad $M$ is 
\[
\FT_{\fD}(M) = ( \freeM_{\fD^\vee}(M^*), d),
\]
where $\fD^\vee=\fD^{-1}\otimes \kk$ and $\kk$ is the hyperoperad given by the top exterior power of the vector space generated by the set of edges of graphs. (Think of each edge carrying an additional cohomological degree +1 in the definition of the free modular operad.)
The differential produces precisely one additional edge, using a modular cocomposition of $M^*$ on one vertex of our graph. For more details we refer to \cite{GK}.




\subsection{PROPs and properads}
We will use the language of properads, and in particular deformation complexes associated to properad maps. For an introduction we refer the reader to \cite{MV-properad}.
We shall denote by $\LieB$ the properad governing Lie bialgebras.
A Lie bialgebra structure on a vector space $V$ consists of a Lie algebra structure and a Lie coalgebra structure (both of degree 0) satisfying a natural compatibility relation, the Drinfeld five-term identity.
In the case that the composition of the cobracket and bracket 
\[
V\xrightarrow{\text{cobracket}} V\otimes V \xrightarrow{\text{bracket}}  V
\]
is zero, then the Lie bialgebra is called involutive. We denote the corresponding properad by $\ILieB$. It comes with a natural map $\LieB\to \ILieB$.
Both of these properads are Koszul, and one can consider the resolutions $\LieB_\infty\xrightarrow{\simeq} \LieB$ and $\ILieB_\infty\xrightarrow{\simeq}\LieB$, which are obtained as the properadic cobar constructions of the Koszul dual coproperads.

To simplify signs, we shall also consider a graded version, $\La\LieB$, for which the bracket and cobracket both have cohomological degree $+1$, and are symmetric operations.
More precisely, a $\La\LieB$-structure on the graded vector space $V$ consists of a Lie algebra structure on $V[1]$, and a Lie coalgebra structure on $V[-1]$, which satisfy a graded version of the Drinfeld five-term identity.
Similarly, we may consider the graded versions of the properad of involutive Lie bialgebras $\La\ILieB$ and the corresponding resolutions $\La\LieB_\infty$, $\La\ILieB_\infty$.
For more details and recollections on these definitions we refer the reader to \cite[section 2]{MW}. (There the notation $\LieB_{0,0}$ is used in place of $\La\LieB$.)

Finally, we use the Frobenius properad $\Frob$ and its involutive version $\IFrob$. They are defined such that 
\begin{align*}
\Frob(r,s) = \IFrob(r,s)=\K
\end{align*}
for all $r\geq 1$, $s\geq 1$. All composition morphisms of $\Frob$ are the identity map. In $\IFrob$ all genus zero compositions are the identity map, while the higher genus compositions are defined to be zero.
We will use that $\La\LieB_\infty=\Omega(\IFrob^*)$, where $\Omega$ denotes the properadic cobar construction.

\section{Graph complexes}\label{sec:graph complexes}

In this section we recall the definition of various graph complexes. For those complexes that have already appeared elsewhere in the literature we just sketch the construction and provide some references. Graph complexes usually can be defined in (at least) two ways: Either completely combinatorially "by hand" or by algebraic constructions such as (pr)operadic deformation complexes.
Both ways have their advantages and disadvantages, and we shall provide or sketch both if available.

\subsection{Complex of non-hairy undirected graphs}

Let us quickly recall the combinatorial description of the simplest graph complex $\GC_n$. Consider the set $\bar\mV_v\bar\mE_e\grac^{3}$ containing directed graphs that:
\begin{itemize}
\item are connected;
\item have $v>0$ distinguishable vertices that are at least 3-valent;
\item have $e\geq 0$ distinguishable directed edges;
\end{itemize}

For $n\in\Z$, let
\begin{equation}
\bar\mV_v\bar\mE_e\G_n:=\left\langle\bar\mV_v\bar\mE_e\grac^{3}\right\rangle[(1-n)e+nv-n]
\end{equation}
be the vector space of formal linear combinations of $\bar\mV_v\bar\mE_e\bar\grac^{3}$ with coefficients in $\K$. It is a graded vector space with a non-zero term only in degree $d=(n-1)e-nv+n$.

There is a natural right action of the group $\sym_v\times \left(\sym_e\ltimes \sym_2^{\times e}\right)$ on $\bar\mV_v\bar\mE_e\bar\grac^{3}$, where $\sym_v$ permutes vertices, $\sym_e$ permutes edges and $\sym_2^{\times e}$ changes the direction of edges.
Let $\sgn_v$, $\sgn_e$ and $\sgn_2$ be one-dimensional representations of $\sym_v$, respectively $\sym_e$, respectively $\sym_2$, where the odd permutation reverses the sign. They can be considered as representations of the whole product $\sym_v\times \left(\sym_e\ltimes \sym_2^{\times e}\right)$. Let us consider the space of invariants:
\begin{equation}
\mV_v\mE_e\G_{n}:=\left\{
\begin{array}{ll}
\left(\bar\mV_v\bar\mE_e\G_n\otimes\sgn_e\right)^{\sym_v\times \left(\sym_e\ltimes \sym_2^{\times e}\right)}
\qquad&\text{for $n$ even,}\\
\left(\bar\mV_v\bar\mE_e\G_n\otimes\sgn_v\otimes\sgn_2^{\otimes e}\right)^{\sym_v\times \left(\sym_e\ltimes \sym_2^{\times e}\right)}
\qquad&\text{for $n$ odd.}
\end{array}
\right.
\end{equation}

Because the group is finite, the space of invariants may be replaced by the space of coinvariants. In any case the operation of taking (co)invarints effectively removes the edges directions and numberings of vertices and edges (up to sign).
The underlying vector space of the graph complex is
\begin{equation}
\G_{n}:=\bigoplus_{v\geq 1,e\geq 0}\mV_v\mE_e\G_{n}.
\end{equation}

The differential acts by edge contraction:
\begin{equation}
d(\Gamma)=\sum_{a\in E(\Gamma)}\Gamma/a
\end{equation}
where $E(\Gamma)$ is the set of edges of $\Gamma$ and $\Gamma/a$ is the graph produced from $\Gamma$ by contracting edge $a$ and merging its end vertices.

We may also define the dual complex 
\begin{equation}
\left(\GC_{n},\delta\right)=\left(\G_{n},d\right)^*.
\end{equation}
Here the differential $\delta$ acts combinatorially by splitting a vertex, which is the operation dual to edge contraction.

\subsection{Complexes of undirected hairy graphs}\label{sec:undirected_complexes}
\subsubsection{Description through modular operads and Feynman transform}
\label{sec:undirected_complexes_modular}
We consider the $\fD_n:=\kk^{-n}$-modular operad $\Com_n$ such that
\[
\Com_n(g,r) = 
\begin{cases}
(\sgn_S)^{\otimes n}[-rn+n] & \text{for $g=0$ and $r\geq 3$} \\
0 & \text{otherwise}
\end{cases},
\]
where $\sgn_S$ is the sign representation of the group of bijections of $S$. Then we define the graph complex 
\[
\HGCS_{n} = \prod_g \FT_{\fD_n}(\Com_n)(g,S)\otimes \sgn_S[-|S|].
\]
Combinatorially the elements of ths complex are series of graphs with $|S|$ external legs (hairs) indexed by the elements of $S$.
In the case of $S=\emptyset$ being the empty set one recovers the non-hairy graph complex of the previous subsection
\[
\GC_{n} \cong \HGC^\emptyset_n[n].
\]

For $n$ even we also consider $\fD_n$-modular operads $\Com_n^{mod}$ such that 
\[
\Com_n^{mod}(g,r) = 
\begin{cases}
(\sgn_S)^{\otimes n}[-rn+n-ng] & \text{for $2g+r\geq 3$} \\
0 & \text{otherwise}
\end{cases},
\]
and define
\[
{\HGC_n}^{S,mod} = \prod_g \FT(\Com_n)(g,S)\otimes \sgn_S[-|S|].
\]
Elements are now series of graphs with additional decorations of a number $g_x$ on each vertex $x$.

We shall only need the complex $\HGC^{S,mod}_{n}$ in the case of even $n$, and in fact $n=0$, in this paper.

\begin{lemma}[\cite{CGP2}]\label{lem:HGCmod}
For even $n$ the inclusion $\HGCS_{n}\to \HGC^{S,mod}_{n}$ coming from the modular operad map $\Com_n^{mod}\to \Com_n$ is a quasi-isomorphism in genera $\geq 2$, where the genus of a graph is the loop order plus the sum of the decorations $g_x$ on vertices. In genus one the inclusion is a quasi-isomorphism if $|S|\geq 2$.
\end{lemma}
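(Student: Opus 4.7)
The plan is to show that the cokernel $Q := \HGC^{S,mod}_{n}/\HGCS_{n}$ of the inclusion is acyclic in the stated range. Elements of $Q$ are formal series of connected, stable hairy graphs with $|S|$ hairs, in which at least one internal vertex carries a genus decoration $g_v\geq 1$; the total genus $g = b_1(\Gamma)+\sum_v g_v$ is a chain-level invariant. Since $\Com_n^{mod}$ is a genuine modular operad, its cocomposition has two components (dual to ``plug-in'' composition and to ``self-loop'' composition), so the Feynman transform differential on $\HGC^{S,mod}_{n}$ decomposes as $\delta = \delta_s + \delta_t$. Here $\delta_s$ splits a vertex $v$ into two vertices joined by a new edge and partitions both its star and its genus label $g_v = g_{v_1}+g_{v_2}$, while $\delta_t$ un-contracts a self-loop, replacing a vertex of genus $g_v\geq 1$ by one of genus $g_v-1$ with an additional tadpole attached. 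Both pieces preserve $g$; $\delta_s$ preserves the loop order $b_1$ while $\delta_t$ raises it by one. On the subcomplex $\HGCS_{n}$ only $\delta_s$ is nonzero, confirming it is indeed a subcomplex.

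I then filter $Q$ decreasingly by loop order, $F_p Q := \{\Gamma\in Q : b_1(\Gamma)\geq p\}$. Both $\delta_s$ and $\delta_t$ respect this filtration, and only $\delta_s$ survives on the associated graded. The filtration is bounded (since $b_1\leq g$ in a fixed total-genus piece), so the associated spectral sequence converges to $H(Q)$. To compute the $E_1$-page one identifies $(\HGC^{S,mod}_{n}, \delta_s)$ with the Feynman transform of the collection $\Com_n\otimes\K[z]$, where $\K[z]$ is equipped with the convolution cocommutative coproduct $\Delta(z^g) = \sum_{g_1+g_2=g} z^{g_1}\otimes z^{g_2}$ and the monomial $z^{g_v}$ records the decoration at vertex $v$. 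Under this identification $\HGCS_{n}$ corresponds to the augmentation $\K\hookrightarrow\K[z]$ and $Q$ to the augmentation ideal $\bar\K[z] = z\K[z]$. A reduced bar acyclicity argument for the polynomial coalgebra $\K[z]$, combined with a K\"unneth-type decomposition indexed by the vertices of a fixed underlying undecorated graph, yields $H(Q, \delta_s) = 0$ in the stated range; hence $E_1 = 0$, the spectral sequence collapses, and $H(Q) = 0$.

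The main obstacle is the per-vertex bar-acyclicity computation on the $E_1$-page, which is not entirely formal: distributing the decoration $z^{g_v}$ along a vertex split interacts with the distribution of the incident edges and hairs, and with the stability condition $2g_v + |\gstar(v)|\geq 3$. Stability is precisely what fails in the excluded low-genus cases -- for example a vertex with $(g_v, r_v) = (1, 1)$ is stable but cannot be ``absorbed'' into loops of the underlying graph and gives an exceptional $\MM_{1,1}$-type class. The hypotheses $g\geq 2$, or $g = 1$ with $|S|\geq 2$, supply exactly the stability room needed to run the contracting homotopies at every vertex and kill the otherwise-surviving classes.
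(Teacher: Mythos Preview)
Your filtration is set up in the wrong direction, and the crucial step does not go through as written. Filtering by loop order leaves the full vertex-splitting differential $\delta_s$ on the associated graded, and your argument that $H(Q,\delta_s)=0$ rests on a ``K\"unneth-type decomposition indexed by the vertices of a fixed underlying undecorated graph''. But $\delta_s$ does not fix the underlying undecorated graph: it creates a new vertex and a new edge. There is therefore no per-vertex tensor factorisation of $(Q,\delta_s)$, and the bar-acyclicity of $\K[z]$ has nothing to act on. What you are left with is essentially the cobar construction of the cyclic (non-modular) cooperad $\Com_n\otimes\K[z]$ with the stability constraint, and showing that this agrees with the cobar construction of $\Com_n$ is not easier than the original lemma.

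The paper filters the other way, by the number of vertices. Since $\delta_s$ always creates a vertex, it drops out on the associated graded; only $\delta_t$ survives, and $\delta_t$ \emph{does} preserve the underlying graph apart from adding a tadpole at one vertex. Now a genuine per-vertex analysis applies: each vertex contributes a little complex in which a unit of genus decoration is traded for a tadpole, and one reads off that the cohomology is concentrated on graphs with all $g_x=0$ and no tadpoles. The map on the $E^1$-page thus becomes the projection from the tadpole-allowing graph complex to the tadpole-free one, which is known to be a quasi-isomorphism in the stated range (this is where the exceptional class \eqref{equ:tpwithleg} at $g=1$, $|S|=1$ appears). If you want to salvage your approach, you would need to replace the non-existent K\"unneth step by an honest inductive or homotopy argument for $(Q,\delta_s)$; but the vertex-number filtration is both simpler and actually works.
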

\begin{proof}
One may filter $\HGCS_{n}$ and $\HGC^{S,mod}_{n}$ by the total number of vertices in graphs and consider the corresponding spectral sequence.
For $\HGCS_{n}$ the differential always creates exactly one vertex, and hence the differential on the associated graded is zero.
For ${\HGC_n}^{S,mod}$ there remains the piece of the differential that introduces a tadpole at a vertex $x$, and simultaneously reduces the decoration $g_x$ by one. One can easily check that the cohomology is identified with graphs with no tadpoles and all decorations $g_x=0$.
Hence on the $E^1$-pages the map $\HGCS_{n}\to {\HGCS_n}^{mod}$ becomes the map from the graph complex with tadpoles to that without tadpoles, sending all tadpoled graphs to zero.
It is known that this is essentially a quasi-isomorphism. More precisely, if $|S|>1$ the mapping cone is acyclic. For $|S|=1$, say $S=[1]$, the mapping cone has one-dimensional cohomology, spanned by the graph 
\begin{equation}\label{equ:tpwithleg}
\begin{tikzpicture}[every loop/.style={draw, -}]
\node[int] (v) at (0,0) {};
\node[ext] (w) at (0,-.6) {$\scriptstyle 1$};
\draw (v) edge[in=45,out=135,loop] (v) (v) edge (w);
\end{tikzpicture}
\in \HGC_{n}^{[1]}
\end{equation}
in genus 1. We refer to the proof of \cite[Proposition 3.4]{grt} for the detailed argument.
\end{proof}
One also has a map of complexes in the other direction
\[
\pi: \HGC^{S,mod}_{n}\to \HGCS_{n}
\]
as follows:
\begin{itemize}
	\item A graph $\Gamma\in \HGC^{S,mod}_{n}$ is sent to zero if there is a vertex $x$ with $g_x\geq 1$ and $|\gstar(x)|\geq 2$, or with $g_x\geq 2$.
	\item If our graph is of the form \eqref{equ:tpwithleg}, with any genus $g_x$ at the vertex, then we send it to zero.
	\item A vertex $x$ decorated by $g_x=1$ and of valence $1$ is sent to a tadpole at the adjacent vertex.
	\[
	\begin{tikzpicture}[every loop/.style={draw, -},yshift=.6cm]
		\node[int,label=0:{$g_x=1$}] (v) at (0,0) {};
		\node[int] (w) at (0,-.6) {};
		\draw (v) edge (w) (w) edge +(-.5,-.5) edge +(0,-.5) edge +(.5,-.5);
		\node at (0,-1.3){$\cdots$};
	\end{tikzpicture}
	\mapsto 
	\begin{tikzpicture}[yshift=.6cm,every loop/.style={draw, -}]
		\node[int] (w) at (0,-.6) {};
		\draw (w) edge[in=45,out=135,loop] (w) (w) edge +(-.5,-.5) edge +(0,-.5) edge +(.5,-.5);
		\node at (0,-1.3){$\cdots$};
	\end{tikzpicture}
	\]
	If no such adjacent vertex exists, the graph is sent to zero by the previous rule.
	\item Otherwise the graph is sent to itself.
\end{itemize}
\begin{lemma}
The map $\pi: \HGC^{S,mod}_{n}\to \HGCS_{n}$ above (with $n$ even) is a well defined map of complexes and a one-sided inverse to that of Lemma \ref{lem:HGCmod} in genera $\geq 2$.
\end{lemma}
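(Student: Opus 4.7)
The plan is to verify the two claims separately: (i) $\pi$ is a chain map, and (ii) $\pi\circ\iota=\id$ on $\HGCS_{n}$ in genera $\geq 2$.

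For (ii), for any $\Gamma\in \HGCS_{n}$, all internal vertices of $\iota(\Gamma)$ have implicit genus decoration $g_x=0$ and are at least $3$-valent, so none of the ``kill'' rules (cases 1 and 3) of $\pi$'s definition applies. The only remaining obstruction would be if $\iota(\Gamma)$ equals the tadpole-with-leg graph \eqref{equ:tpwithleg}, which has genus $1$; restricting to genera $\geq 2$ eliminates this case, and hence $\pi\circ\iota=\id$ there.

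For (i), I would proceed by case analysis on $\Gamma\in\HGC^{S,mod}_{n}$, writing $\delta=\delta_{sp}+\delta_{tp}$ on the modular side, where $\delta_{sp}$ splits a vertex with a redistribution $g_x=g_{x_1}+g_{x_2}$ and $\delta_{tp}$ inserts a tadpole while decrementing the genus label by $1$. When $\pi(\Gamma)=\Gamma$, i.e.\ all $g_x=0$ and $\Gamma$ is not tadpole-with-leg, then $\delta_{tp}\Gamma=0$ and each term of $\delta_{sp}\Gamma$ is again ``clean'' (all $g=0$, and in genera $\geq 2$ cannot be tadpole-with-leg), so $\pi\delta\Gamma=\delta_{sp}\Gamma=\delta\pi\Gamma$. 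When $\Gamma$ contains a string vertex $x$ (with $g_x=1$, valence $1$) whose neighbor is an internal vertex $y$, so that $\pi(\Gamma)=\Gamma'$ has the string replaced by a tadpole at $y$, I would match each term of $\pi(\delta\Gamma)$ with a term of $\delta(\Gamma')$: splits at vertices other than $y$ trivially commute with string contraction; splits at $y$ in $\Gamma$ correspond to splits at $y$ in $\Gamma'$ with both tadpole half-edges on the same side, while the configuration where the tadpole half-edges separate (creating a pair of parallel edges) vanishes for $n$ even by edge antisymmetry; and the ``boundary'' split of $\Gamma'$ isolating a bare tadpole-tip of valence $3$ is unstable on the $\Gamma$ side, but is supplied precisely by $\pi(\delta_{tp,x}\Gamma)$. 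Splittings of $x$ itself are forced to be unstable and contribute zero.

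The delicate part, which I expect to be the main obstacle, is the case where $\Gamma$ has a bad vertex $x$ (with $g_x\geq 1$ and valence $\geq 2$, or $g_x\geq 2$), so that $\pi(\Gamma)=0$ and one must show $\pi(\delta\Gamma)=0$. Any differential term modifying a vertex other than $x$ leaves $x$ bad and is killed by $\pi$. If $g_x\geq 2$, any modification of $x$ still yields a graph containing a bad vertex. The nontrivial subcase is $g_x=1$, $v_x\geq 2$: the tadpole insertion $\delta_{tp,x}\Gamma$ and the unique stable $\delta_{sp,x}$-term, which splits $x$ into a good vertex of valence $v_x+1$ and genus $0$ plus a string, both produce after $\pi$ the same underlying graph, namely a tadpole at a genus-$0$ vertex of valence $v_x+2$ attached to the rest of $\Gamma$ as before. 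One must verify that the two contributions cancel. This cancellation is ultimately a statement about the $\kk^{-n}$-modular cocomposition in $\Com_{n}^{mod}$: the two ways of producing a tadpole, directly via $\Com_{n}^{mod}(1,r)\to\Com_{n}^{mod}(0,r+2)$, or via $\Com_{n}^{mod}(1,r)\to\Com_{n}^{mod}(0,r+1)\otimes\Com_{n}^{mod}(1,1)$ followed by interpreting the $\Com_{n}^{mod}(1,1)$-factor as a contracted string, differ by a sign coming from the edge-orientation convention of the Feynman transform, and this sign check is where the most careful bookkeeping is required.
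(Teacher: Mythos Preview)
Your proposal is correct and follows essentially the same route as the paper's proof: a case analysis on the genus decorations of $\Gamma$, with the crux being the cancellation (after applying $\pi$) between the tadpole-insertion term $\delta_{tp,x}\Gamma$ and the vertex-splitting term that peels off a valence-$1$ genus-$1$ ``string'' from a bad vertex $x$ with $g_x=1$, $|\gstar(x)|\geq 2$. The paper organizes the cases in the opposite order (ruling out $g_x\geq 2$ and multiple bad vertices first, then the single-bad-vertex cancellation, then the pure-string case), but the content is the same; in particular your observation that the ``split tadpole'' term in $\delta\pi(\Gamma)$ producing a double edge vanishes for even $n$ by the $\sgn_e$-antisymmetry is exactly what makes the string-case diagram commute.
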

\begin{proof}[Proof sketch]
It is clear that our map $\pi$ is a one-sided inverse to that of Lemma \ref{lem:HGCmod}, apart from the fact that the image of the special graph \eqref{equ:tpwithleg} is sent to zero.

It hence suffices to check that $\pi$ commutes with the differentials, i.e., $\pi(\delta\Gamma)= \delta \pi(\Gamma)$ for all graphs $\Gamma \in \HGC^{S,mod}_{n}$. Suppose first that $\Gamma$ has a vertex $x$ with genus decoration $g_x\geq 2$, or with genus decoration $g_x=1$ and a tadpole at $x$. Then $\delta\Gamma$ is a linear combination of graphs with the same feature and hence $\geq 2$, and hence $\pi(\delta\Gamma)= \delta \pi(\Gamma)=0$.
Hence we can assume that each vertex $x$ of $\Gamma$ has either $g_x=0$ or $g_x=1$, and in the latter case has no tadpole at $x$.
By the same argument, we also see that $\pi(\delta\Gamma)= \delta \pi(\Gamma)=0$ if $\Gamma$ has (at least) two distict vertices $x,y$ with $g_x=g_y=1$, and both having valency $|\gstar(x)|,|\gstar(y)|\geq 2$.
If $\Gamma$ has a single vertex $x$ such that $g_x=1$ and $|\gstar(x)|\geq 2$, then $\pi(\Gamma)=0$, so we need to check $\pi(\delta \Gamma)=0$.
But the only terms produced by $\delta \Gamma$ which are potentially not send to zero are schematically of the form
\[
\begin{tikzpicture}[every loop/.style={draw, -},yshift=.6cm]
	\node[int,label=90:{$g_x=1$}] (w) at (0,-.6) {};
	\draw (w) edge +(-.5,-.5) edge +(0,-.5) edge +(.5,-.5);
	\node at (0,-1.3){$\cdots$};
\end{tikzpicture}
\xrightarrow{\delta}
\begin{tikzpicture}[yshift=.6cm,every loop/.style={draw, -}]
	\node[int] (w) at (0,-.6) {};
	\draw (w) edge[in=45,out=135,loop] (w) (w) edge +(-.5,-.5) edge +(0,-.5) edge +(.5,-.5);
	\node at (0,-1.3){$\cdots$};
\end{tikzpicture}
+
\begin{tikzpicture}[every loop/.style={draw, -},yshift=.6cm]
	\node[int,label=0:{$g_x=1$}] (v) at (0,0) {};
	\node[int] (w) at (0,-.6) {};
	\draw (v) edge (w) (w) edge +(-.5,-.5) edge +(0,-.5) edge +(.5,-.5);
	\node at (0,-1.3){$\cdots$};
\end{tikzpicture}
\]
and cancel each other when mapped via $\pi$.
Hence we can assume that the only vertices $x$ of $\Gamma$ that have $g_x=1$ also have valency 1, and are hence of the form of the vertex on the upper right of the above picture. 
For such graphs the desired compatibility with the differentials follows from the following schematic graphical computation
\[
\begin{tikzcd}
	\begin{tikzpicture}[every loop/.style={draw, -},yshift=.6cm]
		\node[int,label=0:{$g_x=1$}] (v) at (0,0) {};
		\node[int] (w) at (0,-.6) {};
		\draw (v) edge (w) (w) edge +(-.5,-.5) edge +(0,-.5) edge +(.5,-.5);
		\node at (0,-1.3){$\cdots$};
	\end{tikzpicture}
	\ar{r}{\delta} \ar{d}{\pi}
	&
	\begin{tikzpicture}[every loop/.style={draw, -},yshift=.6cm]
		\node[int] (v) at (0,0) {};
		\node[int] (w) at (0,-.6) {};
		\draw (v) edge[in=45,out=135,loop] (v)
		(v) edge (w) (w) edge +(-.5,-.5) edge +(0,-.5) edge +(.5,-.5);
		\node at (0,-1.3){$\cdots$};
	\end{tikzpicture}
	+(\cdots)
	\ar{d}{\pi}
	\\
	\begin{tikzpicture}[yshift=.6cm,every loop/.style={draw, -}]
		\node[int] (w) at (0,-.6) {};
		\draw (w) edge[in=45,out=135,loop] (w) (w) edge +(-.5,-.5) edge +(0,-.5) edge +(.5,-.5);
		\node at (0,-1.3){$\cdots$};
	\end{tikzpicture} 
	\ar{r}{\delta}
	&
	\begin{tikzpicture}[every loop/.style={draw, -},yshift=.6cm]
		\node[int] (v) at (0,0) {};
		\node[int] (w) at (0,-.6) {};
		\draw 
		(v) edge[in=45,out=135,loop] (v)
		(v) edge (w) (w) edge +(-.5,-.5) edge +(0,-.5) edge +(.5,-.5);
		\node at (0,-1.3){$\cdots$};
	\end{tikzpicture}
	+(\cdots)
\end{tikzcd}
\]
\[
\begin{tikzcd}
	\begin{tikzpicture}[every loop/.style={draw, -},yshift=.6cm]
		\node[int,label=0:{$g_x=1$}] (v) at (0,0) {};
		\node[int] (w) at (0,-.6) {};
		\draw (w) edge[in=200,out=135,loop] (w)
		(v) edge (w) (w) edge +(-.5,-.5) edge +(0,-.5) edge +(.5,-.5);
		\node at (0,-1.3){$\cdots$};
	\end{tikzpicture}
	\ar{r}{\delta} \ar{d}{\pi}
	&
	\begin{tikzpicture}[every loop/.style={draw, -},yshift=.6cm]
		\node[int] (v) at (0,0) {};
		\node[int] (w) at (0,-.6) {};
		\draw (w) edge[in=200,out=135,loop] (w)
		(v) edge[in=45,out=135,loop] (v)
		(v) edge (w) (w) edge +(-.5,-.5) edge +(0,-.5) edge +(.5,-.5);
		\node at (0,-1.3){$\cdots$};
	\end{tikzpicture}
	+
	\begin{tikzpicture}[every loop/.style={draw, -},yshift=.6cm]
		\node[int,label=0:{$g_x=1$}] (v) at (0,0) {};
		\node[int] (v2) at (-.6,-.6) {};
		\node[int] (w) at (0,-.6) {};
		\draw (v2) edge[in=200,out=135,loop] (v2)
		(v2) edge (w)
		(v) edge (w) (w) edge +(-.5,-.5) edge +(0,-.5) edge +(.5,-.5);
		\node at (0,-1.3){$\cdots$};
	\end{tikzpicture}
	+(\cdots)
	\ar{d}{\pi}
	\\
0
	\ar{r}{\delta}
	&
0
\end{tikzcd}
\]

\end{proof}

\begin{rem}
	The overall degree shifts in the definition of the above graph complexes are purely conventional. They indicate that we like to think of the edges as carrying degree $n-1$ and we think of the hairs as edges as well.
	This may be slightly unnatural from the  Feynman transform and moduli space perspective, but it is natural from other contexts (the embedding calculus), and it will streamline certain combinatorial constructions in the next section.
\end{rem}

\subsubsection{Combinatorial description}
\label{sss:DefUndirected}

Let us quickly recall the combinatorial description of $\HGCS_n$. It is similar to the combinatorial description of non-hairy graphs.
Consider the set $\bar\mV_v\bar\mE_e\bar\mH_S\grac^{3}$ containing directed graphs that:
\begin{itemize}
\item are connected;
\item have $v>0$ distinguishable internal vertices that are at least 3-valent;
\item have $e\geq 0$ distinguishable directed edges;
\item have $|S|\geq 0$ distinguishable 1-valent external vertices labelled by the set $S$.
\end{itemize}
For some pictures of such graphs see \eqref{equ:HGexample}.

For $n\in\Z$, let
\begin{equation}
\bar\mV_v\bar\mE_e\bar\mH_S\G_n:=\left\langle\bar\mV_v\bar\mE_e\bar\mH_S\grac^{3}\right\rangle[(1-n)e+nv]
\end{equation}
be the vector space of degree shifted formal linear combinations.

Consider again the action of the group $\sym_v\times \left(\sym_e\ltimes \sym_2^{\times e}\right)$ on $\bar\mV_v\bar\mE_e\bar\mH_S\grac^{3}$. Let
\begin{equation}
\label{eq:VEHG}
\mV_v\mE_e\bar\mH_S\G_{n}:=\left\{
\begin{array}{ll}
\left(\bar\mV_v\bar\mE_e\bar\mH_S\G_n\otimes\sgn_e\right)^{\sym_v\times \left(\sym_e\ltimes \sym_2^{\times e}\right)}
\qquad&\text{for $n$ even,}\\
\left(\bar\mV_v\bar\mE_e\bar\mH_S\G_n\otimes\sgn_v\otimes\sgn_2^{\otimes e}\right)^{\sym_v\times \left(\sym_e\ltimes \sym_2^{\times e}\right)}
\qquad&\text{for $n$ odd.}
\end{array}
\right.
\end{equation}

The underlying vector space of the hairy graph complex is
\begin{equation}
\HGS_{n}:=\bigoplus_{v\geq 1,e\geq 0}\mV_v\mE_e\bar\mH_S\G_{n}.
\end{equation}

The differential again acts by edge contraction:
\begin{equation}
d(\Gamma)=\sum_{a\in E(\Gamma)}\Gamma/a,
\end{equation}
but here $E(\Gamma)$ is the set of edges of $\Gamma$ that are not connected to an external vertex, i.e.\ edges towards an external vertex can not be contracted.

The dual complex is
\begin{equation}
\left(\HGCS_{n},\delta\right)=\left(\HGS_{n},d\right)^*,
\end{equation}
where the differential $\delta$ acts combinatorially by splitting an internal vertex.

\subsubsection{Complexes with (anti-)symmetrized hairs}

We may (anti-)symmetrize external vertices, to make them (up to the sign) indistinguishable.

Let $S=\{1,\dots,s\}$ and let $\sym_s$ (the group of bijections of $S$) act on $\HGS_n$ by permuting external vertices. Let $\sgn_s$ be one-dimensional representations of $\sym_s$, where the odd permutation reverses the sign. For an integer $m$ let
\begin{equation}
\HGs_{m,n}:=\left\{
\begin{array}{ll}
\left(\HGS_n\otimes\Q[-m]^{\otimes k}\right)^{\sym_s}[m]
\qquad&\text{for $m$ even,}\\
\left(\HGS_n\otimes\Q[-m]^{\otimes k}\otimes\sgn_s\right)^{\sym_s}[m]
\qquad&\text{for $m$ odd.}
\end{array}
\right.
\end{equation}
\begin{equation}
\HG_{m,n} = \bigoplus_{s>0} \HGs_{m,n}
\end{equation}
The differential $d$ is still contracting an edge. Again we also consider the dual complex
\begin{equation}
\left(\HGC_{m,n},\delta\right)=\left(\HG_{m,n},d\right)^*.
\end{equation}

There is a dg Lie algebra structure on $\HGC_{m,n}$ defined as
\[
\left[ 
\begin{tikzpicture}[baseline=-.8ex]
\node[draw,circle] (v) at (0,.3) {$\Gamma$};
\node[ext] (e1) at (-.5,-.4) {\;};
\node[ext] (e2) at (-.2,-.4) {\;};
\node at (.18,-.4) {$\scriptstyle\dots$};
\node[ext] (e3) at (.5,-.4) {\;};
\draw (v) edge (e1) edge (e2) edge (e3);
\end{tikzpicture}
,
\begin{tikzpicture}[baseline=-.65ex]
\node[draw,circle] (v) at (0,.3) {$\Gamma'$};
\node[ext] (e1) at (-.5,-.4) {\;};
\node[ext] (e2) at (-.2,-.4) {\;};
\node at (.18,-.4) {$\scriptstyle\dots$};
\node[ext] (e3) at (.5,-.4) {\;};
\draw (v) edge (e1) edge (e2) edge (e3);
\end{tikzpicture}
\right]
=
\sum
\begin{tikzpicture}[baseline=-.8ex]
\node[draw,circle] (v) at (0,1) {$\Gamma$};
\node[ext] (ee1) at (-.5,.3) {\;};
\node[ext] (ee2) at (-.2,.3) {\;};
\node at (.18,.3) {$\scriptstyle\dots$};
\node[draw,circle] (w) at (.8,.3) {$\Gamma'$};
\node[ext] (e1) at (.3,-.4) {\;};
\node[ext] (e2) at (.6,-.4) {\;};
\node at (.98,-.4) {$\scriptstyle\dots$};
\node[ext] (e3) at (1.3,-.4) {\;};
\draw (v) edge (ee1) edge (ee2) edge (w);
\draw (w) edge (e1) edge (e2) edge (e3);
\end{tikzpicture}
-(-1)^{|\Gamma||\Gamma'|}
\sum
\begin{tikzpicture}[baseline=-.8ex]
\node[draw,circle] (v) at (0,1) {$\Gamma'$};
\node[ext] (ee1) at (-.5,.3) {\;};
\node[ext] (ee2) at (-.2,.3) {\;};
\node at (.18,.3) {$\scriptstyle\dots$};
\node[draw,circle] (w) at (.8,.3) {$\Gamma$};
\node[ext] (e1) at (.3,-.4) {\;};
\node[ext] (e2) at (.6,-.4) {\;};
\node at (.98,-.4) {$\scriptstyle\dots$};
\node[ext] (e3) at (1.3,-.4) {\;};
\draw (v) edge (ee1) edge (ee2) edge (w);
\draw (w) edge (e1) edge (e2) edge (e3);
\end{tikzpicture},
\]
where the sum runs over all external vertices of one graph and over all ways of attaching its edge to internal vertices of another graph. $|\Gamma|$ is the degree of $\Gamma$.

Furthermore, for $n=m$ there is a Maurer-Cartan element
\[
m_0 := 
\begin{tikzpicture}[baseline=-.65ex]
\node[ext] (h1) at (0,0) {\;};
\node[ext] (h2) at (0.7,0) {\;};
\draw (h1) edge (h2);
\end{tikzpicture}
\in \HGC_{n,n},
\]
that can be used to twist the complex to $(\HGC_{n,n},\delta+[m_0,\cdot])$.
There is morphism of complexes
\[
(\GC_n[-1],\delta) \to \left(\HGC_{n,n},\delta+[m_0,-]\right)
\]
between the non-hairy graph complex $\GC_n$ and the twisted complex obtained by attaching one hair to a non-hairy graph.
More precisely, a graph $\Gamma \in \GC_n[-1]$ is sent to the linear combination
\begin{align*}
\sum_v \Gamma_v \in \HGC_{n,n},
\end{align*}
where the sum is over the vertices of $\Gamma$ and $\Gamma_v$ is obtained from $\Gamma$ by attaching a hair at vertex $v$.
The following result can be found in \cite{grt,TW2}.
\begin{prop}\label{prop:GCHGC}
The above map of complexes $(\GC_n[-1],\delta) \to \left(\HGC_{n,n},\delta+[m_0,-]\right)$ induces an isomorphism in cohomology in loop orders $\geq 2$.
\end{prop}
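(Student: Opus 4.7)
The plan has two main stages: verifying that the map $\Phi:\Gamma\mapsto\sum_{v\in V(\Gamma)}\Gamma_v$ is a chain map, and showing it induces an isomorphism in cohomology in loop orders $\geq 2$ via a spectral sequence argument along the lines of \cite{grt,TW2}.

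\emph{Chain map.} First I would verify directly that $(\delta+[m_0,-])\Phi(\Gamma)=\Phi(\delta\Gamma)$. Expanding $\delta\Phi(\Gamma)=\sum_v\delta\Gamma_v$, I split contributions according to whether the internal vertex $w$ being split in $\Gamma_v$ coincides with the hair-carrying vertex $v$. For $w\neq v$, the splittings biject with ``split $w$ in $\Gamma$, attach the hair at $v$'', matching the corresponding piece of $\Phi(\delta\Gamma)$. For $w=v$, the hair is redistributed between the two new vertices, yielding the remaining terms of $\Phi(\delta\Gamma)$ together with extra contributions that exactly cancel $[m_0,-]\Phi(\Gamma)$ thanks to the (anti)symmetry of the two hair labels of $m_0$ in the $\sym_s$-invariant conventions of $\HGC_{n,n}$, combined with the signs dictated by the Lie bracket formula.

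\emph{Spectral sequence.} Since $\Phi$ preserves loop order, I fix $g\geq 2$ and filter $(\HGC_{n,n},\delta+[m_0,-])$ by the number of hairs. The filtration is compatible with the differential because $\delta$ preserves hair count while $[m_0,-]$ strictly increases it by one; the $E_0$-differential is $\delta$ alone and the $E_1$-differential is induced by $[m_0,-]$. The key step is to compute the $E_0$-cohomology at fixed loop order and hair number $k$, and then show that the induced $d_1$ kills all $k\neq 1$ contributions, leaving an identification of the surviving $k=1$ piece with $H(\GC_n[-1])$ via hair removal; this identification matches $\Phi$ on the nose.

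\emph{Main obstacle.} The main obstacle is the $E_0$-computation, i.e.\ understanding $H(\HGC_{n,n},\delta)$ at fixed loop order $g$ and hair number $k$. This requires a Kontsevich-style vanishing argument that isolates pathological low-valence contributions—in particular tadpoles attached to hair-bearing vertices and vertices of valence too small after hair attachment—from the bulk of the complex, and it is exactly this step that imposes the restriction $g\geq 2$: at loop order $1$ the graph \eqref{equ:tpwithleg} produces an additional cohomology class with no counterpart in $\GC_n[-1]$, breaking the argument for $g=1$.
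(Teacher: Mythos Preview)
Your spectral sequence is set up in the wrong direction, and this is a genuine gap rather than a missing detail. By filtering so that the $E_0$-differential is $\delta$, the $E_0$-page becomes $\bigoplus_k H\bigl(\HGC^{k\text{-hair}}_{n,n},\delta\bigr)$, i.e.\ the ordinary hairy graph cohomology with $k$ (anti)symmetrized hairs. These groups are not known and are certainly not concentrated in any simple way; computing them is essentially an open problem, not something a ``Kontsevich-style vanishing argument'' will resolve. Your plan then asks that the induced $d_1=[m_0,-]$ kill everything except a piece at $k=1$ identified with $H(\GC_n)$, but that statement is effectively equivalent to the proposition you are trying to prove, so the argument is circular.

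The filtration that actually works is the opposite one: filter the mapping cone by the number of \emph{internal} (non-hair) edges, so that $\delta$ (which creates one internal edge) raises filtration while $\Phi$ and $[m_0,-]$ (which only add hairs) preserve it. On the associated graded the differential is then purely ``add a hair in all possible ways''. For a fixed core graph $\gamma$ with vertex set $V$ this reduces to the complex
\[
\K \xrightarrow{\cdot e} S^1(\K^{V}) \xrightarrow{\cdot e} S^2(\K^{V}) \xrightarrow{\cdot e} \cdots,
\qquad e=\sum_{v\in V} v,
\]
(or its signed analogue), which is acyclic since $e\neq 0$. The caveats about cores that are not themselves admissible $\GC_n$-graphs (bivalent or univalent vertices once hairs are stripped) are what force the restriction to loop order $\geq 2$; this is where the genus~$1$ tadpole class you mention actually enters. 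This is exactly the pattern used in the paper for the analogous statement, cf.\ the proof of Theorem~\ref{thm:Delta cohomology}, where the cone is filtered by internal edges and the associated graded (with differential $\Delta$) is killed by an explicit homotopy, and cf.\ also Proposition~\ref{prop:incl}.

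A smaller point: in your chain-map verification, the ``extra contributions'' from splitting the hair-carrying vertex are not cancelled by $[m_0,-]$ in the way you describe. The operation $[m_0,-]$ adds a \emph{second} hair at some vertex, producing two-hair graphs, whereas the extra splitting terms you need to account for are one-hair graphs in which the hair sits on a newly created vertex that would be only bivalent in $\GC_n$. These must be matched against the corresponding forbidden splittings on the $\GC_n$ side, not against $[m_0,-]$; the role of $[m_0,-]$ is rather that $[m_0,\Phi(\Gamma)]$ vanishes by the (anti)symmetry of the two hairs, so the twisted differential restricts to $\delta$ on the image of $\Phi$.
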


\subsection{Complexes of directed acyclic non-hairy graphs}

In this and the next subsection we define, or recall the definition of several complexes of directed acyclic graphs.
We call these complexes "oriented graph complexes" to comply with the notation of the literature, and also to avoid confusion with the unrelated meaning of the term "acyclic" in homological algebra.

Here we will define the complex of oriented non-hairy graphs. It is considered e.g.\ in \cite{Woriented} and \cite{MultiOriented}.

\subsubsection{Combinatorial description}

Consider the set $\bar\mV_v\bar\mE_e\mO\grac^{2}$ containing directed graphs that
\begin{itemize}
\item are connected;
\item have $v>0$ distinguishable vertices that are at least 2-valent;
\item have $e\geq 0$ distinguishable directed edges;
\item have no passing vertices (2-valent vertices with one incoming and one outgoing edge $\EEpassing$);
\item have no closed directed path along the directed edges (\emph{directed cycles}).
\end{itemize}

For $n\in\Z$, let
\begin{equation}
\bar\mV_v\bar\mE_e\mO\G_n:=\left\langle\bar\mV_v\bar\mE_e\mO\grac^{2}\right\rangle[(n-1)e-nv+n]
\end{equation}
be the vector space of degree shifted formal linear combinations.

Unlike for non-oriented graphs, here we want to keep the direction of edges, i.e.\ we will not take the space of invariants under the action of changing the direction of an edge.
Therefore, let us consider the action of the group $\sym_v\times\sym_e$ on $\bar\mV_v\bar\mE_e\bar\mH_S\mO\grac^{2}$. Let
\begin{equation}
\mV_v\mE_e\mO\G_{n}:=\left\{
\begin{array}{ll}
\left(\bar\mV_v\bar\mE_e\mO\G_n\otimes\sgn_e\right)^{\sym_v\times\sym_e}
\qquad&\text{for $n$ even,}\\
\left(\bar\mV_v\bar\mE_e\mO\G_n\otimes\sgn_v\right)^{\sym_v\times\sym_e}
\qquad&\text{for $n$ odd,}
\end{array}
\right.
\end{equation}

The underlying vector space of 
the \emph{oriented graph complex} is given by
\begin{equation}
\OG_{n}:=\prod_{v\geq 1,e\geq 0}\mV_v\mE_e\mO\G_{n}.
\end{equation}

The differential again acts by edge contraction:
\begin{equation}
d(\Gamma)=\sum_{a\in E(\Gamma)}\Gamma/a
\end{equation}
where $E(\Gamma)$ is the set of edges of $\Gamma$. If a directed cycle is produced, we consider the result to be zero.

The dual complex is
\begin{equation}
\left(\OGC_{n},\delta\right):=\left(\OG_{n},d\right)^*.
\end{equation}
Here differential $\delta$ acts by splitting a vertex.

\subsubsection{Comparison of complexes of oriented and undirected non-hairy graphs}
\label{sec:GCOGCcomparison}

The following results shows that the graph complexes $\G_{n}$ and $\OG_{n+1}$ are homologicaly essentially the same.

\begin{thm}[\cite{MultiOriented}, \cite{MultiSourced}]
For every $n\in\Z$ there is a morphism of complexes
$$
h:\left(\G_{n},d\right)\to \left(\OG_{n+1},d\right)
$$
that respects the gradings by loop order and that induces an isomorphism on cohomology in loop orders $\geq 2$.
\end{thm}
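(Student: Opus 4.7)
The plan is to construct the map $h$ explicitly as a signed sum over directed enhancements and then to prove the quasi-isomorphism property via a spectral sequence argument, following the general strategy of the cited works.

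First, I would define $h: \G_n \to \OG_{n+1}$ on a generator $\Gamma$ by summing over all ways of equipping $\Gamma$ with enough directed-acyclic structure to place it in $\OG_{n+1}$. Concretely, this amounts to choosing an acyclic orientation of the edges of $\Gamma$ together with auxiliary decorations of the resulting source vertices needed to reconcile the degree conventions and vertex-valency constraints (recall that $\G_n$ has $\geq 3$-valent vertices while $\OG_{n+1}$ allows $\geq 2$-valent non-passing vertices). The signs are fixed by an explicit ordering convention on the affected half-edges. The chain map property $h\circ d = d\circ h$ is then verified by a direct combinatorial calculation: the sum over orientations of a contracted graph matches the contraction of the sum over orientations of the original graph, and terms that would produce directed cycles upon contraction cancel in pairs by acyclicity and the chosen signs.

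Next, to establish the cohomology isomorphism in loop orders $\geq 2$, I would filter $\OG_{n+1}$ by the number of source vertices. This filtration is compatible with the differential because edge contraction cannot destroy a source. The associated graded splits into summands indexed by the source count, and the single-source summand is canonically identified, after collapsing the source and its unique outgoing edge into an ordinary vertex, with $\G_n[-1]$. Under this identification the map induced by $h$ on $E^1$-pages restricts to (essentially) the identity on the single-source piece.

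The main obstacle will be to show that the summands with $\geq 2$ sources contribute trivially to cohomology in loop orders $\geq 2$. I would handle this by a secondary filtration, for instance by the number of outgoing edges at a chosen source vertex, and produce explicit contracting homotopies via local surgery moves that merge pairs of source vertices. Such moves require sufficient combinatorial room among the non-source vertices, which exists in loop orders $\geq 2$ but not in loop orders $\leq 1$; this is the combinatorial origin of the loop-order restriction in the theorem statement. Once the vanishing of the multi-source contributions is proved, a routine comparison of spectral sequences yields the quasi-isomorphism claim.
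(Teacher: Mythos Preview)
Your proposal has genuine gaps, both in the construction of $h$ and in the spectral sequence argument.

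\textbf{The map.} The paper (following \cite{MultiOriented,MultiSourced}) does \emph{not} define $h$ as a sum over acyclic orientations. Rather,
\[
h(\Gamma)=\frac{1}{2g}\sum_{x\in V(\Gamma)}(v(x)-2)\sum_{\tau\in S(\Gamma)}h_{x,\tau}(\Gamma),
\]
where the sum is over vertices $x$ and \emph{spanning trees} $\tau$: edges of $\tau$ are directed away from the root $x$, while each non-tree edge is replaced by the pattern $\EdE$, i.e.\ subdivided by a new bivalent source. This subdivision is not an optional decoration; it is forced by the degree count (a graph with $v$ vertices, $e$ edges and loop order $g$ in $\G_n$ must land on a graph with $v+g$ vertices and $e+g$ edges in $\OG_{n+1}$), and the spanning-tree structure is what makes the chain-map verification go through. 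A bare sum over acyclic orientations does not match degrees, and your ``auxiliary decorations of the resulting source vertices'' is too vague to fill this gap.

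\textbf{The filtration.} Your justification is backwards: edge contraction \emph{can} destroy a source (merge a source into a vertex with incoming edges) but cannot \emph{create} one, so the correct statement is that the number of sources is non-increasing under $d$. More seriously, your identification of the single-source summand with $\G_n[-1]$ via ``collapsing the source and its unique outgoing edge'' cannot work as stated: sources in $\OG_{n+1}$ are at least bivalent (there are no passing vertices), so a source has at least two outgoing edges and there is no canonical edge to collapse. There is no evident direct identification of the single-source piece with $\G_n$; the actual proof instead passes through the ``skeleton'' reformulation $\HOsG$, filters by the number of (skeleton) vertices, reduces to the edge differential $d_E$ on a fixed core graph, and then uses a chain of explicit quasi-isomorphisms $f^i$ indexed by a chosen spanning tree to reduce to a one-dimensional comparison. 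Your proposed homotopies ``merging pairs of source vertices'' have no clear analogue in that argument, and I do not see how to make them work without essentially rediscovering the spanning-tree machinery.
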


The map from the theorem is defined as
\begin{equation}\label{equ:GOGmapdef}
h(\Gamma):=
\frac 1 {2g}\sum_{x\in V(\Gamma)}(v(x)-2)\sum_{\tau\in S(\Gamma)}h_{x,\tau}(\Gamma),
\end{equation}
where sums go through all vertices $x$ of $\Gamma$ and all spanning trees $\tau$ of $\Gamma$, $v(x)$ is the valence of $x$ and $g$ is the loop order (the first Betti number) of the graph.

The graph $h_{x,\tau}(\Gamma)$ is the oriented graph obtained from $\Gamma$ by giving to edges of $\tau$ the direction away from the vertex $x$, and replacing other edges with the structure $\EdE$. Detailed construction can be found in \cite{MultiOriented} or \cite{MultiSourced}, and it is similar to our construction from Subsection \ref{ss:map}. The biggest difference is that here we have the extra summation over all vertices. We note that we have included a conventional prefactor $\frac 1 {2g}$ here, that is not present in \cite{MultiOriented, MultiSourced}. Recall that $g=e-v+1$ is the loop order.

\subsection{Complexes of oriented hairy graphs} \label{sec:HOG}

\subsubsection{Combinatorial description}\label{sec:HOGcombinatorial}

Consider the set $\bar\mV_v\bar\mE_e\bar\mH_S\mO\grac^{2}$ containing directed graphs that
\begin{itemize}
\item are connected;
\item have $v>0$ distinguishable internal vertices that are at least 2-valent;
\item have $e\geq 0$ distinguishable directed edges;
\item have $|S|\geq 0$ distinguishable external vertices labelled by the set $S$ that are 1-valent targets, i.e.\ they have one incoming edge attached;
\item have no internal targets (internal vertices without outgoing edge);
\item have no passing vertices (2-valent vertices with one incoming and one outgoing edge $\EEpassing$);
\item have no closed directed path along the directed edges (\emph{directed cycles}).
\end{itemize}
For some pictures of such graphs see \eqref{equ:HOGexample}.

For $n\in\Z$, let
\begin{equation}
\bar\mV_v\bar\mE_e\bar\mH_S\mO\G_n:=\left\langle\bar\mV_v\bar\mE_e\bar\mH_S\mO\grac^{2}\right\rangle[(n-1)e-nv]
\end{equation}
be the vector space of degree shifted formal linear combinations.

Consider the action of the group $\sym_v\times\sym_e$ on $\bar\mV_v\bar\mE_e\bar\mH_S\mO\grac^{2}$.
Let
\begin{equation}
\mV_v\mE_e\bar\mH_S\mO\G_{n}:=\left\{
\begin{array}{ll}
\left(\bar\mV_v\bar\mE_e\bar\mH_S\mO\G_n\otimes\sgn_e\right)^{\sym_v\times\sym_e}
\qquad&\text{for $n$ even,}\\
\left(\bar\mV_v\bar\mE_e\bar\mH_S\mO\G_n\otimes\sgn_v\right)^{\sym_v\times\sym_e}
\qquad&\text{for $n$ odd,}
\end{array}
\right.
\end{equation}

The underlying vector space of 
the \emph{oriented hairy graph complex} is given by
\begin{equation}
\HOGS_{n}:=\prod_{v\geq 1,e\geq 0}\mV_v\mE_e\bar\mH_S\mO\G_{n}.
\end{equation}

The differential again acts by edge contraction:
\begin{equation}
d(\Gamma)=\sum_{a\in E(\Gamma)}\Gamma/a
\end{equation}
where $E(\Gamma)$ is the set of edges of $\Gamma$ that are not connected to an external vertex.
If a directed cycle is produced, we consider the result to be zero.

The dual complex is
\begin{equation}
\left(\HOGCS_{n},\delta\right):=\left(\HOGS_{n},d\right)^*.
\end{equation}
Here differential $\delta$ acts by splitting an internal vertex.

%
%

\subsubsection{A version with input hairs}
We will also need to consider a slight variant $\HHOGS_{n}$ of the complex $\HOGS_{n}$ above, obtained by changing the definition as follows:
\begin{itemize}
	\item A graph must have in addition to the output hairs labelled by $S$ an arbitrary (positive) number of input hairs.
	\item The graphs must not have sources, i.e., internal vertices with only outgoing edges.
\end{itemize}
Here is an example:
\[
\begin{tikzpicture}[scale=.6, yshift=.5cm]
\node[int] (v1) at (0:1) {};
\node[int] (v2) at (90:1) {};
\node[int] (v3) at (180:1) {};
\node[int] (v4) at (-90:1) {};
\node[ext] (h1) at (-90:2) {1};
\draw[-latex] (v2) edge (v4) (v1) edge (v4) edge (v2) (v3) edge (v4) edge (v2) (v4) edge (h1);
\draw[latex-] (v1) edge +(1,0) (v3) edge +(-1,0); 
\end{tikzpicture}
\]

Consider the two types of 'special' vertices $v$ for a graph $\Gamma \in \HHOGS_n$,
\begin{equation}\label{eq:HHGCspecialvert}
\begin{tikzpicture}[scale=.6, yshift=1cm]
\node[] (v1) at (-1.5,0) {$\Gamma\setminus v$};
\node[int,label=-0:{$\scriptstyle v$}] (v2) at (0,-1.5) {};
\node[ext] (h1) at (0,-3) {r};

\draw[-latex] (v1) edge (v2) (v2) edge (h1); 

\draw[latex-] (v2) edge +(0,1) (v2) edge +(1,1); 

\node[] (v2) at (0.5,-0.5) {$_{\ldots}$}; 
\end{tikzpicture}, \quad \text{and}\quad    \begin{tikzpicture}[scale=.6, yshift=-1cm]
\node[] (v1) at (0,0) {$\Gamma\setminus v$};
\node[int,label=-0:{$\scriptstyle v$}] (v2) at (0 ,1.5) {};

\draw[latex-] (v1) edge (v2); 

\draw[latex-] (v2) edge +(-0.6,1)  (v2) edge +(-0.3,1) (v2) edge +(0.6,1); 

\node[] (v2) at (0.2,2.5) {$_{\ldots}$};
\end{tikzpicture},
\end{equation}
with either one ingoing internal edge, one outgoing hair and an arbitrary number of ingoing hairs, or one internal outgoing edge and an arbitrary number of ingoing hairs.

Let $d_v$ be the operation of removing such special vertices 
$$d_v\left( \begin{tikzpicture}[scale=.6, yshift=1cm]
\node[] (v1) at (-1.5,0) {$\Gamma\setminus v$};
\node[int,label=-0:{$\scriptstyle v$}] (v2) at (0,-1.5) {};
\node[ext] (h1) at (0,-3) {r};

\draw[-latex] (v1) edge (v2) (v2) edge (h1); 

\draw[latex-] (v2) edge +(0,1) (v2) edge +(1,1); 

\node[] (v2) at (0.5,-0.5) {$_{\ldots}$};
\end{tikzpicture}  \right) :=  \begin{tikzpicture}[scale=.6, yshift=1cm]
\node[] (v1) at (-1.5,0) {$\Gamma\setminus v$};
\node[ext] (v2) at (0,-1.5) {$r$};

\draw[-latex] (v1) edge (v2) ; 

\end{tikzpicture}, \quad\text{and}\quad d_v\left(\begin{tikzpicture}[scale=.6, yshift=-1cm]
\node[] (v1) at (0,0) {$\Gamma\setminus v$};
\node[int,label=-0:{$\scriptstyle v$}] (v2) at (0 ,1.5) {};

\draw[latex-] (v1) edge (v2); 

\draw[latex-] (v2) edge +(-0.6,1)  (v2) edge +(-0.3,1) (v2) edge +(0.6,1); 

\node[] (v2) at (0.2,2.5) {$_{\ldots}$};
\end{tikzpicture}\right):= \begin{tikzpicture}[scale=.6, yshift=-1cm]
\node[] (v1) at (0,0) {$\Gamma\setminus v$};
\node[] (v2) at (0 ,1.5) {};

\draw[latex-] (v1) edge (v2); 
\end{tikzpicture}.$$
The differential $d$ on $\HHOGS_n$ acts by
\begin{equation}\label{equ:donHHOGC}
	d(\Gamma)= \sum_{e\in E(\Gamma)} \Gamma/e - \sum_{v\in V(\Gamma)} d_v(\Gamma).
\end{equation}

Dually, we again define 
\[
\left(\HHOGCS_{n},\delta\right) := \left(\HHOGS_{n},d\right)^*.
\]

It is clear that the differential cannot change the loop order in graphs. Hence we get in particular a splitting of complexes
\[
\left(\HHOGCS_{n},\delta\right)
\cong 
\prod_g  \left(B_g\HHOGCS_{n},\delta\right),
\]
where we denote by 
\[
	B_g\HHOGCS_{n} \subset \HHOGCS_{n}
\]
the loop order $g$ subcomplex.

\subsubsection{Description as properadic deformation complex}\label{sec:HHOGCSprop}
The oriented graph complexes are very closely connected to properadic deformation complexes. For example, it has been shown in \cite{MWDef} that the complex $\OGC_n$ computes (essentially) the homotopy derivations of a degree shifted version of the Lie bialgebra properad. We can also identify the complexes $\HHOGCS_{n}$ above with pieces of properadic deformation complexes. For our purposes here this reformulation has the main advantage that we do not have to pay too close attention to combinatorial signs and prefactors, which are automatically handled due to generalities on deformation complexes.

For general definitions and statements about deformation complexes we refer to \cite{MV-properad} and \cite[section 3]{MWDef}, whose conventions we shall follow.
Let us only mention that if $\op C$ is a cooperad, which we assume reduced in the sense that $\op C(1,1)=\Q$, one can define a properad $\Omega(\op C)$ via the properadic cobar construction. Furthermore, if $\op P$ is another properad, then we can endow the graded vector space
\[
\prod_{r,s}\geq 1 \Hom_{\bbS_r\times \bbS_r} \left( \op C(r,s), \op P(r,s)\right)	
\] 
with a dg Lie algebra structure, in such a way that the Maurer-Cartan elements are (essentially) in one-to-one correspondence with dg properad maps $\Omega(\op C)\to \op P$. Then, given a properad map $f:\Omega(\op C)\to \op P$ we define the deformation complex $\Def(\Omega(\op C) \xrightarrow{f} \op P)$ as the twist of the dg Lie algebra above by the Maurer-Cartan element associated to the map $f$.

Now consider a properad $\AC$ such that, for $r,s\geq 1$
\[
\AC(r,s) := \Ass(r)\otimes\Com(s) \cong \Ass(r)\cong \K[\bbS_r]. 
\]
The composition morphisms
\[
\circ_j : \AC(r_1,s_1) \otimes \AC(r_2,s_2) \to \AC(r_1+r_2-1,s_1+s_2-1)
\]
are defined as follows. 
\begin{itemize}
\item If $r_1\geq 2$ and $r_2\geq 2$ then $\circ_j$ above is the zero morphism.
\item If $r_1=1$ or $r_2=1$ then $\AC(r_1,s_1)=\K$ or $\AC(r_2,s_2)=\K$ and we define the composition $\circ_j$ to be the identity morphism.
\end{itemize} 
We furthermore define all "higher genus" compositions to be zero.

The operadic ($r=1$-)part of the properad $\AC$ is the commutative operad $\Com$. In particular, we have maps of properads
\[
\Com \to \AC \to \Com.
\]
We also have properad maps 
\[
\IFrob \xrightarrow{*} \AC \xrightarrow{*} \IFrob
\]
factoring through $\Com$. (I.e., the maps are zero in output arity $r\geq 2$.).
Taking properadic (co)bar constructions we hence get properad maps 
\[
 \La\LieB_\infty =\Omega(\IFrob^*) \to \Omega(\AC^*) \to \Omega(\IFrob^*) = \La\LieB_\infty.
\]
Consider the resulting deformation complex (disregarding the differential for now)
\begin{align*}
\Def\left(\Omega(\AC^*) \xrightarrow{*} \La\LieB_\infty\right)
&=
\prod_{r,s}\Hom_{\bbS_r\times \bbS_s}\left(\AC^*(r,s),\La\LieB_\infty(r,s) \right)
\\&\cong 
\prod_{r,s} \left( \La\LieB_\infty(r,s) \right)^{\bbS_s}
.
\end{align*}
Elements of $\La\LieB_\infty(r,s)$ can graphically be considered as linear combinations of directed acyclic graphs with $s$ numbered inputs and $r$ numbered outputs.
Hence the $(r,s)$ piece of the above product is given by linear combinations of drected acyclic graphs with $s$ unlabeled inputs and $r$ numbered outputs. Furthermore, one can check that the differential on the deformation complex is combinatorially just the dual version of the operation \eqref{equ:donHHOGC}: The internal differential on $\La\Lie_\infty$ splits vertices and is dual to the first summand of \eqref{equ:donHHOGC}, and the twist by the Maurer-Cartan element corresponding to the map $*$ is dual to the second summand in \eqref{equ:donHHOGC}.
It is hence also clear that there are natural gradings by loop order of the graphs, and by the output arity $r$, and we can hence consider the subcomplex of loop order $g$ with $r$ outputs
\[
B_g\Def\left(\Omega(\AC^*) \xrightarrow{*} \La\LieB_\infty\right)^r \subset \Def\left(\Omega(\AC^*) \xrightarrow{*} \La\LieB_\infty\right).
\]
Then for $g\geq 1$ we have an isomorphism of complexes
\begin{equation}\label{equ:BgDefBgHHOGC}
B_g\Def\left(\Omega(\AC^*) \xrightarrow{*} \La\LieB_\infty\right)^r
\cong 
B_g \HHOGC_{[r],1}.
\end{equation}

\begin{rem}$ $
\begin{itemize}
\item One may similarly construct a definition $\HHOGC_{[r],n}$ for $n\neq 1$ as well -- we shall however only use the case $n=1$ described above in this paper.
\item
Mind that we state \eqref{equ:BgDefBgHHOGC} for loop orders $g\geq 1$. In loop order zero, there is a minor conventional difference between both sides, in that the unit is an element in $\La\LieB_\infty(1,1)$ whereas the corresponding graph with no internal vertex is not contained in $\HHOGC_{[r],1}$ according to our conventions.

Furthermore, one cannot immediately remove the "$B_g$" on both sides of \eqref{equ:BgDefBgHHOGC}, because $\HHOGC_{[r],1}$ contains potentially infinite series of diagrams whereas $\La\LieB_\infty$ only contains linear combinations. 
This slight technical difficulty could be countered by taking the completion of $\La\LieB_\infty$ by loop order, as is done in \cite{MWDef}.
\end{itemize}
\end{rem}

\subsubsection{Comparison of both versions}
The following result shows that the graphs complexes $\HOGS_{n}$ and $\HHOGS_{n}$ can be considered as "the same object" homologically.

\begin{prop}
For every $S,n$ the map
\begin{align*}
\HHOGS_{n} \to \HOGS_{n}
\end{align*} 
obtained by deleting all input legs from a graph induces an isomorphism in cohomology.
\end{prop}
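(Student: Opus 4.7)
We propose to prove the proposition by a spectral sequence argument. Filter $\HHOGS_n$ decreasingly by the number of input hairs: $\mathcal{F}^p := \{\Gamma \in \HHOGS_n : \#\text{input hairs}(\Gamma) \geq p\}$. This filtration is preserved by the differential $d = d_{\mathrm{edge}} - \sum_v d_v$, since edge contraction fixes the number of input hairs while every $d_v$ only decreases it. The map $F$ is concentrated in filtration degree zero and sends $\mathcal{F}^{\geq 1}$ to zero; hence we may equivalently show that the associated spectral sequence converges to $H(\HOGS_n)$ and is concentrated in filtration degree $0$.

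On the $E^0$-page, the induced differential consists of edge contraction together with those $d_v$'s that remove type-$1$ special vertices carrying no input hairs (equivalently, that contract subdivisions of external legs). For $p \geq 1$, we plan to exhibit an explicit contracting homotopy of the $E^0$-complex by ``isolating'' a distinguished input hair onto a newly created type-$2$ special vertex and exploiting the fact that the residual $d_v$-component of the differential inverts this operation; the identity $dh + hd = \mathrm{id}$ will follow by a direct combinatorial calculation in the spirit of \cite{Woriented,MWDef}.

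For $p = 0$, the $E^0$-complex is the no-source, no-input-hair subcomplex of $\HHOGS_n$ equipped with edge contraction and the subdivision-removal differential. We compare it to $\HOGS_n$ (which additionally allows source vertices) via a standard matching argument reminiscent of the proof of Lemma~\ref{lem:HGCmod}: source vertices in $\HOGS_n$ correspond to residual boundary contributions from the $p = 1$ page, assembling together with the $p = 0$ piece to reproduce the full $H(\HOGS_n)$. Alternatively, for $n=1$ one can invoke the properadic description of Section~\ref{sec:HHOGCSprop} and realize $F$ as the dual of a map of properad resolutions induced by $\IFrob \to \AC$, which affords a more conceptual proof of the quasi-isomorphism.

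The principal technical obstacle will be the consistent formulation of the contracting homotopy in the presence of signs, edge orientations and the symmetrization defining the (co)invariants. This is a routine but tedious bookkeeping task, closely parallel to analogous arguments in the literature on oriented graph complexes \cite{Woriented,MWDef}, and does not introduce new ideas beyond those already established there.
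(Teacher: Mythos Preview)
The paper does not give an independent proof; it observes that the statement is essentially \cite[Proposition 4.1.2]{MWDef} (with output hairs labelled rather than symmetrized, which plays no role) and defers to that reference.

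Your proposal is in the general spirit of such arguments, but it contains a concrete error at the very first step. You define the decreasing filtration $\mathcal{F}^p=\{\Gamma:\#\text{input hairs}(\Gamma)\geq p\}$ and assert that it is preserved by the differential ``since edge contraction fixes the number of input hairs while every $d_v$ only decreases it.'' This reasoning is inverted: an operation that \emph{decreases} the number of input hairs sends an element of $\mathcal{F}^p$ into $\mathcal{F}^{p-k}\supsetneq\mathcal{F}^p$ for some $k\geq 1$, and hence does \emph{not} preserve the decreasing filtration. Every special vertex of either type in \eqref{eq:HHGCspecialvert} carries at least one input hair (otherwise it would be a forbidden passing vertex or a univalent internal vertex), so each $d_v$ strictly lowers the hair count. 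The spectral sequence you describe therefore does not exist as stated.

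A related problem is your treatment of $p=0$. By definition every graph in $\HHOGS_n$ has a \emph{positive} number of input hairs, so $\mathcal{F}^0=\mathcal{F}^1$ and the associated graded in degree $0$ is zero; there is no ``no-source, no-input-hair subcomplex of $\HHOGS_n$'' to speak of. In particular $\HOGS_n$ does not sit inside $\HHOGS_n$, and the map $F$, which lands in graphs with zero input hairs, cannot be ``concentrated in filtration degree zero'' of any filtration on the source alone. A workable fix is to filter the mapping cone instead, or---closer to what is actually done in \cite{MWDef}---to choose a filtration for which the $d_v$ terms become the leading piece of the differential on the associated graded, and then show that the resulting complex retracts onto graphs without special vertices, which after deleting the remaining input hairs are identified with $\HOGS_n$.
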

\begin{proof}
	The result is essentially \cite[Proposition 4.1.2]{MWDef}, expcept that there the output hairs are not numbered. However, numbering the output legs does not affect the proof at all, so that one can obtain our result above just by copy-pasting the argument.	
\end{proof}

\subsubsection{Complexes with (anti-)symmetrized hairs}

As in the non-directed case, we may (anti-)symmetrize external vertices. Let $S=\{1,\dots,s\}$ and let $\sym_s$ act on $\HOGS_n$ by permuting external vertices. For an integer $m$ let
\begin{equation}
\HOGs_{m,n}:=\left\{
\begin{array}{ll}
\left(\HOGS_n\otimes\Q[-m]^{\otimes s}\right)^{\sym_s}[m]
\qquad&\text{for $m$ even,}\\
\left(\HOGS_n\otimes\Q[-m]^{\otimes s}\otimes\sgn_s\right)^{\sym_s}[m]
\qquad&\text{for $m$ odd.}
\end{array}
\right.
\end{equation}
\begin{equation}
\HOG_{m,n} = \bigoplus_{s>0} \HOGs_{m,n}
\end{equation}
The differential $d$ is still contracting an edge. There is a dual
\begin{equation}
\left(\HOGC_{m,n},\delta\right)=\left(\HOG_{m,n},d\right)^*.
\end{equation}

There is a similar Lie algebra structure on $\HOGC_{m,n}$ defined as
\[
\left[ 
\begin{tikzpicture}[baseline=-.8ex]
\node[draw,circle] (v) at (0,.3) {$\Gamma$};
\node[ext] (e1) at (-.5,-.4) {\;};
\node[ext] (e2) at (-.2,-.4) {\;};
\node at (.18,-.4) {$\scriptstyle\dots$};
\node[ext] (e3) at (.5,-.4) {\;};
\draw (v) edge[-latex] (e1) edge[-latex] (e2) edge[-latex] (e3);
\end{tikzpicture}
,
\begin{tikzpicture}[baseline=-.65ex]
\node[draw,circle] (v) at (0,.3) {$\Gamma'$};
\node[ext] (e1) at (-.5,-.4) {\;};
\node[ext] (e2) at (-.2,-.4) {\;};
\node at (.18,-.4) {$\scriptstyle\dots$};
\node[ext] (e3) at (.5,-.4) {\;};
\draw (v) edge[-latex] (e1) edge[-latex] (e2) edge[-latex] (e3);
\end{tikzpicture}
\right]
=
\sum
\begin{tikzpicture}[baseline=-.8ex]
\node[draw,circle] (v) at (0,1) {$\Gamma$};
\node[ext] (ee1) at (-.5,.3) {\;};
\node[ext] (ee2) at (-.2,.3) {\;};
\node at (.18,.3) {$\scriptstyle\dots$};
\node[draw,circle] (w) at (.8,.3) {$\Gamma'$};
\node[ext] (e1) at (.3,-.4) {\;};
\node[ext] (e2) at (.6,-.4) {\;};
\node at (.98,-.4) {$\scriptstyle\dots$};
\node[ext] (e3) at (1.3,-.4) {\;};
\draw (v) edge[-latex] (ee1) edge[-latex] (ee2) edge[-latex] (w);
\draw (w) edge[-latex] (e1) edge[-latex] (e2) edge[-latex] (e3);
\end{tikzpicture}
-(-1)^{|\Gamma||\Gamma'|}
\sum
\begin{tikzpicture}[baseline=-.8ex]
\node[draw,circle] (v) at (0,1) {$\Gamma'$};
\node[ext] (ee1) at (-.5,.3) {\;};
\node[ext] (ee2) at (-.2,.3) {\;};
\node at (.18,.3) {$\scriptstyle\dots$};
\node[draw,circle] (w) at (.8,.3) {$\Gamma$};
\node[ext] (e1) at (.3,-.4) {\;};
\node[ext] (e2) at (.6,-.4) {\;};
\node at (.98,-.4) {$\scriptstyle\dots$};
\node[ext] (e3) at (1.3,-.4) {\;};
\draw (v) edge[-latex] (ee1) edge[-latex] (ee2) edge[-latex] (w);
\draw (w) edge[-latex] (e1) edge[-latex] (e2) edge[-latex] (e3);
\end{tikzpicture},
\]
where the sum runs over all external vertices of one graph and over all ways of attaching its edge to internal vertices of another graph.

Here, for $m=n-1$ there is a Maurer-Cartan element
\[
m_1 =
\sum_{k\geq 2} 
\frac 2 {k!}
\underbrace{
\begin{tikzpicture}[baseline=-.65ex]
\node[int] (v) at (0,.5) {};
\node[ext] (h1) at (-.7,-.2) {\;};
\node[ext] (h2) at (-.3,-.2) {\;};
\node[ext] (h3) at +(.7,-.2) {\;};
\draw (v) edge[-latex] (h1) edge[-latex] (h2)  edge[-latex] (h3) +(.25,-.7) node {$\scriptstyle \dots$};
\end{tikzpicture}
}_{k\times}
\in \HOGC_{n,n+1}.
\]
that can be used to twist the complex to $(\HOGC_{n,n+1},\delta+[m_1,\cdot])$.

\subsubsection{Comparison of complexes of oriented hairy and non-hairy graphs}\label{sec:hairynonhairOGC}

We recall from \cite{MWDef} that one can define a map of complexes between the hairy and nonhairy oriented graph complexes
\[
 (\OGC_n,\delta) \to \left(\HOGC_{n-1,n}, \delta + [m_1,\cdot]\right)
\]
by sending a graph $\Gamma\in \OGC_n$ to the infinite series of graphs obtained by attaching hairs to vertices in all possible ways, schematically
\[
\Gamma \mapsto \sum_{k\geq 1} \frac 1 {k!} 
\underbrace{
\begin{tikzpicture}
	\node[] (v) at (0,.5) {$\Gamma$};
	\node[ext] (h1) at (-.7,-.2) {\;};
	\node[ext] (h2) at (-.3,-.2) {\;};
	\node[ext] (h3) at +(.7,-.2) {\;};
	\draw (v) edge[-latex] (h1) edge[-latex] (h2)  edge[-latex] (h3) +(.25,-.7) node {$\scriptstyle \dots$};
\end{tikzpicture}
}_{k\times}
\in \HOGC_{n,n+1}
\]

If the graphs produced on the right have sinks, thus violating the conditions of section \ref{sec:HOGcombinatorial}, they are dropped. So at least one hair needs to be added to each sink of $\Gamma$. Furthermore, it is clear that the map respects the loop order gradings on both sides. From \cite{MWDef} we cite the following result.
\begin{prop}[Proposition 3 of \cite{Woriented} or Proposition 4.1.1 of \cite{MWDef}]\label{prop:OGCtoHOGC}
The map $ (\OGC_n,\delta) \to (\HOGC_{n-1,n}, \delta + [m_1,\cdot])$ above is a quasi-isomorphism in loop orders $\geq 1$.
\end{prop}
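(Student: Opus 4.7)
The plan is to identify both sides with pieces of properadic deformation complexes and reduce the claim to a filtration argument, following the strategy of \cite{MWDef}. First, by the preceding proposition, deletion of input hairs yields a quasi-isomorphism $\HHOGC_{n-1,n} \simeq \HOGC_{n-1,n}$. The MC element $m_1$ lifts canonically to an MC element $\tilde m_1 \in \HHOGC_{n-1,n}$, obtained by summing over all ways of attaching arbitrarily many input hairs to the (would-be) source vertex of $m_1$, and the quasi-isomorphism intertwines the corresponding twists. Hence it suffices to prove that the analogous map $\OGC_n[-1]\to(\HHOGC_{n-1,n}, \delta + [\tilde m_1,\cdot])$, defined by attaching output hairs in all possible ways, is a quasi-isomorphism in loop orders $\geq 1$.

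Under the identification \eqref{equ:BgDefBgHHOGC}, the loop-order-$g$ piece of $\HHOGC_{[r],1}$ is the properadic deformation complex $\Def(\Omega(\AC^*) \xrightarrow{*} \La\LieB_\infty)^r$. After antisymmetrization and summation over output arities, the twisted complex $(\HHOGC_{n-1,n}, \delta + [\tilde m_1,\cdot])$ becomes the deformation complex of the properad map obtained by adding $\tilde m_1$ to the zero map $*$. By a parallel analysis one identifies $\OGC_n$ (up to shift) with a deformation complex of a map $\La\Lie_\infty \to \La\LieB_\infty$ encoding deformations of the canonical inclusion. Under these identifications, the hair-attaching map of the proposition corresponds to the inclusion of deformation complexes induced by the underlying properad map $\La\Lie \to \AC$.

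The remaining step is then to show that this inclusion of deformation complexes is a quasi-isomorphism in loop orders $\geq 1$. The natural approach is to use a descending filtration on the hairy (larger) complex by the number of ``tree clusters'' of output hairs, i.e., by the number of internal vertices whose outgoing edges terminate only at output hairs; equivalently, filter the properadic deformation complex by output arities of the generators of $\Omega(\AC^*)$ that are not in the sub-complex. On the $E^0$-page the differential restricts to the internal splitting differential together with the $\La\Lie_\infty$-differential on each cluster. A Künneth-type computation then shows that the symmetric configuration of hairs attached at each vertex has trivial cohomology except in the ``no extra hairs'' case, identifying $E^1$ with $\OGC_n$ with the expected differential.

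The main obstacle will be the verification of the Künneth-type $E^1$ computation and the associated sign and degree bookkeeping, together with ensuring convergence of the spectral sequence on the completed complex $\HOGC_{n-1,n}$ (handled by working one loop order at a time, where each filtration stratum is bounded). The loop order $\geq 1$ hypothesis is essential: at tree level the cobar resolution $\La\LieB_\infty=\Omega(\IFrob^*)$ contributes nontrivial cohomology classes that do not correspond to any non-hairy oriented graphs, so the comparison must necessarily exclude the tree contributions.
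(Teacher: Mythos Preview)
Your proposal follows the deformation-complex strategy of \cite{MWDef}, which is indeed one of the two references cited for this proposition. The paper, however, does not reproduce that argument; it instead supplies a short, purely combinatorial alternative. Concretely, the paper filters both sides by the number of internal targets (sinks). On the associated graded the map becomes the inclusion
\[
(\OGC_n[-1],\delta_0)\hookrightarrow (\HOGC_{n-1,n},\delta),
\]
where $\delta_0$ is the piece of the splitting differential that keeps the number of sinks fixed, and one attaches exactly one hair to each sink. This inclusion is then shown to be a quasi-isomorphism (the paper's Proposition~\ref{prop:incl}) by a ``bad vertex'' argument: a bad vertex is an internal vertex adjacent to an external vertex and with more than one outgoing edge; one filters the quotient by the number of non-(2-valent bad) vertices and exhibits an explicit contracting homotopy on the first page. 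Convergence is handled by passing to fixed loop order.

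Your route is legitimate in outline, but as written it is substantially less complete than the paper's argument. The passage through $\HHOGC$, the lift $\tilde m_1$, and especially the ``K\"unneth-type'' computation on the $E^1$-page are only asserted; the filtration by ``tree clusters of output hairs'' is not obviously the right object (and as described is ambiguous), and the claim that ``the symmetric configuration of hairs attached at each vertex has trivial cohomology except in the no-extra-hairs case'' is exactly the heart of the matter and would need to be proved. By contrast, the paper's target-filtration reduces everything to a one-line homotopy. If you want to keep the operadic approach, you should at least make the filtration precise and actually carry out the vertex-wise acyclicity computation; otherwise the paper's combinatorial argument is both shorter and self-contained.
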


We shall elaborate a bit further on the result.
Every oriented non-hairy graph needs to have a target, and we may construct a filtration of the complex on the number of targets. The differential graded complex, called \emph{fixed target graph complex}, is $(\OGC_n,\delta_0)$ where $\delta_0$ is the part of the differential that does not change the number of targets (sinks). It, or rather its isomorphic version with sources instead of targets, is considered in detail in \cite{AssarMarko}.

There associated graded of the above map is then
\begin{equation}
\label{eq:incl}
(\OGC_n[-1],\delta_0)\hookrightarrow (\HOGC_{n-1,n},\delta)
\end{equation}
where a hair is attached to every target.
Proposition \ref{prop:OGCtoHOGC} can then be seen as an immediate Corollary of the following result.

\begin{prop}
\label{prop:incl}
For every $n$ the inclusion
$$
(\OGC_n[-1],\delta_0)\hookrightarrow (\HOGC_{n-1,n},\delta)
$$
is a quasi-isomorphism.
\end{prop}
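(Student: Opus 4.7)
The plan is to compute $H(\HOGC_{n-1,n})$ via a spectral sequence attached to a natural filtration by ``pendant-count'', to identify the top piece of the associated graded with $(\OGC_n[-1],\delta_0)$, and to show that the remaining associated graded pieces are acyclic; the inclusion will then induce the sought isomorphism.

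Call a hair-bearing internal vertex \emph{pendant} if it carries exactly one hair and no other outgoing edge. Endow $\HOGC_{n-1,n}$ with the descending filtration $F_j = \{\Gamma : \Gamma \text{ has at least } j \text{ pendant vertices}\}$. A direct case analysis on vertex splits shows that admissible splits never decrease the pendant count --- indeed, the only new pendant that a split $v \mapsto v_1 \to v_2$ can create is $v_2$, and this happens precisely when exactly one hair of $v$ is sent to $v_2$ and all non-hair outgoing edges of $v$ stay on $v_1$ --- so each $F_j$ is a subcomplex. Since $\delta$ preserves the hair count $k$ and each pendant carries one hair, we have $F_{k+1}=0$ in the hair-count-$k$ part; moreover $F_k$ is exactly the image of $i$, and the admissibility constraint forbidding internal targets in $\HOGC$ precisely rules out the splits that would create new targets in the underlying non-hairy graph. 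Consequently $\delta\vert_{F_k}$ is conjugate under $i$ to $\delta_0$ on $\OGC_n$.

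Since the filtration is bounded, the associated spectral sequence converges strongly to $H(\HOGC_{n-1,n})$. Its top column satisfies $E_1^k \cong H(F_k) \cong H(\OGC_n[-1],\delta_0)$, and by construction the composition $\OGC_n[-1] \xrightarrow{i} \HOGC_{n-1,n} \twoheadrightarrow E_1^k$ is the identity. To conclude that $i$ is a quasi-isomorphism it therefore suffices to show that $E_1^j = 0$ for each $j<k$, i.e.\ that the quotient complex $(F_j/F_{j+1}, \bar\delta)$ --- whose elements are graphs with exactly $j$ pendants and $k-j \geq 1$ ``bad'' hairs and whose differential is the pendant-count-preserving part of $\delta$ --- is acyclic.

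The main obstacle is precisely this acyclicity of the lower filtration pieces. A natural approach is a secondary filtration, for instance by the number of bad vertices of each type (multi-hair versus hair-plus-outgoing), followed by an explicit contracting homotopy on the double associated graded that locally inverts a splitting operation turning a bad hair-vertex configuration into a pendant. A possibly cleaner alternative is to recast the statement in the properadic deformation-complex language of Section~\ref{sec:HHOGCSprop} (after replacing $\HOGC$ by $\HHOGC$ via the preceding proposition), where the required vanishing should follow from Koszul-duality manipulations comparing the properads $\Com$, $\AC$, and $\IFrob$. In either strategy the most delicate bookkeeping is that of signs coming from the hair antisymmetrization combined with the admissibility constraints (no internal targets, no passing vertices, no directed cycles), directly analogous to the computations of \cite{MWDef} for the twisted version of the present statement (Proposition~\ref{prop:OGCtoHOGC}) and of \cite{AssarMarko} for the fixed-target oriented graph complex, which corresponds precisely to our $F_k$ piece.
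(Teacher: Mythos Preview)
Your setup is sound and parallels the paper's: your ``pendant'' vertices are exactly the complement of the paper's ``bad'' vertices among hair-bearing vertices, your identification $F_k = \operatorname{im}(i)$ with induced differential $\delta_0$ is correct, and your check that admissible splits never decrease the pendant count is right (the potentially dangerous splits are all killed by the no-internal-target or no-passing-vertex constraints, as you observe).

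The genuine gap is that you do not prove the acyclicity of $F_j/F_{j+1}$ for $j<k$; you explicitly flag this as ``the main obstacle'' and then only gesture at two strategies. This is the entire content of the proposition, and neither sketch is close to a proof. Your second alternative (recasting via $\HHOGC$ and Koszul duality for $\AC$ versus $\IFrob$) is a substantial detour with no clear endpoint. Your first alternative (secondary filtration plus contracting homotopy) is the right idea, and is exactly what the paper does --- but it does so on the full quotient $\HOGC_{n-1,n}/\operatorname{im}(i)$ rather than on each $F_j/F_{j+1}$ separately, which is simpler. Concretely: call a bad vertex \emph{2-valent bad} if it is a bivalent source whose two outgoing edges are one hair and one internal edge. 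Filter the quotient by the number of internal vertices that are \emph{not} 2-valent bad; on the associated graded, the surviving piece $\delta_1$ of the differential is the operation that takes a bad vertex and splits off a 2-valent bad vertex carrying one of its hairs. The homotopy $h$ that contracts the internal (non-hair) edge of each 2-valent bad vertex satisfies $h\delta_1+\delta_1 h = c\cdot\id$ with $c$ the number of bad vertices, which is $\geq 1$ on the quotient. This is the missing step, and it is short once you see it; your proposal stops just before it.
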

\begin{proof}
Let $\Gamma\in\HOGC_{n-1,n}$. An internal vertex in $\Gamma$ is called ``bad vertex'' if it shares an edge with an external vertex and it has more than one outgoing edges. The one going to the external vertex has to be outgoing.

The image of the inclusion is exactly the sub-complex spanned by graphs with no bad vertices. It is enough to prove that the quotient spanned by graphs with at least one bad vertex is acyclic.

On that quotient let us make a spectral sequence on the number of internal vertices that are not 2-valent bad vertices. Those vertices are 2-valent sources $\dEd$ with one edge heading towards an external vertex. On the first page of the spectral sequence there is the differential $\delta_{1}$ that produces such a vertex. It is produced by splitting a bad vertex that was more than 2-valent.

There is a homotopy $h$ that contracts the edge of a 2-valent bad vertex that does not head towards the external vertex. One easily checks that $h\delta_1+\delta_1 h=c\id$ where $c$ is the number of bad vertices. This implies that the cohomology on the first page of the spectral sequence is zero. After splitting the complexes into complexes with fixed loop order, standard spectral sequence arguments imply that the spectral sequence converges correctly, so the result follows.
\end{proof}

\subsection{Skeleton version of $\HOG$}
For later proofs it will be convenient to consider an alternative definition of the 
complex of directed acyclic graphs $\left(\HOGS_n,d\right)$ introduced above.
More precisely, let us define the graph complex $\left(\HOsGS_n,d\right)\cong \left(\HOGS_n,d\right)$ as follows, using results from from \cite[Section 2]{MultiSourced}: 

Recall the set $\bar\mV_v\bar\mE_e\bar\mH_S\grac^{3}$ from Subsection \ref{sss:DefUndirected}.
In this context, those graphs are called \emph{core graphs}. The direction of an edge in a core graph is called \emph{core direction}.
To each edge of a core graph we attach an ``edge type'' from $\sigma:=\{\Ed[n-1],\dE[n-1],\Ess[n-2]\}$. The direction of elements of $\sigma$ are called \emph{type directions}. For $\Ed$ and $\Ess$ they go along core direction, and for $\dE$ it is the opposite of core direction.
Admissible graphs are those graphs with edge types such that:
\begin{itemize}
\item every internal vertex has an attached edge of type $\Ed$ or $\dE$ with type direction away from the vertex (they are not type-targets);
\item an edge adjacent to an external vertex is not of type $\Ed$ or $\dE$ with type direction going away from the external vertex (external vertices are type-targets); 
\item there are no closed paths along edges of type $\Ed$ or $\dE$ along type directions (\emph{type-directed cycles}).
\end{itemize}
The set of all admissible graphs is denoted by $\bar\mV_v\bar\mE_e\bar\mH_S\mO^{sk}\grac\subset\bar\mV_v\bar\mE_e\bar\mH_S\grac^{3}\times\sigma^{\times e}$.
For $n\in\Z$, let
\begin{equation}
\bar\mV_v\bar\mE_e\bar\mH_S\mO^{sk}\G_n:=\left\langle\bar\mV_v\bar\mE_e\bar\mH_S\mO^{sk}\grac\right\rangle[-nv].
\end{equation}
Note that admissible graphs already have degrees that come from degree of edge types in $\sigma$.

Similarly as before, there is a natural right action of the group $\sym_v\times \left(\sym_e\ltimes \sym_2^{\times e}\right)$ on $\bar\mV_v\bar\mE_e\bar\mH_S\mO^{sk}\G_n$, where $\sym_v$ permutes vertices, $\sym_e$ permutes edges, and $\sym_2^{\times e}$ changes the core direction of edges and changes edge types as
\begin{equation}
\Ed \leftrightarrow\dE,\quad\Ess\mapsto -(-1)^{n}\Ess.
\end{equation}

Recall that $\sgn_v$ and $\sgn_e$ are one-dimensional representations of $\sym_v$, respectively $\sym_e$, where the odd permutation reverses the sign. They can be considered as representations of the whole product $\sym_v\times \left(\sym_e\ltimes \sym_2^{\times e}\right)$. Let
\begin{equation}
\label{eq:VEHOsG}
\mV_v\mE_e\bar\mH_S\mO^{sk}\G_{n}:=\left\{
\begin{array}{ll}
\left(\bar\mV_v\bar\mE_e\bar\mH_S\mO^{sk}\G_n\otimes\sgn_e\right)^{\sym_v\times \left(\sym_e\ltimes \sym_2^{\times e}\right)}
\qquad&\text{for $n$ even,}\\
\left(\bar\mV_v\bar\mE_e\bar\mH_S\mO^{sk}\G_n\otimes\sgn_v\right)^{\sym_v\times \left(\sym_e\ltimes \sym_2^{\times e}\right)}
\qquad&\text{for $n$ odd,}
\end{array}
\right.
\end{equation}
\begin{equation}
\HOsGS_{n}:=\prod_{v\geq 1,e\geq 0}\mV_v\mE_e\bar\mH_S\mO^{sk}\G_{n}.
\end{equation}

There are two differentials. The core differential $d_C$ contracts edges of type $\Ed$ and $\dE$ that connect two internal vertices. The edge differential $d_E$ changes type of edges as
\begin{equation}\label{equ:dEdef}
\Ess\mapsto\Ed-(-1)^n\dE,
\end{equation}
summed over all edges, including those connecting external vertices. If such operation produces a type-directed cycle or makes an external edge a type-target, we consider the result to be zero.
The total differential is
\begin{equation}
d:=d_C+(-1)^{n\deg}d_E.
\end{equation}

This complex $\left(\HOsGS_{n},d\right)$ is designed to be isomorphic to the original version $\left(\HOGS_{n},d\right)$. In short, at least 3-valent  vertices and 2-valent sources in a graph in $\HOGS_{n}$ are called \emph{skeleton internal vertices}. Strings of edges and vertices between two skeleton internal vertices or external vertices have to be in the set $\{\Ed,\dE,\dEd\}$, and they are called \emph{skeleton edges}. A corresponding graph in $\HOsGS_{n}$ is the one with skeleton internal vertices as internal vertices, external vertices remaining the same, and skeleton edges as edges, where $\dEd$ is mapped to $\Ess$. One can check that the degrees and parities are correctly defined, and obtain the following result.

\begin{prop}
\label{prop:sk}
There is an isomorphism of complexes
\begin{equation*}
\kappa:\left(\HOGS_{n},d\right)\rightarrow\left(\HOsGS_{n},d\right).
\end{equation*}
\end{prop}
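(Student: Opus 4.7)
The plan is to construct $\kappa$ explicitly on generators and verify it is an isomorphism of complexes. Given $\Gamma \in \HOGS_n$, its internal vertices are either at-least-3-valent or 2-valent sources (the only kind of 2-valent internal vertex allowed, since passing vertices and internal sinks are excluded). Because a 2-valent source $v$ has no incoming edges, its two outgoing edges cannot originate from another 2-valent source, so the 2-valent sources form an independent set. For each such $v$ with outgoing edges $v\to a$ and $v\to b$, replace the triple $a\leftarrow v\to b$ by a single $\Ess$-edge between $a$ and $b$ with core direction inherited from a fixed convention (say, from the first edge to the second in the chosen edge order). All other edges of $\Gamma$ (including those incident to external vertices) simply become $\Ed$ or $\dE$ edges of $\kappa(\Gamma)$. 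The inverse $\kappa^{-1}$ re-expands each $\Ess$-edge into a $\dEd$-path by reintroducing its central 2-valent source; by construction $\kappa \circ \kappa^{-1} = \mathrm{id}$ and $\kappa^{-1}\circ\kappa = \mathrm{id}$.

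Next, I would verify that $\kappa$ respects degrees, parities and admissibility. An $\Ed$ or $\dE$ type edge carries degree $n-1$, while an $\Ess$ edge carries degree $n-2$, which matches the combined degree of the two outgoing edges plus the absorbed 2-valent source, namely $2(n-1)+(-n) = n-2$. The sign representations $\sgn_v$, $\sgn_e$, $\sgn_2^{\otimes e}$ in \eqref{eq:VEHOsG} are organized so that the bijection between $(2\text{-valent source} + \text{two outgoing edges})$ and $\Ess$-edges is compatible with the actions of $\sym_v \times (\sym_e \ltimes \sym_2^{\times e})$ up to a conventional sign that is fixed by the ordering choice. The admissibility conditions for $\HOsGS_n$ are immediate: external vertices are 1-valent sinks of $\Gamma$, so they remain type-targets; each at-least-3-valent internal vertex of $\Gamma$ has at least one outgoing edge (it is not an internal target) which cannot terminate at a 2-valent source, so after applying $\kappa$ it still carries at least one $\Ed$ or $\dE$ edge with type-direction pointing away; and the absence of directed cycles in $\Gamma$ implies the absence of type-directed cycles in $\kappa(\Gamma)$.

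The main step is matching the differentials. Contracting an edge of $\Gamma$ splits into two cases according to whether the edge connects two at-least-3-valent vertices or whether it is one of the two outgoing edges of a 2-valent source $v$. In the first case the corresponding operation on $\kappa(\Gamma)$ is the contraction of an $\Ed$ or $\dE$ edge, which is precisely $d_C$. In the second case, writing $u \leftarrow v \to w$ for the local configuration, contracting $v\to u$ absorbs $v$ into $u$ and leaves an edge $u\to w$, while contracting $v\to w$ leaves an edge $w\to u$. Under $\kappa$ these two terms correspond respectively to replacing the $\Ess$-edge at that location by an $\Ed$-edge and by a $\dE$-edge; and a direct computation of the induced sign, coming from Koszul-commuting the creation of the new edge past the rest of the graph data and from the convention on the core direction of $\Ess$, gives precisely $\Ed - (-1)^n \dE$, matching the formula \eqref{equ:dEdef} for $d_E$. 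The overall prefactor $(-1)^{n\deg}$ on $d_E$ in the total differential absorbs the discrepancy between contracting edges in $\HOGS_n$ (which always creates a new edge of degree $n-1$ from one of degree $n-2$) and the order in which the operations are applied in $\HOsGS_n$.

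The principal obstacle will be sign and orientation bookkeeping, especially making sure that the antisymmetry coming from the $\sym_2^{\times e}$-action on edge directions in $\HOsGS_n$ matches the edge-direction data in $\HOGS_n$, and that contracting the ``left'' versus ``right'' outgoing edge of a 2-valent source produces the exact sign difference required to recover $\Ed - (-1)^n\dE$. These checks are combinatorially routine but tedious; since the analogous isomorphism for non-hairy oriented graph complexes is carried out in detail in \cite[Section 2]{MultiSourced} and our setup differs only by the presence of external hairs (which are inert under all relevant operations, since they may not be endpoints of contracted edges and cannot carry or be adjacent to hidden 2-valent sources in a way that changes the combinatorics), the sign calculation reduces to the one already performed there.
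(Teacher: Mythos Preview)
Your proposal is correct and follows essentially the same approach as the paper: the paper does not give a formal proof of this proposition but merely describes the map $\kappa$ (replace each pattern $\dEd$ by a crossed edge $\Essu$, leave everything else alone), provides an example, and defers the sign and degree bookkeeping to \cite[Section~2]{MultiSourced}. Your write-up is in fact more detailed than the paper's own treatment---in particular your observation that the 2-valent sources form an independent set (so the replacement is well-defined and the resulting internal vertices are all at least 3-valent), your degree count $2(n-1)+(-n)=n-2$, and your case split for the differential are all implicit in the paper but not spelled out there.
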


It is probably easiest to illustrate the above correspondence by an example.
\begin{align*}
\begin{tikzpicture}[scale=.7]
	\node[int] (v1) at (0,1) {};
	\node[int] (v2) at (-1,0) {};
	\node[int] (v3) at (0,-1) {};
	\node[int] (v4) at (1,0) {};
	\node[ext] (w) at (0,-2) {$\scriptstyle 1$};
	\draw[-latex] (v1) edge (v3) (v2) edge (v1) edge (v3) (v4) edge (v1) edge (v3) (v3) edge (w);
\end{tikzpicture}
\in \HOG^{[1]}_{2}
\quad
\xrightarrow{\kappa}
\quad
\pm \,
\begin{tikzpicture}[scale=.7]
	\node[int] (v1) at (0,1) {};
	\node[int] (v3) at (0,-1) {};
	\node[ext] (w) at (0,-2) {$\scriptstyle 1$};
	\draw[-latex] (v1) edge (v3) (v3) edge (w);
	\draw (v1) edge[crossed, bend left] (v3) (v1) edge[crossed, bend right] (v3);
\end{tikzpicture}
\in \HOsG^{[1]}_{2}
\end{align*}
On the right-hand side we have drawn the crossed edges without directions, since these directions are identified, up to sign, by taking the $\bbS_2$-invariants.
In words we can say that the map $\kappa$ just replaces all patterns $\dEd$ in the graph by a crossed edge 
$\Essu$ and otherwise leaves the graph the same. It is clear that this is an isomorphism, the inverse map just performs that replacement in the opposite direction.
Note however that the resulting graphs' internal vertices are always at least 3-valent.

\section{The relation of oriented and undirected graph complexes}\label{sec:main proof}
In this section we show our main technical Theorems \ref{thm:main} and \ref{thm:mainlie}.

\subsection{The definition of the map}
\label{ss:map}

In this subsection we are going to define the map $\Phi:\left(\HGS_{n},d\right)\rightarrow\left(\HOGS_{n+1},d\right)$.
We follow the methods from \cite{MultiSourced} and \cite{AssarMarko}.
Thanks to Proposition \ref{prop:sk} we may consider as well the "skeleton version" $\left(\HOsGS_{n+1},d\right)\cong \left(\HOGS_{n+1},d\right)$ of the oriented graph complex and define our map as
\begin{equation}
\Phi:\left(\HGS_{n},d\right)\rightarrow\left(\HOsGS_{n+1},d\right).
\end{equation}
The map
\begin{equation*}
\F: \left(\HOGCS_{n+1},\delta\right) \to \left(\HGCS_{n},\delta\right),
\end{equation*}
in Theorem \ref{thm:main} is then the map dual to $\Phi$.

\subsubsection{Forests}
Let us pick the number of internal vertices $v$, the number of edges $e$ and the set of external vertices $S$.
Let $\Gamma\in\bar\mV_v\bar\mE_e\bar\mH_S\G_{n}$ be a graph with said numbers of vertices, edges and hairs.

Let a \emph{forest} be any subgraph of $\Gamma$ that contains all its external vertices, that does not contain cycles (of any orientation), and all of whose connected components contain exactly one external vertex each. Let a \emph{spanning forest} be a forest that contains all vertices.
Let $F(\Gamma)$ be the set of all spanning forests of $\Gamma$.
An example of a spanning forest is given in Figure \ref{fig:span}.

\begin{figure}[H]
$$
\begin{tikzpicture}
 \node[int] (a) at (0,0) {};
 \node[int] (b) at (1,0) {};
 \node[int] (c) at (1,1) {};
 \node[int] (d) at (0,1) {};
 \node[int] (a1) at (-.5,-.5) {};
 \node[int] (b1) at (1.5,-.5) {};
 \node[int] (c1) at (1.5,1.5) {};
 \node[int] (d1) at (-.5,1.5) {};
 \node[int] (x) at (-1,.5) {};
 \node[int] (y) at (1.5,.5) {};
 \node[int] (z) at (2.2,.5) {};
 \node[ext] (e1) at (-1.6,.5) {1};
 \node[ext] (e2) at (-.9,-.9) {2};
 \node[ext] (e3) at (1.9,-.9) {3};
 \node[ext] (e4) at (1.9,1.9) {4};
 \draw (a) edge[red] (b);
 \draw (b) edge[dotted] (c);
 \draw (c) edge[dotted] (d);
 \draw (d) edge[dotted] (a);
 \draw (a1) edge[dotted] (b1);
 \draw (b1) edge[red] (y);
 \draw (y) edge[red] (c1);
 \draw (b1) edge[dotted] (z);
 \draw (z) edge[dotted] (c1);
 \draw (y) edge[red] (z);
 \draw (c1) edge[dotted] (d1);
 \draw (d1) edge[dotted] (a1);
 \draw (a) edge[red] (a1);
 \draw (b) edge[dotted] (b1);
 \draw (c) edge[red] (c1);
 \draw (d) edge[red] (d1);
 \draw (a1) edge[dotted] (x);
 \draw (x) edge[red] (d1);
 \draw (x) edge[red] (e1);
 \draw (a1) edge[red] (e2);
 \draw (b1) edge[red] (e3);
 \draw (c1) edge[dotted] (e4);
\end{tikzpicture}
$$
\caption{\label{fig:span}
An example of a hairy graph $\Gamma$ for $S=\{1,2,3,4\}$ and a spanning forest $\tau\in F(\Gamma)$. Edges of the forest are red, while other edges are dotted.}
\end{figure}
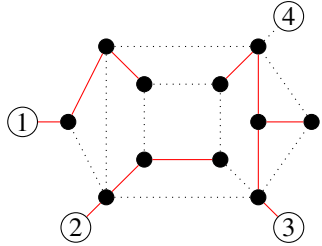

\subsubsection{Model pairs}
Let $\tau\in F(\Gamma)$ be a spanning forest of $\Gamma$. Also recall that $\Gamma$ comes with a numbering of edges and vertices. (These numberings are later removed in the definition of the graph complex, but we need to consider them here to define signs properly.)
We say that the pair $(\Gamma,\tau)$ is a \emph{model} if the following conditions are satisfied:
\begin{itemize}
	\item All edges of a connected component in $\tau$ are directed towards the external vertex of that connected component.
	\item An edge in $\tau$ has the same label as the vertex on its tail, labels being in the set $\{1,\dots,v\}$;
\end{itemize}

An example of model is given in Figure \ref{fig:model}.

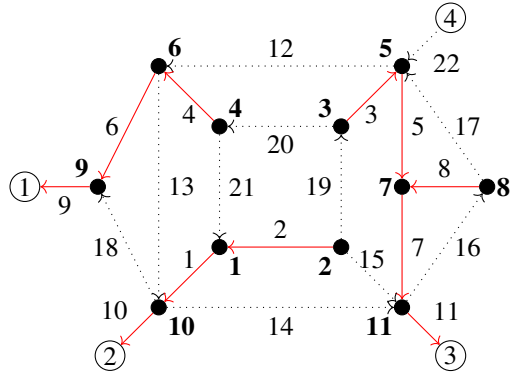
\begin{figure}[H]
$$
\begin{tikzpicture}[scale=1.6]
 \node[int] (a) at (0,0) {};
 \node[below right] at (a) {\bf 1};
 \node[int] (b) at (1,0) {};
 \node[below left] at (b) {\bf 2};
 \node[int] (c) at (1,1) {};
 \node[above left] at (c) {\bf 3};
 \node[int] (d) at (0,1) {};
 \node[above right] at (d) {\bf 4};
 \node[int] (a1) at (-.5,-.5) {};
 \node[below right] at (a1) {\bf 10};
 \node[int] (b1) at (1.5,-.5) {};
 \node[below left] at (b1) {\bf 11};
 \node[int] (c1) at (1.5,1.5) {};
 \node[above left] at (c1) {\bf 5};
 \node[int] (d1) at (-.5,1.5) {};
 \node[above right] at (d1) {\bf 6};
 \node[int] (x) at (-1,.5) {};
 \node[above left] at (x) {\bf 9};
 \node[int] (y) at (1.5,.5) {};
 \node[left] at (y) {\bf 7};
 \node[int] (z) at (2.2,.5) {};
 \node[right] at (z) {\bf 8};
 \node[ext] (e1) at (-1.6,.5) {1};
 \node[ext] (e2) at (-.9,-.9) {2};
 \node[ext] (e3) at (1.9,-.9) {3};
 \node[ext] (e4) at (1.9,1.9) {4};
 \draw (a) edge[red,<-] node[above] {\color{black} 2} (b);
 \draw (b) edge[dotted,->] node[left] {19} (c);
 \draw (c) edge[dotted,->] node[below] {20} (d);
 \draw (d) edge[dotted,->] node[right] {21} (a);
 \draw (a1) edge[dotted,->] node[below] {14} (b1);
 \draw (b1) edge[red,<-] node[right] {\color{black} 7} (y);
 \draw (y) edge[red,<-] node[right] {\color{black} 5} (c1);
 \draw (b1) edge[dotted,->] node[right] {16} (z);
 \draw (z) edge[dotted,->] node[right] {17} (c1);
 \draw (y) edge[red,<-] node[above] {\color{black} 8} (z);
 \draw (c1) edge[dotted,->] node[above] {12} (d1);
 \draw (d1) edge[dotted,->] node[right] {13} (a1);
 \draw (a) edge[red,->] node[above] {\color{black} 1} (a1);
 \draw (b) edge[dotted,->] node[above] {15} (b1);
 \draw (c) edge[red,->] node[below] {\color{black} 3} (c1);
 \draw (d) edge[red,->] node[below] {\color{black} 4} (d1);
 \draw (a1) edge[red,->] node[above left] {\color{black} 10} (e2);
 \draw (a1) edge[dotted,->] node[left] {18} (x);
 \draw (x) edge[red,<-] node[left] {\color{black} 6} (d1);
 \draw (x) edge[red,->] node[below] {\color{black} 9}(e1);
 \draw (b1) edge[red,->] node[above right] {\color{black} 11} (e3);
 \draw (c1) edge[dotted,<-] node[below right] {\color{black} 22}(e4);
\end{tikzpicture}
$$
\caption{\label{fig:model}
An example of model with the spanning forest from Figure \ref{fig:span}. Edges of the forest are red, while other edges are dotted. Labels of internal vertices are thick.}
\end{figure}

It is clear that every pair $(\Gamma,\tau)$ with $\tau \in F(\Gamma)$ can be mapped to a model by renumbering the edges and vertices and changing edge directions, i.e., by the action of an element of the group $\sym_v\times \left(\sym_e\ltimes \sym_2^{\times e}\right)$.



\subsubsection{Defining the map for a model}

Let us now pick up a model $(\Gamma,\tau)$, with a (single term) graph\footnote{Mind that elements of the graph complex are linear combinations of combinatorial graphs, and we consider here a single combinatorial graph, as an element of the graph complex.} $\Gamma\in\bar\mV_v\bar\mE_e\bar\mH_S\G_{n}$ and a spanning forest $\tau\in F(\Gamma)$. The graph $\Gamma$ after ignoring the degree can be considered as a core graph in $\bar\mV_v\bar\mE_e\bar\mH_S\grac^3$. To all of its edges that belong to the spanning forest (i.e., that are in $E(\tau)$) we attach an edge type $\Ed$, and to those that are not in the spanning forest we attach edge type $\Ess$ to get an element of $\bar\mV_v\bar\mE_e\bar\mH_S\mO^{sk}\grac$. Then after taking coinvariants and adding the degrees we get a skeleton graph
\begin{equation}
\Phi_{\tau}(\Gamma)\in\mV_v\mE_e\bar\mH_S\mO^{sk}\G_{n+1}.
\end{equation}

It is straightforward to check the following:
\begin{itemize}
\item the result $\Phi_{\tau}(\Gamma)$ is an admissible type oriented graph;
\item the map is well defined in a sense that if there is an element of $\sym_v\times \left(\sym_e\ltimes \sym_2^{\times e}\right)$ that sends one model to another model, the same result in $\mV_v\mE_e\mD^{sk}\GC_{n+1}$ is obtained;
\item $\Phi_{\tau}(\Gamma)$ and $\Gamma$ are of the same degree.
\end{itemize}

An example of $\Phi_{\tau}(\Gamma)$ is given in Figure \ref{fig:Phi}.

\begin{figure}[H]
$$
\begin{tikzpicture}[scale=1.4]
 \node[int] (a) at (0,0) {};
 \node[int] (b) at (1,0) {};
 \node[int] (c) at (1,1) {};
 \node[int] (d) at (0,1) {};
 \node[int] (a1) at (-.5,-.5) {};
 \node[int] (b1) at (1.5,-.5) {};
 \node[int] (c1) at (1.5,1.5) {};
 \node[int] (d1) at (-.5,1.5) {};
 \node[int] (x) at (-1,.5) {};
 \node[int] (y) at (1.5,.5) {};
 \node[int] (z) at (2.2,.5) {};
 \node[ext] (e1) at (-1.6,.5) {1};
 \node[ext] (e2) at (-.9,-.9) {2};
 \node[ext] (e3) at (1.9,-.9) {3};
 \node[ext] (e4) at (1.9,1.9) {4};
 \draw (a) edge[latex-] (b);
 \draw (b) edge[crossed,->] (c);
 \draw (c) edge[crossed,->] (d);
 \draw (d) edge[crossed,->] (a);
 \draw (a1) edge[crossed,->] (b1);
 \draw (b1) edge[latex-] (y);
 \draw (y) edge[latex-] (c1);
 \draw (b1) edge[crossed,->] (z);
 \draw (z) edge[crossed,->] (c1);
 \draw (y) edge[latex-] (z);
 \draw (c1) edge[crossed,->] (d1);
 \draw (d1) edge[crossed,->] (a1);
 \draw (a) edge[-latex] (a1);
 \draw (b) edge[crossed,->] (b1);
 \draw (c) edge[-latex] (c1);
 \draw (d) edge[-latex] (d1);
 \draw (a1) edge[crossed,->] (x);
 \draw (x) edge[latex-] (d1);
 \draw (x) edge[-latex] (e1);
 \draw (a1) edge[-latex] (e2);
 \draw (b1) edge[-latex] (e3);
 \draw (c1) edge[crossed,<-] (e4);
\end{tikzpicture}
$$
\caption{\label{fig:Phi}
Hairy oriented graph $\Phi_\tau(\Gamma)$ for the graph $\Gamma$ and spanning forest $\tau$ from  Figure \ref{fig:span}.}
\end{figure}
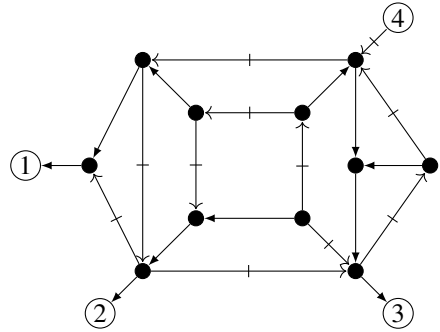

\subsubsection{The final map}

The map is now extended to all pairs $(\Gamma,\tau)$ by invariance under the action of $\sym_v\times \left(\sym_e\ltimes \sym_2^{\times e}\right)$. Then let us define
\begin{equation}
\Phi:\bar\mV_v\bar\mE_e\bar\mH_S\G_{n}\rightarrow\mV_v\mE_e\bar\mH_S\mO^{sk}\G_{n+1}, \quad
\Gamma\mapsto\sum_{\tau\in F(\Gamma)}\Phi_{\tau}(\Gamma).
\end{equation}
The invariance under all actions implies that the induced map $\Phi:\mV_v\mE_e\bar\mH_S\G_{n}\rightarrow\mV_v\mE_e\bar\mH_S\mO^{sk}\G_{n+1}$ is well defined. It is then extended to a map of graded vector spaces
\begin{equation}
\Phi:\HGCS_{n}\rightarrow\HOsGS_{n+1}.
\end{equation}

Rewording the above construction up to signs, this map is defined on some (undirected hairy) graph $\Gamma\in \HGCS_{n}$ as follows: We sum over all spanning forests of $\Gamma$. For each such forest 
we build a directed acyclic graph by the following procedure:
\begin{itemize}
\item We direct each edge in the spanning forest towards the unique external vertex in its tree.
\item We replace all other edges by a crossed edge 
$\Essu$.
Recall also that these crossed edges in the "skeleton" graph complex $\HOsGS_{n+1}$ are just placeholders for zigzags $\dEd$ in graphs in the graph complex $\HOGS_{n+1}$. 
\end{itemize}

\begin{prop}
\label{prop:map}
The map $\Phi:\left(\HGCS_{n},d\right)\rightarrow\left(\HOsGS_{n+1},d\right)$ is a map of complexes of degree zero, i.e.\
\begin{equation}\label{equ:propmap}
\Phi(d\Gamma)=d\Phi(\Gamma)
\end{equation}
for every $\Gamma\in\HGCS_{n}$.
\end{prop}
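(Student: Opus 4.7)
My plan is to split the differential on $\HOsGS_{n+1}$ as $d = d_C + (-1)^{(n+1)\deg}d_E$ and to establish the two identities $d_C \Phi(\Gamma) = \Phi(d\Gamma)$ and $d_E \Phi(\Gamma) = 0$ separately. Since the left-hand side of \eqref{equ:propmap} only involves edge contractions in $\Gamma$, whose images in $\HOsGS_{n+1}$ have no $d_E$-type modified edges, it is natural that the $d_E$-contributions on the right-hand side must cancel among themselves, while the matching with $\Phi(d\Gamma)$ is carried entirely by $d_C$.

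For the $d_C$ identity, the key combinatorial device is the standard bijection between spanning forests $\tau$ of $\Gamma$ containing a prescribed internal edge $e$ and spanning forests of the contracted graph $\Gamma/e$, given by $\tau \leftrightarrow \tau/e$. Under this bijection the forest orientations of $\Phi_\tau(\Gamma)$ (all pointing toward the unique external vertex of each component) descend compatibly to those of $\Phi_{\tau/e}(\Gamma/e)$, so that the $d_C$-contraction of the directed forest edge $e$ in $\Phi_\tau(\Gamma)$ yields $\Phi_{\tau/e}(\Gamma/e)$ up to a sign arising from the degree shifts and the action of $\sym_v \times (\sym_e \ltimes \sym_2^{\times e})$. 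Summing over all pairs $(\tau, e)$ with $e \in \tau$ internal then gives $d_C \Phi(\Gamma) = \Phi(d\Gamma)$ up to verifying signs.

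For the $d_E$ identity I would group the terms of $d_E \Phi(\Gamma)$ by the directed subgraph $\tau \cup \{e\}$ appearing in the output, for each pair $(\tau, e)$ with $e \notin \tau$. This subgraph either contains a unique cycle $C$ (when both endpoints of $e$ lie in the same component of $\tau$) or merges two components of $\tau$ (when the endpoints lie in different components). In the cycle case, for each edge $e' \in C$ the set $\tau' := (\tau \cup \{e\}) \setminus \{e'\}$ is again a spanning forest, and the $d_E$-image of $\Phi_{\tau'}(\Gamma)$ at $e'$ contributes to the same output directed graph via the orientation of $e'$ compatible with $\tau'$, the opposite orientation being killed by the no-directed-cycle constraint. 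In the merging case the analogous exchange runs along the unique path in $\tau \cup \{e\}$ between the two external vertices of the merged component. In both cases the contributions telescope around $C$, respectively along the path, and cancel. The main obstacle will be the sign bookkeeping, which combines the prefactor $(-1)^{(n+1)\deg}$, the sign inside $\Ed - (-1)^{n+1}\dE$, and the coinvariant signs from $\sym_v \times (\sym_e \ltimes \sym_2^{\times e})$; fortunately the same combinatorial/sign patterns arise in the analogous $h$-maps of \cite{MultiOriented,MultiSourced,AssarMarko}, of which our $\Phi$ is the formal "source analogue" (non-tree edges decorated with $\dEd$ rather than $\EdE$), so the sign verification transports with only routine modifications.
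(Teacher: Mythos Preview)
Your proposal follows essentially the same route as the paper's proof: split $d=d_C\pm d_E$, match $d_C\Phi$ with $\Phi d$ via the spanning-forest bijection $\tau\leftrightarrow\tau/e$, and kill $d_E\Phi$ by regrouping the pairs $(\tau,e\notin\tau)$ according to whether $\tau\cup\{e\}$ contains a cycle (the paper's ``cycled forests'' lemma) or joins two components at their external vertices (the paper's ``double-hair forests'' lemma), with a telescoping cancellation in each case. One small correction to your cycle-case description: for a generic $e'\in C$ \emph{both} orientations of $e'$ produce admissible graphs, and each cancels against one orientation coming from a neighbouring edge of $C$; the no-directed-cycle constraint only kills the two boundary terms of the telescope, namely at the edges of $C$ incident to the vertex $y\in C$ closest to the external leg.
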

\begin{proof}

We have already checked that the degree of $\Phi$ is zero.
The proof of the other claim is similar to the proofs of \cite[Proposition 4.4]{MultiSourced} and \cite[Proposition 3.4]{AssarMarko}. Nevertheless, let us quickly go through the argument.

Let $\Gamma\in\bar\mV_v\bar\mE_e\bar\mH_S\G_{n}$.
It then holds that
$$
\Phi(d\Gamma)=
\Phi\left(\sum_{a\in E(\Gamma)}\Gamma/a\right)=
\sum_{a\in E(\Gamma)}\Phi\left(\Gamma/a\right)=
\sum_{a\in E(\Gamma)}\sum_{\tau\in F(\Gamma/a)}\Phi_{\tau}(\Gamma/a)
$$
where $\Gamma/a$ is the graph obtained by contracting the edge $a$ in $\Gamma$.
Spanning forests of $\Gamma/a$ are in natural bijection with spanning forests of $\Gamma$ that contain $a$, $\tau/a\leftrightarrow\tau$, so we can write
$$
\Phi(d\Gamma)=
\sum_{a\in E(\Gamma)}
\sum_{\substack{\tau\in F(\Gamma)\\ a\in E(\tau)}}\Phi_{\tau/a}(\Gamma/a)=
\sum_{\tau\in F(\Gamma)}\sum_{a\in E(\tau)}\Phi_{\tau/a}(\Gamma/a).
$$

\begin{lemma}
Let $\Gamma\in\bar\mV_v\bar\mE_e\bar\mH_S\G_{n}$, $\tau\in F(\Gamma)$ and $a\in E(\tau)$. Then
\begin{equation}
\Phi_{\tau/a}(\Gamma/a)\sim \Phi_{\tau}(\Gamma)/a,
\end{equation}
where $\sim$ means that they are in the same class of coinvariants under the action of $\sym_v\times\sym_e$.
\end{lemma}
\begin{proof}
It is clear that one side is $\pm$ the other side. Careful calculation of the sign is left to the reader.
\end{proof}

The lemma implies that
\begin{equation}
\Phi(d\Gamma)\sim\sum_{\tau\in F(\Gamma)}\sum_{a\in E(\tau)}\Phi_{\tau}(\Gamma)/a.
\end{equation}

Next we consider the right-hand side of \eqref{equ:propmap}, i.e., $d\Phi(\Gamma)$. There the differential decomposes as $d=d_C\pm d_E$, in a component $d_C$ that contracts ("core") internal directed edges and $d_E$ that replaces crossed edges by directed edges, see \eqref{equ:dEdef}.
Note also that the edges of $\Phi_\tau(\Gamma)$ are in 1-1-correspondence to those in $\Gamma$.
Given the spanning forest $\tau$, edges of $\Phi_\tau(\Gamma)$, i.e.\ of $\Gamma$, effected by the differential $d=d_C\pm d_E$ can be partitioned as
$$
E(\tau)\sqcup ED(\tau)\sqcup EC(\tau),
$$
where $E(\tau)$ is the set of edges between internal vertices in the forest $\tau$ and we define:


\begin{itemize}
\item $ED(\tau)$ is the set of edges that connect two connected components of $\tau$, including those attached to an external vertex if that vertex alone forms a connected component of $\tau$;
\item $EC(\tau)$ is the set of edges that make a cycle in a connected component of $\tau$.
\end{itemize}
Edges from $E(\tau)$ are effected by core differential $d_C$, and edges from $ED(\tau)$ and $EC(\tau)$ are effected by edge differential $d_E$.

Note that edges adjacent to external vertices can not be contracted, so they are not included in $E(\tau)$ if they are in the forest. But if they are not in the forest they are included in $ED(\tau)$ because edge differential can act on them.

Edges from $E(\tau)$ can be contracted by $d_C$, so
\begin{equation}
\label{eq:map1}
d_C \Phi(\Gamma)=
d_C\left(\sum_{\tau\in F(\Gamma)}\Phi_{\tau}(\Gamma)\right)=
\sum_{\tau\in F(\Gamma)}d_C\left(\Phi_{\tau}(\Gamma)\right)=
\sum_{\tau\in F(\Gamma)}\sum_{a\in E(\tau)}\Phi_{\tau}(\Gamma)/a\sim
\Phi(d\Gamma).
\end{equation}

The edge differential $d_E$ acts on edges of type $\Ess$, which are those in the sets $ED(\tau)$ and $EC(\tau)$. We then split
\begin{equation}
d_E(\Phi_\tau(\Gamma))=d_{ED}(\Phi_\tau(\Gamma))+d_{EC}(\Phi_\tau(\Gamma)),
\end{equation}
where
\begin{equation}
d_{ED}(\Phi_\tau(\Gamma))=\sum_{a\in ED(\tau)}d_E^{(a)}(\Phi_\tau(\Gamma)),\quad
d_{EC}(\Phi_\tau(\Gamma))=\sum_{a\in EC(\tau)}d_E^{(a)}(\Phi_\tau(\Gamma)),
\end{equation}

where $d_E^{(a)}$ maps edge $a$ as $\Ess\mapsto=\Ed-(-1)^{n+1}\dE$.

\begin{lemma}
\label{lem:map3}
Let $\Gamma\in\bar\mV_v\bar\mE_e\bar\mH_S\G_{n}$. Then
$$
\sum_{\tau\in F(\Gamma)}d_{EC}\left(\Phi_{\tau}(\Gamma)\right)\sim 0.
$$
\end{lemma}
\begin{proof}
Let
$$
N(\Gamma):=
\sum_{\tau\in F(\Gamma)}d_{EC}\left(\Phi_{\tau}(\Gamma)\right)=
\sum_{\tau\in F(\Gamma)}\sum_{a\in EC(\tau)}d_E^{(a)}\left(\Phi_{\tau}(\Gamma)\right).
$$

Terms in the above relation can be summed in another order. Let $FC(\Gamma)$ (\emph{cycled forests}) be the set of all sub-graphs $\rho$ of $\Gamma$ that contain all internal and external vertices, have $v+1$ edges, and whose every connected component has exactly one hair. Those graphs are similar to spanning forests, but have one cycle.
Let $C(\rho)$ be the set of edges in the cycle of $\rho$. Clearly, $\rho\setminus \{a\}$ for $a\in C(\rho)$ is a spanning forest of $\Gamma$ and sets $\{(\tau,a)|\tau\in F(\Gamma),a\in EC(\tau)\}$ and $\{(\rho,a)|\rho\in FC(\Gamma),a\in C(\rho)\}$ are bijective, so
$$
N(\Gamma)=\sum_{\rho\in FC(\Gamma)}\sum_{a\in C(\rho)}d_E^{(a)}\left(\Phi_{\rho\setminus \{a\}}(\Gamma)\right).
$$

It is now enough to show that
$$
\sum_{a\in C(\rho)}d_E^{(a)}\left(\Phi_{\rho\setminus \{a\}}(\Gamma)\right)\sim 0
$$
for every $\rho\in FC(\Gamma)$.
Let $y\in V(\Gamma)$ be the internal vertex in the cycle of $\rho$ closest to the external vertex of its connected component (along $\rho$). After choosing $a\in C(\rho)$, the cycle in $\Phi_{\rho\setminus \{a\}}(\Gamma)$ has the edge $a$ of type $\Ess$, and other edges of type $\Ed$ or $\dE$ with direction from $y$ to the edge $a$, such as in the following diagram.
$$
\begin{tikzpicture}[baseline=0ex,scale=.7]
 \node[int] (a) at (0,1.1) {};
 \node[int] (b) at (1,.5) {};
 \node[int] (c) at (1,-.5) {};
 \node[int] (d) at (0,-1.1) {};
 \node[int] (e) at (-1,-.5) {};
 \node[int] (f) at (-1,.5) {};
 \node[above] at (a) {$\scriptstyle y$};
 \draw (a) edge[latex-] (b);
 \draw (b) edge[latex-] (c);
 \draw (a) edge[latex-] (f);
 \draw (f) edge[latex-] (e);
 \draw (e) edge[latex-] (d);
 \draw (c) edge[<-,crossed] (d);
\end{tikzpicture}
$$
After acting by $d_E^{(a)}$ this $\Ess$ is replaced by $\Ed+(-1)^n\dE$, such as in the following diagram.
$$
\begin{tikzpicture}[baseline=-.6ex,scale=.7]
 \node[int] (a) at (0,1.1) {};
 \node[int] (b) at (1,.5) {};
 \node[int] (c) at (1,-.5) {};
 \node[int] (d) at (0,-1.1) {};
 \node[int] (e) at (-1,-.5) {};
 \node[int] (f) at (-1,.5) {};
 \node[above] at (a) {$\scriptstyle y$};
 \draw (a) edge[latex-] (b);
 \draw (b) edge[latex-] (c);
 \draw (a) edge[latex-] (f);
 \draw (f) edge[latex-] (e);
 \draw (e) edge[latex-] (d);
 \draw (c) edge[latex-] (d);
\end{tikzpicture}
\quad+(-1)^n\quad
\begin{tikzpicture}[baseline=-.6ex,scale=.7]
 \node[int] (a) at (0,1.1) {};
 \node[int] (b) at (1,.5) {};
 \node[int] (c) at (1,-.5) {};
 \node[int] (d) at (0,-1.1) {};
 \node[int] (e) at (-1,-.5) {};
 \node[int] (f) at (-1,.5) {};
 \node[above] at (a) {$\scriptstyle y$};
 \draw (a) edge[latex-] (b);
 \draw (b) edge[latex-] (c);
 \draw (a) edge[latex-] (f);
 \draw (f) edge[latex-] (e);
 \draw (e) edge[latex-] (d);
 \draw (c) edge[-latex] (d);
\end{tikzpicture}
$$
Careful calculation of the sign shows that those two terms are cancelled with terms given from choosing neighboring edges in $C(\rho)$, and two last terms which do not have a corresponding neighbor are indeed $0$ as they have a type-cycle.
This concludes the proof that $N(\Gamma)\sim 0$.
\end{proof}

The similar study of the action on edges from $ED(\tau)$ leads to the following lemma.

\begin{lemma}
\label{lem:map4}
Let $\Gamma\in\bar\mV_v\bar\mE_e\bar\mH_S\G_{n}$. Then
$$
\sum_{\tau\in F(\Gamma)}d_{ED}\left(\Phi_{\tau}(\Gamma)\right)\sim 0.
$$
\end{lemma}
\begin{proof}
It holds that
$$
\sum_{\tau\in F(\Gamma)}d_{ED}\left(\Phi_{\tau}(\Gamma)\right)=
\sum_{\tau\in F(\Gamma)}\sum_{a\in ED(\tau)}d_E^{(a)}(\Phi_\tau(\Gamma)).
$$

Let $FD(\Gamma)$ (\emph{double-hair forests}) be the set of all sub-graphs $\lambda$ of $\Gamma$ that contain all internal and external vertices, have no cycles, whose one connected component has exactly two external vertices and whose other connected components have exactly one external vertex. Let those two external vertices be $j(\lambda),k(\lambda)\in S$.

For $\lambda\in FD(\Gamma)$ let $P(\lambda)$ be the set of edges in the path from $j(\lambda)$ to $k(\lambda)$. Clearly, $\lambda\setminus \{a\}$ for $a\in P(\lambda)$ is a spanning forest of $\Gamma$ and $a$ is in $ED(\Gamma)$ for that spanning forest. One can easily see that sets $\{(\tau,a)|\tau\in F(\Gamma),a\in ED(\tau)\}$ and $\{(\lambda,a)|\lambda\in FD(\Gamma),a\in P(\lambda)\}$ are bijective, so
$$
\sum_{\tau\in F(\Gamma)}d_{ED}\left(\Phi_{\tau}(\Gamma)\right)=
\sum_{\lambda\in FD(\Gamma)}\sum_{a\in P(\lambda)}d_E^{(a)}\left(\Phi_{\lambda\setminus\{a\}}(\Gamma)\right).
$$

To finish the proof it is enough to show that
$$
\sum_{a\in P(\lambda)}d_E^{(a)}\left(\Phi_{\lambda\setminus\{a\}}(\Gamma)\right)\sim
0
$$
for every $\lambda\in FD(\Gamma)$. After choosing $a\in P(\lambda)$ the path from $j(\lambda)$ to $k(\lambda)$ along $\lambda$ in $\Phi_{\lambda\setminus\{a\}}(\Gamma)$ has the edge $a$ of type $\Ess$, and the other edges of type $\Ed$ or $\dE$ with direction from $j(\lambda)$ or $k(\lambda)$ to the edge $a$, such as in the following diagram.
$$
\begin{tikzpicture}[baseline=0ex,scale=.7]
 \node[ext] (b) at (1,.5) {$\scriptstyle k(\lambda)$};
 \node[int] (c) at (1,-.5) {};
 \node[int] (d) at (0,-1.1) {};
 \node[int] (e) at (-1,-.5) {};
 \node[ext] (f) at (-1,.5) {$\scriptstyle j(\lambda)$};
 \draw (b) edge[latex-] (c);
 \draw (f) edge[latex-] (e);
 \draw (e) edge[latex-] (d);
 \draw (c) edge[<-,crossed] (d);
\end{tikzpicture}
$$
After acting by $d_E^{(a)}$ this $\Ess$ is replaced by $\Ed+(-1)^n\dE$, such as in the following diagram.
$$
\begin{tikzpicture}[baseline=-.6ex,scale=.7]
 \node[ext] (b) at (1,.5) {$\scriptstyle k(\lambda)$};
 \node[int] (c) at (1,-.5) {};
 \node[int] (d) at (0,-1.1) {};
 \node[int] (e) at (-1,-.5) {};
 \node[ext] (f) at (-1,.5) {$\scriptstyle j(\lambda)$};
 \draw (b) edge[latex-] (c);
 \draw (f) edge[latex-] (e);
 \draw (e) edge[latex-] (d);
 \draw (c) edge[latex-] (d);
\end{tikzpicture}
\quad+(-1)^n\quad
\begin{tikzpicture}[baseline=-.6ex,scale=.7]
 \node[ext] (b) at (1,.5) {$\scriptstyle k(\lambda)$};
 \node[int] (c) at (1,-.5) {};
 \node[int] (d) at (0,-1.1) {};
 \node[int] (e) at (-1,-.5) {};
 \node[ext] (f) at (-1,.5) {$\scriptstyle j(\lambda)$};
 \draw (b) edge[latex-] (c);
 \draw (f) edge[latex-] (e);
 \draw (e) edge[latex-] (d);
 \draw (c) edge[-latex] (d);
\end{tikzpicture}
$$
Careful calculation of the sign shows that those two terms are cancelled with terms given from choosing neighboring edges in $P(\lambda)$. The
two last terms which does not have corresponding neighbour are zero because they have external vertex which is not target.
\end{proof}

Equation \eqref{eq:map1}, and Lemmas \ref{lem:map3} and \ref{lem:map4} imply that
$$
\Phi(d(\Gamma))\sim
d_C(\Phi(\Gamma))
\sim d_C(\Phi(\Gamma))+
\sum_{\tau\in F(\Gamma)}d_{EC}\left(\Phi_{\tau}(\Gamma)\right)+
\sum_{\tau\in F(\Gamma)}d_{ED}\left(\Phi_{\tau}(\Gamma)\right)=
d_C(\Phi(\Gamma))+d_E(\Phi(\Gamma))=d(\Phi(\Gamma)).
$$

After taking coinvariants this implies that
$$
\Phi(d(\Gamma))+\Phi(\chi(\Gamma))=d(\Phi(\Gamma))
$$
for each $\Gamma\in\HGS_{n}$. Hence, $\Phi:(\HGS_{n},d)\to (\HOGS_{n+1},d)$ is a map of complexes.
\end{proof}

After (anti-)symmetrizing external vertices this map induces the map
$$
\Phi: \left(\HG_{m,n},d\right)\to\left(\HOG_{m,n+1},d\right).
$$

\subsubsection{The dual map}\label{sec:thedualmap}
The dual of $\Phi$ is
$$
\F: \left(\HOGCS_{n+1},\delta\right) \to \left(\HGCS_{n},\delta\right),
$$
and the dual of its version for (anti-)symmetrized external vertices is
$$
\F: \left(\HOGC_{m,n},\delta\right)\to\left(\HGC_{m,n+1},\delta\right).
$$
Let us describe these maps combinatorially, to see that they are relatively simple and straightforward to compute.

\begin{defi}
Let $\Gamma\in\HOGC_{m,n}$ be a single term graph. We call it a \emph{forest graph} if all its internal vertices that are at least 3-valent have exactly 1 outgoing edge.
\end{defi}

\begin{lemma}
\label{lem:fg}
Let $\Gamma\in\HOGC_{m,n}$ be a (single term) graph that is not a forest graph. Then $\F(\Gamma)=0$.
\end{lemma}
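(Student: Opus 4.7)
My plan is to prove the contrapositive by describing precisely which graphs can appear with nonzero coefficient in the image of the map $\Phi$, of which $\F$ is the dual. Since $\F$ is defined as the transpose of $\Phi$ composed with the isomorphism $\kappa$ of Proposition \ref{prop:sk}, the coefficient of any basis element $\Gamma' \in \HGCS_n$ in $\F(\Gamma)$ equals (up to sign) the coefficient of the skeleton graph $\kappa(\Gamma) \in \HOsGS_{n+1}$ in $\Phi(\Gamma')$. The task therefore reduces to verifying that when $\Gamma$ is not a forest graph, $\kappa(\Gamma)$ appears in $\Phi(\Gamma')$ for no $\Gamma' \in \HGS_n$.

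For this I would unpack the definition $\Phi(\Gamma') = \sum_{\tau \in F(\Gamma')} \Phi_\tau(\Gamma')$. In each summand, edges of the spanning forest $\tau$ carry type $\Ed$ oriented toward the external vertex at the root of their tree component, while non-forest edges become crossed edges $\Ess$. The key input is the standard fact about rooted trees: every non-root vertex has exactly one incident edge pointing toward the root. Applied to $\Phi_\tau(\Gamma')$, this immediately yields that every skeleton internal vertex has exactly one outgoing direct skeleton edge of type $\Ed$.

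The final step is to translate this condition back through $\kappa^{-1}$ to the complex $\HOGS_{n+1}$. There $\Ed$-skeleton edges correspond to single directed edges, while crossed skeleton edges unfold into $\dEd$-zigzags through auxiliary 2-valent sources; crucially, both skeleton endpoints of such a zigzag receive an \emph{incoming} edge from the middle source, contributing nothing to the outgoing count. Hence every at-least-3-valent internal vertex in any graph in the image of $\Phi$ has exactly one outgoing edge in the original graph, which is precisely the forest-graph condition. The lemma then follows by contraposition.

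I do not anticipate any real obstacle in executing this argument, since it is essentially a direct structural inspection of $\Phi$. The one point that requires care is the observation that 2-valent sources in $\Gamma$ are not constrained by the forest-graph condition: this is consistent with the fact that, under $\kappa$, such vertices are absorbed into crossed skeleton edges and hence never appear as skeleton internal vertices in the analysis above.
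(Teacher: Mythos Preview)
Your proposal is correct and follows essentially the same approach as the paper's proof: both argue by duality that it suffices to show every graph appearing in $\Phi(\gamma)$ is a forest graph, which follows from the rooted-tree fact that each internal vertex has a unique outgoing forest edge, together with the observation that crossed edges contribute only incoming edges at their $\geq 3$-valent endpoints. The paper compresses this into a single sentence (``one easily checks that every graph in the linear combination $\Phi(\gamma)$ is a forest graph''), whereas you have spelled out the structural verification in full, including the careful treatment of 2-valent sources under $\kappa$.
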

\begin{proof}
In the dual picture one easily checks that every graph in the linear combination $\Phi(\gamma)$ for $\gamma\in\HG_{m,n+1}$ is a forest graph. This leads to the result.
\end{proof}

If $\Gamma\in\HOGC_{m,n}$ is a forest graph, $\F(\Gamma)$ is a graph in $\HGC_{m,n+1}$ up to the sign obtained from $\Gamma$ by replacing each occurrence of $\dEd$ with a single edge and ignoring the edge directions.
Here are some examples:
\begin{align*}
\begin{tikzpicture}[scale=1]
	\node[int] (v1) at (0,0) {};
	\node[int] (v2) at (0,1) {};
	\node[int] (v3) at (-.5,0.5) {};
	\node[int] (v4) at (.5,0.5) {};
	\node[ext] (w) at (0,-0.7) {$\scriptstyle 1$};
\draw[-latex] (v3) edge (v2) edge (v1) (v4) edge (v2) edge (v1) (v1) edge (w) (v2) edge (v1);
\end{tikzpicture}
&
\quad\xrightarrow{F} \quad
\begin{tikzpicture}[scale=1]
	\node[int] (v1) at (0,0) {};
	\node[int] (v2) at (0,1) {};
	\node[ext] (w) at (0,-0.7) {$\scriptstyle 1$};
\draw (v1) edge (v2) edge[bend left] (v2) edge[bend right] (v2) edge (w) ;
\end{tikzpicture}
&
\begin{tikzpicture}[scale=1]
	\node[int] (v1) at (0,0) {};
	\node[int] (v2) at (0,1) {};
	\node[int] (v3) at (-.5,0.5) {};
	\node[int] (v4) at (.5,0.5) {};
	\node[ext] (w) at (0,-0.7) {$\scriptstyle 1$};
	\node[ext] (w2) at (1.2,0.5) {$\scriptstyle 2$};
\draw[-latex] (v3) edge (v2) edge (v1) (v2) edge (v4) (v1) edge (v4) 
(v1) edge (w) (v2) edge (v1) (v4) edge (w2);
\end{tikzpicture}
\quad\xrightarrow{F}\;
0 \quad \text{(not a forest graph).}
\end{align*}

\subsection{The proof of Theorem  \ref{thm:main}} 

In this subsection we prove Theorem \ref{thm:main} by proving its dual version:

\begin{prop}
\label{prop:main1}
The map 
$$
\Phi:  \left(\HGS_n,d\right)\to\left(\HOGS_{n+1},d\right)
$$
is a quasi-isomorphism.
\end{prop}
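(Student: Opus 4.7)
The plan is to invoke Proposition \ref{prop:sk} to replace the target by the isomorphic skeleton complex $\HOsGS_{n+1}$ and show the induced map $\Phi: \HGS_n \to \HOsGS_{n+1}$ is a quasi-isomorphism via a spectral sequence argument in the spirit of the non-hairy arguments of \cite{MultiOriented, MultiSourced, AssarMarko}.

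I would equip both complexes with the ascending filtration by the number $v$ of internal vertices. This filtration is preserved by all differentials: the edge-contraction on $\HGS_n$ strictly lowers $v$; on $\HOsGS_{n+1}$ the core part $d_C$ lowers $v$ while the type-change part $d_E$ preserves $v$; and $\Phi$ preserves $v$, since $\Phi_\tau$ only rewrites edge types without altering the set of internal vertices. Splitting by loop order $g$ (both sides split and $\Phi$ respects this), the at-least-3-valency of internal vertices in $\HGS_n$ bounds $v$ from above, while $\HOsGS_{n+1}$ is a product complex so the induced filtration is complete; in both cases the associated spectral sequences converge.

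On the $E^0$ page the left-hand differential vanishes while only $d_E$ survives on the right, so the proof reduces to showing that for each fixed $v$ the induced map
\[
E^0(\Phi):(\HGS_n|_v, 0) \longrightarrow (\HOsGS_{n+1}|_v, d_E)
\]
is a quasi-isomorphism. Because $d_E$ does not alter the underlying core graph, both complexes decompose as direct sums indexed by isomorphism classes of core graphs $\bar\Gamma$ (with the appropriate coinvariants and sign representations from \eqref{eq:VEHG} and \eqref{eq:VEHOsG}), and $\Phi$ respects this decomposition. The problem thus reduces to the purely combinatorial assertion that for each core graph $\bar\Gamma$, the $d_E$-cohomology of the space $V(\bar\Gamma)$ of admissible type assignments on $\bar\Gamma$ is one-dimensional and generated by the class of $\sum_{\tau \in F(\bar\Gamma)}\Phi_\tau(\bar\Gamma)$.

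The main obstacle will be this final combinatorial step. My approach is to run a secondary filtration on $V(\bar\Gamma)$ -- most naturally by the number of directed (non-crossed) edges, since $d_E$ strictly increases this count -- or equivalently to construct an acyclic matching/discrete Morse function on admissible type assignments whose critical cells are precisely the $\Phi_\tau(\bar\Gamma)$ for $\tau \in F(\bar\Gamma)$. One then verifies that the induced $d_E$-linking on critical cells couples these into a single one-dimensional cohomology class represented by $\sum_\tau \Phi_\tau(\bar\Gamma)$. The delicate issues are (i) ensuring the matching respects the admissibility conditions (no type-directed cycles, outgoing directed edge at every internal vertex, external vertices as type-targets), and (ii) bookkeeping the signs arising from the $\sym_2^{\times e}$-coinvariants and sign representations. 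This is the natural hair-laden adaptation of the non-hairy arguments of \cite[Section 4]{MultiSourced} and \cite[Section 3]{AssarMarko}, with external vertices now serving as the designated roots of the spanning forests.
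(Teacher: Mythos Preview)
Your framework matches the paper's proof almost exactly: pass to the skeleton complex via Proposition~\ref{prop:sk}, filter by the number of internal vertices, use the loop-order splitting for convergence, reduce to the associated graded where only $d_E$ survives on the target, and then decompose over core graphs (lifting to the level before taking $\sym_v\times(\sym_e\ltimes\sym_2^{\times e})$-invariants, since $d_E$ commutes with this action). So up to and including the reduction to
\[
\Phi:\ (\langle\Gamma\rangle,0)\ \longrightarrow\ (\langle\mathsf{O}\Gamma\rangle,d_E)
\]
for a fixed core graph $\Gamma$, you and the paper are on the same track.

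Where you diverge is in the ``main obstacle''. Your two suggestions are of different quality. The secondary filtration by the number of directed edges is unhelpful as stated: $d_E$ raises that count by exactly one, so the associated spectral sequence has $E^0$ with zero differential and $E^1$ equal to the original complex---you have gained nothing. The discrete Morse idea with critical cells indexed by \emph{all} spanning forests $\tau\in F(\Gamma)$ is plausible but leaves you with a nontrivial residual complex on $|F(\Gamma)|$ generators whose acyclicity (down to rank one) you would still have to establish; that is essentially a second theorem.

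The paper's device is more economical: it fixes a \emph{single} spanning forest, chosen as an ordered edge list $a_1,\dots,a_v$ with each initial segment already a forest, introduces an auxiliary fourth edge type $\ET$ (``thick''), and builds intermediate complexes $\langle\mathsf{O}\Gamma^i\rangle$ in which $a_1,\dots,a_i$ are forced thick. Each step map $f^i:\langle\mathsf{O}\Gamma^{i-1}\rangle\to\langle\mathsf{O}\Gamma^i\rangle$ sends $\Ess\mapsto 0$, $\Ed\mapsto\ET$, $\dE\mapsto\pm\ET$ on the edge $a_i$; a short case analysis (does the newly absorbed vertex already have an outgoing directed edge or not?) shows each $f^i$ is a quasi-isomorphism. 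The terminal complex $\langle\mathsf{O}\Gamma^v\rangle$ is visibly one-dimensional, and $f^v\circ\cdots\circ f^1\circ\Phi$ hits its generator because only the summand $\Phi_\tau(\Gamma)$ with $\tau=\{a_1,\dots,a_v\}$ survives. This avoids any global statement about the set of all spanning forests.

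In short: your reduction is correct, but for the endgame you should replace the vague Morse/secondary-filtration plan by the paper's one-forest-at-a-time thickening argument, which is both shorter and sign-transparent.
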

\begin{proof}
Using Proposition \ref{prop:sk} it is enough to prove that
$$
\Phi:  \left(\HGS_n,d\right)\to\left(\HOsGS_{n+1},d\right)
$$
is a quasi-isomorphism.

On its mapping cone we set up the spectral sequence on the number of vertices. Our complexes split into finite dimensional subcomplexes according to the loop number, hence the spectral sequence converges. It is therefore enough to prove the claim for the first differential of the spectral sequence.

Since the pieces of the differentials that contract edges lower the number of vertices, it is clear that on the first page of the spectral sequences there is the mapping cone of the map
$$
\Phi:  \left(\HGS_n,0\right)\to\left(\HOsGS_{n+1},d_E\right).
$$

These complexes are now direct sums of subcomplexes spanned by graphs with a fixed number of vertices and edges, so it is enough to show the claim for
$$
\Phi:  \left(\mV_v\mE_e\bar\mH_S\G_n,0\right)\to\left(\mV_v\mE_e\bar\mH_S\mO^{sk}\G_{n+1},d_E\right).
$$

Recall from \eqref{eq:VEHG} and \eqref{eq:VEHOsG} that both $\mV_v\mE_e\bar\mH_S\G_n$ and the skeleton complex $\mV_v\mE_e\bar\mH_S\mO^{sk}\G_{n+1}$ are spaces of invariants of the action of $\sym_v\times \left(\sym_e\ltimes \sym_2^{\times e}\right)$.
The action clearly commutes with the map $\Phi$. Since the edge differential $d_E$ does not change the number of vertices and edges, taking homology commutes with taking coinvariants of that action. Therefore, it is now enough to show the claim for
$$
\Phi:  \left(\bar\mV_v\bar\mE_e\bar\mH_S\G_n,0\right)\to\left(\bar\mV_v\bar\mE_e\bar\mH_S\mO^{sk}\G_{n+1},d_E\right).
$$

Let us pick up a particular (single term) graph $\Gamma\in\bar\mV_v\bar\mE_e\bar\mH_S\G_n$. Let $\langle\mO\Gamma\rangle$ be the subspace of $\bar\mV_v\bar\mE_e\bar\mH_S\mO^{sk}\G_{n+1}$ spanned by skeleton graphs with the core graph $\Gamma$.

The map $\Phi$ is defined such that $\Phi(\Gamma)\in\langle\mO\Gamma\rangle$. Also, differential $d_E$ acts within particular subspace $\langle\mO\Gamma\rangle$. Therefore, we can split the map as a direct sum and it is enough to prove the clam for
\begin{equation}
\label{eq:hreduced}
\Phi:  \left(\langle\Gamma\rangle,0\right)\to
\left(\langle\mO\Gamma\rangle,d_{E}\right),
\end{equation}
for every $\Gamma\in\bar\mV_v\bar\mE_e\bar\mH_S\G_n$.

In order to prove that, let us choose $v$ edges in $\Gamma$, say $a_1,\dots,a_{v}$. Let $F(a_1,\dots,a_i)$ be the sub-graph of $\Gamma$ that includes those edges, all external vertices and all necessary internal vertices. We require that for every $i=1,\dots,v$ the sub-graph $F(a_1,\dots,a_i)$ is a forest. Recall that in a forest, every connected component has exactly one external vertex. Clearly, $F(a_1,\dots, a_{v})$ is a spanning forest.

For every $i=0,\dots,v$, we form a graph complex $\langle\mO\Gamma^i\rangle$ as follows: it is spanned by graphs with a core graph $\Gamma$ with attached edge types from $\bar\sigma:=\{\Ed[n],\dE[n],\Ess[n-1],\ET[n]\}$
such that:
\begin{itemize}
\item edges $a_1,\dots, a_i$ have type $\ET$, and other edges have other types;
\item no (internal or external) vertex in the forest $F(a_1,\dots,a_i)$ has a neighbouring edge of type $\Ed$ or $\dE$ heading away from it;
\item every internal vertex outside the forest $F(a_1,\dots,a_i)$ has a neighbouring edge of type $\Ed$ or $\dE$ heading away from it (it is not a sink);
\item there are no cycles along arrows on edges of type $\Ed$ and $\dE$.
\end{itemize}

Examples of graphs in $\langle\mO\Gamma^i\rangle$ are shown in Figure \ref{fig:SGamma}.

\begin{figure}[H]
$$
\begin{tikzpicture}[scale=1.4]
 \node[red,int] (a) at (0,0) {};
 \node[red,int] (b) at (1,0) {};
 \node[int] (c) at (1,1) {};
 \node[int] (d) at (0,1) {};
 \node[red,int] (a1) at (-.5,-.5) {};
 \node[red,int] (b1) at (1.5,-.5) {};
 \node[int] (c1) at (1.5,1.5) {};
 \node[red,int] (d1) at (-.5,1.5) {};
 \node[red,int] (x) at (-1,.5) {};
 \node[int] (y) at (1.5,.5) {};
 \node[int] (z) at (2.2,.5) {};
 \node[red,ext] (e1) at (-1.6,.5) {\color{black}1};
 \node[red,ext] (e2) at (-.9,-.9) {\color{black}2};
 \node[red,ext] (e3) at (1.9,-.9) {\color{black}3};
 \node[red,ext] (e4) at (1.9,1.9) {\color{black}4};
 \draw (a) edge[red,very thick] (b);
 \draw (b) edge[crossed,->] (c);
 \draw (c) edge[crossed,->] (d);
 \draw (d) edge[-latex] (a);
 \draw (a1) edge[crossed,->] (b1);
 \draw (b1) edge[latex-] (y);
 \draw (y) edge[latex-] (c1);
 \draw (b1) edge[crossed,->] (z);
 \draw (z) edge[latex-] (c1);
 \draw (y) edge[latex-] (z);
 \draw (c1) edge[crossed,->] (d1);
 \draw (d1) edge[crossed,->] (a1);
 \draw (a) edge[red,very thick] (a1);
 \draw (b) edge[crossed,->] (b1);
 \draw (c) edge[-latex] (c1);
 \draw (d) edge[-latex] (d1);
 \draw (a1) edge[crossed,->] (x);
 \draw (x) edge[red,very thick] (d1);
  \draw (x) edge[red,very thick] (e1);
 \draw (a1) edge[red,very thick] (e2);
 \draw (b1) edge[red,very thick] (e3);
 \draw (c1) edge[-latex] (e4);
\end{tikzpicture}
$$
\caption{\label{fig:SGamma}
An example of graph in $\langle\mO\Gamma^i\rangle$, with $\Gamma$ as in Figure \ref{fig:span}. The forest $F(a_1,a_2,a_3,a_4,a_5,a_6)$ is depicted red.}
\end{figure}
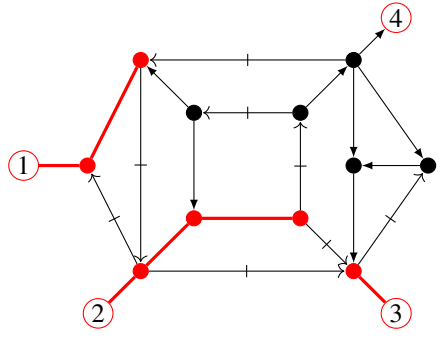

The differential on $\langle\mO\Gamma^i\rangle$ is the edge differential $d_E$ induced by
$$
\Ess\mapsto\Ed-(-1)^n\dE,
$$
as usual. If a resulting graph does not fulfil the conditions above, it is considered zero. Note that thick edges are not effected by the differential.

It is straightforward to check that
\begin{equation}
\left(\langle\mO\Gamma^0\rangle,d_E\right)=
\left(\langle\mO\Gamma\rangle,d_E\right).
\end{equation}
Also, it holds that $\langle\mO\Gamma^{v}\rangle $ is one dimensional, spanned by the graph with edges $a_1,\dots,a_{v}$ of type $\ET$ and other edges of type $\Ess$.

For every $i=1,\dots,v$, there is a map
\begin{equation}
f^i:\langle\mO\Gamma^{i-1}\rangle\rightarrow\langle\mO\Gamma^i\rangle
\end{equation}
that only change the type of the edge $a_i$ as
\begin{equation}
\Ess\mapsto 0,\quad
\Ed\mapsto\ET,\quad
\dE\mapsto(-1)^{n+1}\ET,
\end{equation}
where forbidden graphs are considered zero.

\begin{lemma}
For every $i=1,\dots,v$ the map $f^i:\langle\mO\Gamma^{i-1}\rangle\rightarrow\langle\mO\Gamma^i\rangle$ is a quasi-isomorphism.
\end{lemma}
\begin{proof}
The essential difference between $\langle\mO\Gamma^{i-1}\rangle$ and $\langle\mO\Gamma^{i}\rangle$ is in the edge $a_i$, it has to be of type $\ET$ in $\langle\mO\Gamma^{i}\rangle$, and it is of another type in $\langle\mO\Gamma^{i-1}\rangle$.
Since $f^i$ does not change types of other edges, it splits as a direct sum of maps between complexes with fixed types of other edges
$$
f^i_{fix}:\langle\mO\Gamma^{i-1}_{fix}\rangle\rightarrow\langle\mO\Gamma^i_{fix}\rangle
$$
where $\langle\mO\Gamma^{i-1}_{fix}\rangle$ and $\langle\mO\Gamma^{i}_{fix}\rangle$ are sub-complexes spanned by graphs with fixed types of all edges other than $a_i$. It is enough to show that each $f^i_{fix}$ is a quasi-isomorphism.

Here, depending on the choice of fixed edge types, the conditions of the complex can disallow some possibilities for the edge $a_i$ in both $\langle\mO\Gamma^{i-1}_{fix}\rangle$ and $\langle\mO\Gamma^{i}_{fix}\rangle$. We list all cases, showing that the map is a quasi-isomorphism in all of them. Let the vertex that is in the forest $F(a_1,\dots,a_{i})$ but not in the forest $F(a_1,\dots,a_{i-1})$ be called $x_i$.
\begin{enumerate}
\item If there is a vertex in the forest $F(a_1,\dots,a_{i-1})$ that has a neighbouring edge of type $\Ed$ or $\dE$ heading away from it, or there is a vertex outside the forest $F(a_1,\dots,a_{i})$ that does not have a neighbouring edge of type $\Ed$ or $\dE$ heading away from it, or there is a cycle along arrows on edges of type $\Ed$ or $\dE$ outside the forest $F(a_1,\dots,a_{i})$, both $\langle\mO\Gamma^{i-1}_{fix}\rangle$ and $\langle\mO\Gamma^{i}_{fix}\rangle$ are zero complexes and the map is clearly a quasi-isomorphism.
\item If the conditions of (1) do not hold and $x_i$ has a neighbouring edge of type $\Ed$ or $\dE$ heading away from it, the edge $a_i$ (that goes from a vertex in the forest $F(a_1,\dots,a_{i-1})$ towards $x_i$) can have types $\Ed$ or $\Ess$ in $\langle\mO\Gamma^{i-1}_{fix}\rangle$, making the complex acyclic. In $\langle\mO\Gamma^{i}_{fix}\rangle$, no type is allowed, so it is again the zero complex. Therefore, the map is again a quasi-isomorphism.
\item If the conditions of (1) do not hold and $x_i$ does not have a neighbouring edge of type $\Ed$ or $\dE$ heading away from it, the edge $a_i$ must have type $\Ed$ in $\langle\mO\Gamma^{i-1}_{fix}\rangle$. In $\langle\mO\Gamma^{i}_{fix}\rangle$ that edge must have type $\ET$, making the map an isomorphism. Thus, it is also a quasi-isomorphism.
\end{enumerate}
\end{proof}

The lemma implies that
\begin{equation}
f:=f^{v}\circ\dots\circ f^1:
\langle\mO\Gamma\rangle\rightarrow\langle\mO\Gamma^{v}\rangle
\end{equation}
is a quasi-isomorphism.

\begin{lemma}
\label{lem:QIfh}
The map $f\circ \Phi:\langle\Gamma\rangle\rightarrow\langle\mO\Gamma^{v}\rangle$ is a quasi-isomorphism.
\end{lemma}
\begin{proof}
Both complexes are one-dimensional, so we only need to check that $f\circ \Phi\neq 0$.
The left-hand side complex has a generator $\Gamma$. It holds that
$$
f\circ \Phi(\Gamma)=
f\left(\sum_{\tau\in F(\Gamma)}\Phi_{\tau}(\Gamma)\right).
$$
The map $\Phi_{\tau}$ gives edges in $E(\tau)$ type $\Ed$ or $\dE$, and type $\Ess$ to the other edges. After that, the map $f=f^1\circ\dots\circ f^{v}$ kills all graphs with any of edges $a_1,\dots,a_{v}$ being of type $\Ess$. Therefore, $f\circ h_{x,\tau}$ is non-zero only if the forest $\tau$ consist exactly of the edges $a_1,\dots,a_{v}$. Let us call this forest $T$. So
\begin{equation}
f\circ \Phi(\Gamma)=
f\left(\Phi_{T}(\Gamma)\right).
\end{equation}
It is clearly the generator of $\langle\mO\Gamma^{v}\rangle$, and therefore non-zero.
\end{proof}

Since $f$ and $f\circ \Phi$ are quasi-isomorphism, it follows that $\Phi$ is also a quasi isomorphism, what was to be demonstrated.
\end{proof}

The following corollary is now straightforward.

\begin{cor}
\label{cor:main1}
The induced map 
$$
\Phi:  \left(\HG_{m,n},d\right)\to\left(\HOG_{m,n+1},d\right)
$$
is a quasi-isomorphism.
\end{cor}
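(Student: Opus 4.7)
The plan is to derive Corollary \ref{cor:main1} directly from Proposition \ref{prop:main1} by a standard invariants-commute-with-cohomology argument, using that we work over a field of characteristic zero.

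First I would verify that the map $\Phi:\HGS_{n}\to \HOGS_{n+1}$ constructed in Subsection \ref{ss:map} is natural in $S$, i.e., $\sym_s$-equivariant when we identify $S=\{1,\dots,s\}$ and let $\sym_s$ permute the external-vertex labels. This is immediate from the combinatorial description: $\Phi$ sums over spanning forests and orients tree edges toward the unique external vertex in each component, and both constructions are manifestly equivariant under relabelling of external vertices. Consequently $\Phi$ tensored with the appropriate one-dimensional representation ($\Q[-m]^{\otimes s}$ or $\Q[-m]^{\otimes s}\otimes \sgn_s$, according to the parity of $m$) is an equivariant chain map.

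Next I would take $\sym_s$-invariants. Since $\sym_s$ is a finite group and $\K$ has characteristic zero, the functor of $\sym_s$-invariants is exact and in particular commutes with taking cohomology. Applying this to the quasi-isomorphism of Proposition \ref{prop:main1} yields that the induced map
\[
\HGs_{m,n} \to \HOGs_{m,n+1}
\]
is a quasi-isomorphism for each $s\geq 1$. Taking the direct sum over $s>0$ preserves quasi-isomorphisms, so
\[
\Phi:\HG_{m,n}=\bigoplus_{s>0}\HGs_{m,n}\longrightarrow \bigoplus_{s>0}\HOGs_{m,n+1}=\HOG_{m,n+1}
\]
is a quasi-isomorphism as claimed.

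There is essentially no obstacle here beyond the bookkeeping just described; the only point that requires a moment's care is matching degree shifts and sign representations on the two sides so that the equivariance really holds with the chosen conventions. Since Proposition \ref{prop:main1} has already been established for each fixed $S$ and $\Phi$ is constructed in a manifestly functorial manner in $S$, the corollary then follows at once.
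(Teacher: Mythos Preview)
Your argument is correct and is precisely the ``straightforward'' step the paper has in mind: the paper gives no explicit proof of the corollary beyond the remark that it follows immediately from Proposition~\ref{prop:main1}, and your equivariance-plus-exactness-of-invariants argument is exactly how one unpacks that remark.
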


Using Proposition \ref{prop:incl} we get the following corollary. It has already been shown as part of \cite[Theorem 1.1.]{AssarMarko}.

\begin{cor}
There is an explicit quasi-isomorphism
$$
\left(\HG_{n,n},d\right)\to\left(\OG_{n+1},d_0\right).
$$
\end{cor}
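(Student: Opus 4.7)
The plan is to obtain the desired quasi-isomorphism as the composition of two quasi-isomorphisms that have already been established earlier in the paper, namely Corollary \ref{cor:main1} and the dual of Proposition \ref{prop:incl}. Concretely, specializing Corollary \ref{cor:main1} to $m=n$ yields the explicit map
\[
\Phi : \left(\HG_{n,n},d\right) \to \left(\HOG_{n,n+1},d\right),
\]
which is a quasi-isomorphism. On the other hand, Proposition \ref{prop:incl} with $n$ replaced by $n+1$ states that the inclusion
\[
\left(\OGC_{n+1}[-1],\delta_0\right) \hookrightarrow \left(\HOGC_{n,n+1},\delta\right),
\]
obtained by attaching a hair at each sink of a non-hairy oriented graph, is a quasi-isomorphism.

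Next I would dualize. Both sides split as products over loop order into finite-dimensional pieces in each degree, so passing to the predual gives a quasi-isomorphism of chain complexes
\[
p : \left(\HOG_{n,n+1},d\right) \to \left(\OG_{n+1},d_0\right)
\]
(with a conventional degree shift that we absorb into the definition of the predual). Combinatorially, $p$ simply removes each hair attached to a sink, and sends to zero any graph not of this form. Composing, one obtains the explicit quasi-isomorphism
\[
p \circ \Phi : \left(\HG_{n,n},d\right) \to \left(\OG_{n+1},d_0\right),
\]
which proves the corollary.

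There is no real obstacle here beyond bookkeeping: the only step requiring any care is the dualization, where one must verify that dualizing a quasi-isomorphism of loop-order-filtered complexes whose loop-order pieces are bounded and finite-dimensional produces again a quasi-isomorphism. This is standard, since dualization is exact on finite-dimensional vector spaces and commutes with finite direct sums, and one can then assemble the result over loop orders. Once this formal point is in place, the statement is immediate by composition.
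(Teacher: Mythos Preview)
Your proposal is correct and follows essentially the same route as the paper: the corollary is stated immediately after Corollary~\ref{cor:main1} and Proposition~\ref{prop:incl}, and the paper simply says it is obtained ``using Proposition~\ref{prop:incl}'' (and notes it was already shown in \cite{AssarMarko}), leaving the dualization and composition implicit. Your write-up just makes explicit the dualization step and the finite-dimensionality-per-loop-order justification for why the dual of the inclusion in Proposition~\ref{prop:incl} remains a quasi-isomorphism.
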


\subsection{The proof of Theorem \ref{thm:mainlie}}

Corollary \ref{cor:main1} implies that also the dual map
$$
\F:  \left(\HOGC_{m,n+1},\delta\right)\to\left(\HGC_{m,n},\delta\right)
$$
is a quasi-isomorphism. On this complexes we have Lie algebra structures.

\begin{prop}
The map $\F:  \left(\HOGC_{m,n+1},\delta\right)\to\left(\HGC_{m,n},\delta\right)$ respects the Lie algebra structures, i.e.\
$$
\F([\Gamma,\Gamma'])=[\F(\Gamma),\F(\Gamma')]
$$
for every $\Gamma,\Gamma'\in \HOGC_{m,n+1}$.
\end{prop}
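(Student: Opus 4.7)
The plan is to compare the two sides term by term using the explicit combinatorial description of $\F$ from Section \ref{sec:thedualmap}: $\F$ annihilates graphs that are not forest graphs, and on a forest graph it contracts every zigzag $\dEd$ to a single undirected edge (and forgets the remaining edge directions).

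The first step is to show that both sides vanish unless both $\Gamma$ and $\Gamma'$ are forest graphs. For the right-hand side this is immediate from the definition of $\F$. For the left-hand side, I would observe that each insertion term in the bracket $[\Gamma,\Gamma']$ is obtained by removing one hair and redirecting its adjacent edge to an internal vertex $w$ of the other graph; this operation leaves the number of outgoing edges at every internal vertex unchanged (the target vertex $w$ merely gains an incoming edge). Hence any $\geq 3$-valent vertex of $\Gamma$ or $\Gamma'$ that violates the one-outgoing-edge condition continues to violate it in every insertion term, so $\F[\Gamma,\Gamma']=0$.

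Assuming now both arguments are forest graphs, I would index the bracket terms by pairs $(h,w)$, where $h$ is a hair of one argument and $w$ is an internal vertex of the other. If $w$ is a $2$-valent source of $\Gamma'$, attaching the new incoming edge turns it into a $3$-valent vertex with $2$ outgoing edges, destroying the forest condition; such terms are killed by $\F$. The surviving terms, those with $w$ being $\geq 3$-valent, are in canonical bijection with the terms of $[\F(\Gamma),\F(\Gamma')]$: the hairs of $\F(\Gamma)$ correspond bijectively to the hairs of $\Gamma$ (the zigzag collapse never removes external vertices, only relocates their attachment point), while the internal vertices of $\F(\Gamma')$ are precisely the $\geq 3$-valent internal vertices of $\Gamma'$. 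Under this bijection, one checks by direct inspection that the new directed edge $v\to w$ introduced by the $\HOGC$-insertion, after zigzag-collapse, reproduces exactly the new undirected edge introduced by the $\HGC$-insertion at the correct pair of vertices; this works because the collapse manifestly commutes with the attachment of a new internal edge at a vertex that $\F$ preserves.

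The main obstacle will be the sign bookkeeping. The Koszul sign $(-1)^{|\Gamma||\Gamma'|}$ matches on both sides because $\F$ is degree preserving. The finer signs arising from orderings of vertices and edges, edge orientations, and the (anti)symmetrization over hair labels require a term-by-term comparison. These signs are governed by the same mechanism as in the proof of Proposition \ref{prop:map}, where the compatibility of $\Phi$ with edge contractions is established: once compatible orderings on the inserted graph and on its $\F$-image are fixed, the insertion produces matching signs on both sides. I would invoke that sign computation, suitably adapted from ``contract an edge'' to ``attach a new edge at a specified vertex,'' rather than redo it from scratch.
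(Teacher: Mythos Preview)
Your argument is correct and follows essentially the same route as the paper: reduce to forest graphs, observe that insertion at a $2$-valent source destroys the forest condition so only insertions at $\geq 3$-valent (skeleton) vertices survive, and then match these surviving terms bijectively with the bracket terms on the $\HGC$ side. Your write-up is in fact more detailed than the paper's, which simply asserts that ``connecting hairs before and after the action of $\F$ yields the same result'' and leaves the sign computation to the reader, just as you do.
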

\begin{proof}
It is enough the check the relation for single term graphs $\Gamma$ and $\Gamma'$. Recall from Lemma \ref{lem:fg} that $F(\Gamma)=0$ unless $\Gamma$ is a forest graph.
It is easy to see that if either of $\Gamma$ or $\Gamma'$ is not a forest graph, neither is its Lie bracket $[\Gamma,\Gamma']$. So it is enough to check the relation for forest graphs $\Gamma$ and $\Gamma'$.

In constructing $[\Gamma,\Gamma']$ a hair from one graph can connect to any vertex from another graph. But if the hair is connected to a 2-valent vertex of the form $\dEd$ that comes from a skeleton edge $\Ess$, the resulting graph is not a forest graph, so it is sent to zero after acting by $\F$. Therefore, to prove the relation, we need to consider only cases where a hair is connected to at least 3-valent vertices. They come from skeleton vertices.

It is now clear that connecting hairs before and after the action of $\F$ yields the same result. Careful calculation of the sign is left to the reader.
\end{proof}

For the next assertion of Theorem \ref{thm:mainlie} we need to check that $\F(m_1)=m_0$. The only term in the series of graphs
\[
m_1 =
\sum_{k\geq 2} 
\frac 2 {k!}
\underbrace{
\begin{tikzpicture}[baseline=-.65ex]
\node[int] (v) at (0,.5) {};
\node[ext] (h1) at (-.7,-.2) {\;};
\node[ext] (h2) at (-.3,-.2) {\;};
\node[ext] (h3) at +(.7,-.2) {\;};
\draw (v) edge[-latex] (h1) edge[-latex] (h2)  edge[-latex] (h3) +(.25,-.7) node {$\scriptstyle \dots$};
\end{tikzpicture}
}_{k\times}
\in \HOGC_{n,n+1},
\]
that is a forest graph is $
\begin{tikzpicture}[baseline=-.65ex,scale=.5]
 \node[ext] (a) at (0,0) {\;};
 \node[int] (b) at (1,0) {};
 \node[ext] (c) at (2,0) {\;};
 \draw (a) edge[latex-] (b);
 \draw (b) edge[-latex] (c);
\end{tikzpicture}
$,
and it is sent to $m_0=
\begin{tikzpicture}[baseline=-.65ex,scale=.5]
 \node[ext] (a) at (0,0) {\;};
 \node[ext] (c) at (1,0) {\;};
 \draw (a) edge (c);
\end{tikzpicture}
$.

Finally we need to check that the diagram of complexes \eqref{equ:thmmainliecd} homotopy commutes.
We will do this by considering a one sided inverse $\phi$ to the lower horizontal arrow, that has first been introduced in \cite{TW2}.
\[
	\begin{tikzcd}
		\OGC_{n+1}[-1] \ar{r}{\simeq} \ar{d}{\simeq_{\geq 2}} & (\HOGC_{n,n+1}, \delta+[m_1,-]) \ar{d}{\simeq}\\
	\GC_n[-1] \ar{r}{\simeq_{\geq 2}} &  (\HGC_{n,n},\delta+[m_0,-]) \ar[bend right]{l}{\phi}
	\end{tikzcd}	
\]
Concretely, for a graph $\Gamma\in \HGC_{n,n}$ of loop order $g$ we set 
\[
\phi(\Gamma) = 
\begin{cases}
0 & \text{if $\Gamma$ has $\geq 2$ hairs} \\
0 & \text{if $\Gamma$ has a single hair connected to a trivalent vertex} \\
\pm \frac{(\# neighbors)-2}{2g} (\Gamma- \text{hair}) & \text{if $\Gamma$ has a single hair}
\end{cases}\, ,
\]
where $(\Gamma- \text{hair})$ is the graph $\Gamma$ with the single hair removed and $(\# neighbors)$ is the number of neighbors of the vertex the hair connects to (not counting the hair as a neighbor).
The verification that this is indeed a one sided inverse to the map $\GC_n[-1] \to  \HGC_{n,n}$ we refer to \cite{TW2}, or leave it to the reader as an exercise.
Finally, one just has to note that the inner square in the above diagram commutes in loop orders $\geq 2$, using the description of the map $\OGC_{n+1}\to \GC_n$ of section \ref{sec:GCOGCcomparison}. Mind that the factor $\frac{1}{2g}$ appearing in the definition of $\phi$ is the reason for us introducing a similar (conventional) factor in \eqref{equ:GOGmapdef}, that was absent in \cite{MultiOriented,MultiSourced}
This finishes the proof of Theorem \ref{thm:mainlie}. \hfill \qed

\section{The map from the oriented to the ribbon graph complex}\label{sec:HOGCtoRGC}

In this section we shall discuss the connection of the graph complexes of the previous sections to the ribbon graph complex, introduced by Penner and Kontsevich.
We will start by recalling some definitions and constructions from \cite{MW}. 

\subsection{Recollections from \cite{MW}}

The main player in \cite{MW} is the ribbon graph properad $\RGra_0$. The space of operations $\RGra(r,s)$ with $s$ inputs and $r$ outputs is the space of linear combinations of connected ribbon graphs with the set of vertices identified with $[r]$ and the set of boundary components identified with $[s]$. A ribbon graph (or fat graph) is a graph with a prescripition of a cyclic ordering of the incident half-edges at each vertex. Thickening the graph, one obtains an oriented surface with some disks removed.
Here is an example ribbon graph in $\RGra_0(3,2)$:
\[
\begin{tikzpicture}
\node[ext] (v1) at (0,0) {$\scriptstyle 1$};
\node[ext] (v2) at (1,0) {$\scriptstyle 2$};
\draw (v1) edge (v2) edge[bend left=70] (v2) edge [bend right=70] (v2);
\node  at (0.5,0.2) {$\scriptstyle 1$};
\node  at (0.5,-0.2) {$\scriptstyle 2$};
\node  at (0.5,0.6) {$\scriptstyle 3$};
\end{tikzpicture}
\]
Here one should think of the edges being thickened to ribbons, which connect at the vertices in the indicated cyclic order.
 The properadic compositions are obtained by "connecting a vertex and a boundary component" in the sense that a vertex is deleted, and the incident edges are distributed along the boundary component in all planar possible ways.
We refer to \cite[section 4]{MW} for more combinatorial details, and also for the precise sign and degree conventions.
We just remark that a ribbon graph $\Gamma\in \RGra_0(r,s)$ with $k$ edges has cohomological degree $+k$ by covention, i.e., formally each edge has degree $+1$.

The important fact is that one has a map of properads
\[
\La\LieB \to \RGra_0,
\]
from the (degree shifted) Lie bialgebra properad, defined on the bracket and cobracket generator by the following formulas:
\begin{equation}\label{equ:LieBmapdef}
\begin{aligned}
\text{bracket:}
\begin{tikzpicture}
\node[ext] (v) at (0,0) {};
\draw (v) edge +(0,.4) edge +(-.3,-.3) edge (.3,-.3);
\end{tikzpicture}
&\mapsto
\begin{tikzpicture}
\node[ext] (v) at (0,0) {$\scriptstyle 1$};
\node[ext] (w) at (.5,0) {$\scriptstyle 2$};
\node at (.25,.3) {$\scriptstyle 1$};
\draw (v) edge (w);
\end{tikzpicture}\in \RGra_0(1,2) \\
\text{cobracket:}
\begin{tikzpicture}[yscale=-1];
\node[ext] (v) at (0,0) {};
\draw (v) edge +(0,.4) edge +(-.3,-.3) edge (.3,-.3);
\end{tikzpicture}
&\mapsto
\begin{tikzpicture}[every loop/.style={draw,-, looseness=10}]
\node[ext] (v) at (0,0) {$\scriptstyle 1$};
\node at (0,.3) {$\scriptstyle 1$};
\node at (.5,.3) {$\scriptstyle 2$};
\draw (v) edge[in=40, out=140, loop] (v);
\end{tikzpicture}\in \RGra_0(2,1)
\end{aligned}
\end{equation}
Mind that the bracket and cobracket generator both have degree $1$ and are symmetric (co)operations, due to the degree shift compared to the standard Lie bialgebra properad.
We will also define a different properad map we denote by $*:\La\LieB\to \RGra_0$, simply by sending the cobracket generator to zero,
\begin{equation}\label{equ:LieBstarmapdef}
\begin{aligned}
\ILieB &\xrightarrow{*} \RGra_0 \\
\text{bracket:}
\begin{tikzpicture}
\node[ext] (v) at (0,0) {};
\draw (v) edge +(0,.4) edge +(-.3,-.3) edge (.3,-.3);
\end{tikzpicture}
&\mapsto
\begin{tikzpicture}
\node[ext] (v) at (0,0) {$\scriptstyle 1$};
\node[ext] (w) at (.5,0) {$\scriptstyle 2$};
\node at (.25,.3) {$\scriptstyle 1$};
\draw (v) edge (w);
\end{tikzpicture}\in \RGra_0(1,2) \\
\text{cobracket:}
\begin{tikzpicture}[yscale=-1];
\node[ext] (v) at (0,0) {};
\draw (v) edge +(0,.4) edge +(-.3,-.3) edge (.3,-.3);
\end{tikzpicture}
&\mapsto 0\in \RGra_0(2,1)
\end{aligned}
\end{equation}
Furthermore, we may compose each of the above two maps with the projection $\La\LieB_\infty\to \La\LieB$.
Then one can consider the properadic deformation complex 
\[
 \Def(\La\LieB_\infty\xrightarrow{*} \RGra_0)
\]
(see \cite{MW, MWDef} for the definition and conventions), whose elements are essentially series of ribbon graphs with un-labelled vertices and boundary components.
The subcomplex 
\[
 \RGC_0 \subset \Def(\La\LieB_\infty\xrightarrow{*} \RGra_0)
\]
consisting of graphs with all vertices of valence $\geq 3$ is the Kontsevich-Penner ribbon graph complex, computing the compactly supported cohomology of the moduli spaces of curves, with antisymmetrized punctures.\footnote{In fact, the trivalence condition is not very important, and $H(\Def(\La\LieB_\infty \xrightarrow{*} \RGra_0))$ is only slightly larger than $H(\RGC_0)$.}
\[
H(\RGC_0) \cong \prod_{g\geq 0, n\geq 1 \atop 2g+n\geq 3}(H_c(\MM_{g,n})\otimes \Q[-1]^{\otimes n})_{\bbS_n}
\]
Combinatorially, elements of $\RGC_0$ can be seen as series of ribbon graphs, with unidentifiable vertices and boundary components of degree 0 and edges of degree 1.

The above properadic definition of the ribbon graph complex as a deformation complex has three interesting consequences explored in \cite{MW}:
\begin{enumerate}
	\item Since deformation complexes are dg Lie algebras, one finds that $\RGC_0$ carries a dg Lie structure.
	\item Instead of deforming the map $*$ of \eqref{equ:LieBstarmapdef} we can as well deform the map \eqref{equ:LieBmapdef} and consider the complex 
	\[
	\Def(\La\LieB_\infty \xrightarrow{} \RGra_0).
	\]  
	The former complex in particular is a deformation of the deformation complex of the map $*$, and from this one in particular obtains an additional differential $\Delta$ on the ribbon graph complex $\RGC_0$.
	Combinatorially, one can check that this differential acts on a ribbon graph by spitting a boundary component in two by adding one edge across the component, in all possible ways, schematically:
	\begin{equation}\label{equ:bridgelanddiff_illustration}
	\Delta\colon\,
\begin{tikzpicture}[scale=.5]
\node [int] (v4) at (1.5,0) {};
\node [int] (v5) at (0.46,1.43) {};
\node [int] (v2) at (-1.21,0.88) {};
\node [int] (v1) at (-1.21,-0.88) {};
\node [int] (v3) at (0.46,-1.43) {};
\draw  (v2) edge (v1);
\draw  (v1) edge (v3);
\draw  (v3) edge (v4);
\draw  (v4) edge (v5);
\draw  (v5) edge (v2);
\node  (v6) at (-2.02,1.47) {};
\node  (v7) at (0.77,2.38) {};
\node  (v8) at (2.5,0) {};
\node  (v9) at (0.77,-2.38) {};
\node  (v10) at (-2.02,-1.47) {};
\draw  (v2) edge (v6);
\draw  (v5) edge (v7);
\draw  (v4) edge (v8);
\draw  (v3) edge (v9);
\draw  (v1) edge (v10);
\end{tikzpicture}
\mapsto 
\sum \,
\begin{tikzpicture}[scale=.5]
\node [int] (v4) at (1.5,0) {};
\node [int] (v5) at (0.46,1.43) {};
\node [int] (v2) at (-1.21,0.88) {};
\node [int] (v1) at (-1.21,-0.88) {};
\node [int] (v3) at (0.46,-1.43) {};
\draw (v1) edge (v4);
\draw  (v2) edge (v1);
\draw  (v1) edge (v3);
\draw  (v3) edge (v4);
\draw  (v4) edge (v5);
\draw  (v5) edge (v2);
\node  (v6) at (-2.02,1.47) {};
\node  (v7) at (0.77,2.38) {};
\node  (v8) at (2.5,0) {};
\node  (v9) at (0.77,-2.38) {};
\node  (v10) at (-2.02,-1.47) {};
\draw  (v2) edge (v6);
\draw  (v5) edge (v7);
\draw  (v4) edge (v8);
\draw  (v3) edge (v9);
\draw  (v1) edge (v10);
\end{tikzpicture}
	\end{equation}
	This operation has been first considered by T. Bridgeland to our knowledge. A. C\v ald\v araru's Conjecture (see Conjecture \ref{conj:caldararu} below) states that the complex $(\RGC_0, \delta+\Delta)$ computes the compactly supported cohomology of the moduli spaces without maked points $\MM_{g}$.

	\item Since derivations of the properad $\La\LieB_\infty$ obviously map into the deformation complex above, and since those derivations can (homotopically) be identified with the graph complex $\HOGC_{0,1}$ we obtain the following maps of complexes
	\begin{equation}\label{equ:Def RGC GC}
	\begin{tikzcd}
	& \Def(\La\LieB_\infty\xrightarrow{\mathit{id}}\La\LieB_\infty) \ar{r} & \Def(\LieB_\infty \xrightarrow{} \RGra) \\
	\GC_{0}[-1] & \OGC_{1}[-1] \ar{l}{\simeq_{\geq 2}} \ar{r} \ar[hookrightarrow]{u}{\simeq_{\geq 2}} & (\RGC_0, \delta+\Delta) \ar[hookrightarrow]{u}{\simeq_{\geq 2}}
	\end{tikzcd}
	\end{equation}
	relating the "commutative" graph complex $\GC_0$ to the ribbon graph complex.
	All arrows respect a natural grading, which is given on the ribon graphs by the genus, and on the "ordinary" graphs by the loop order.
	The arrows labelled "$\simeq_{\geq 2}$" are quasi-isomorphisms in loop orders (resp. genus) $\geq 2$. 
	
	\item It is easy to check that the map \eqref{equ:LieBmapdef} in fact factors through the involutive Lie bialgebra properad $\ILieB$, and one can hence consider the deformation complexes 
	\begin{equation*}
	\Def(\La\ILieB_\infty \xrightarrow{} \RGra_0).
	\end{equation*}
	In this way one can obtain further algebraic structures on the Kontsevich-Penner ribbon graph complex, see \cite[section 4.3]{MW}, but this is not important for us in the present paper.
\end{enumerate}

\subsection{Ribbon graph complex with labelled punctures}
The Merkulov-Willwacher construction recalled in the previous subsection a priori only considers the version of the ribbon graph complex for the moduli space with "antisymmetrized" marked points.   
Here we now upgrade the properadic definition of the ribbon graph complex to the case of labelled boundary components.
Concretely, we will copy the version of the definition of the oriented hairy graph complex $\HHOGCS_1$ via properadic deformation complexes as in section \ref{sec:HHOGCSprop}.
To this end, we define the map of properads 
\[
* : \Omega(\AC^*)\to \RGra_0
\]
as the composition of properad maps (see section \ref{sec:HHOGCSprop} for the first map)
\[
\Omega(\AC^*) \to \La\LieB_\infty \to \La\LieB \to \RGra_0.
\]
We then consider the properadic deformation complex of that map, which reads
\begin{align*}
\Def\left(\Omega(\AC^*) \xrightarrow{*} \RGra_0\right)
&= 
\prod_{r,s} \Hom_{\bbS_r\times \bbS_s}\left(\AC^*(r,s), \RGra_0(r,s)\right)
\\\cong
\prod_{r,s} \RGra_0(r,s)^{\bbS_s}
,
\end{align*}
disregarding the differential for now. Elements of this complex can be understood as formal series of ribbon graphs with unidentifiable vertices, and boundary components labelled by numbers $1,\dots,r$. The differential $\delta$ can be seen to be acting by vertex splitting. There are again two gradings that are preserved by the differential, namely the grading by the number $r$ of boundary components and the genus of the ribbon graph. (I.e., the genus of the surface obtained by slightly thickening the graph.)
We will denote the subcomplex of genus $g$ and with $r$ components by 
\[
B_g\Def\left(\Omega(\AC^*) \xrightarrow{*} \RGra_0\right)^r	
\subset \Def\left(\Omega(\AC^*) \xrightarrow{*} \RGra_0\right).
\]
Then the version of the Kontsevich-Penner ribbon graph complex for genus $g$ surface with $r$ labelled points (with $2g+r\geq 3$) is the subcomplex
\[
B_g \RGC^S \subset B_g\Def\left(\Omega(\AC^*) \xrightarrow{*} \RGra_0\right)^r		
\]
consisting of the ribbon graphs all of whose vertices are at least trivalent. More concretely, one has that 
\[
H(B_g\RGC^{[r]})\cong H_c(\MM_{g,r})\otimes \Q[-1]^{\otimes r}.	
\]
Furthermore, we will again consider the pieces of various genera together and define the subcomplex
\[
\RGC^{[r]} := \prod_{g} B_g\RGC^{[r]}\subset \Def\left(\Omega(\AC^*) \xrightarrow{*} \RGra_0\right)^r.
\]
Finally, we shall take the notational liberty to label the $r$ boundary components by the elements of any finite set $S$ with $r$ elements instead, and use the the notation
\[
\RGC^{S} \cong \RGC^{[r]},
\]
for the resulting complex,
where we have fixed some bijection $S\cong [r]$.

\subsection{An extension of the map of Merkulov-Willwacher}
The properadic definition of the ribbon graph complex with labelled punctures makes it easy to extend the maps of diagram \eqref{equ:Def RGC GC} to this case. More concretely, just by functoriality of the properadic deformation complexes we have the map of complexes
\[
\Def(\Omega(\AC^*) \xrightarrow{*} \La\LieB_\infty)
\to
\Def(\Omega(\AC^*)\xrightarrow{*} \RGra_0)
\]
induced by the composition with the map \eqref{equ:LieBmapdef}.
This map induces maps on the subcomplexes we have considered above:
\[
\begin{tikzcd}
	& \Def(\Omega(\AC^*) \xrightarrow{*} \La\LieB_\infty)  \ar{r}
	& \Def(\Omega(\AC^*)\xrightarrow{*} \RGra_0)
\\
\HOGC^{[r]}_1 \ar[hookrightarrow]{r} & \HHOGC^{[r]}_1 \ar{r}{\text{induced}} \ar[hookrightarrow]{u}
& \RGC^{[r]}\ar[hookrightarrow]{u}
\end{tikzcd}.
\]

By Theorem \ref{thm:main} we furthermore have a quasi-isomorphism
\[
\HOGC^{[r]}_{1} \xrightarrow{\simeq} \HGC^{[r]}_{0}.
\]
Hence we obtain in particular a map 
\begin{equation}\label{equ:hairyMW}
H(\HGCS_{0}) \to H(\RGC^{S}).
\end{equation}
It is natural to raise the following conjecture.
\begin{conjecture}
	After identifying the genus $g$ part of $H(\RGC_{S,0})$ with $H_c(\MM_{g,S})[-|S|]$ the map \eqref{equ:hairyMW} agrees with the one obtained by Chan-Galatius-Payne \cite{CGP2} (see Theorem \ref{thm:CGP2}), possibly up to an overall conventional multiplicative constant.
\end{conjecture}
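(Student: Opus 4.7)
The plan is to recognize both the CGP map and our map \eqref{equ:hairyMW} as shadows of a single morphism of modular (co)operads, and to leverage the functoriality of the Feynman transform. On the one hand, $\HGCS_{0}$ has been realized in section \ref{sec:undirected_complexes} as (a piece of) the Feynman transform of the commutative modular operad $\Com_{0}$. On the other hand, the Kontsevich--Penner ribbon graph complex $\RGC^{S}$ is, up to degree shifts, a piece of the Feynman transform of the modular operad governing cyclic orderings at vertices, i.e.\ of the operad whose algebras encode the combinatorics of ribbon graphs. The canonical morphism of modular operads from the cyclic associative operad to $\Com_{0}$, forgetting the cyclic ordering, induces by functoriality of the Feynman transform a map of complexes $\HGCS_{0}\to \RGC^{S}$. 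The first task would be to check that this Feynman-transform map agrees (up to the conventional overall scalar) with the composition built in the previous subsection, which amounts to unwinding the explicit definitions of $\F$, of the inclusion $\HOGCS_{1}\hookrightarrow \HHOGCS_{1}$ and of the properadic morphism into $\RGra_{0}$, and then matching signs and prefactors.

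Next, one would identify the map on cohomology induced by this morphism with the inclusion of the weight zero summand $W_{0}H_{c}(\MM_{g,S})[-|S|]\hookrightarrow H_{c}(\MM_{g,S})[-|S|]$. The CGP quasi-isomorphism in Theorem \ref{thm:CGP2} is built from the tropical moduli space whose cells are indexed by stable graphs of genus $g$ with legs labelled by $S$. The Kontsevich--Penner identification $H(\RGC^{S})\cong H_{c}(\MM_{g,S})[-|S|]$ factors through the orbi-cell decomposition of $\MM_{g,S}$ given by Jenkins--Strebel quadratic differentials (or, equivalently, Penner's decorated Teichm\"uller theory), with cells indexed by ribbon graphs with legs labelled by $S$. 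Since both cell decompositions are indexed by graphs, and the forgetful map ``ribbon graph $\mapsto$ underlying graph'' implements precisely the forgetful modular operad map used above at the level of the indexing sets, one can try to show that the induced map on the associated spectral sequences by number of vertices is the natural inclusion of the top-weight piece. This uses essentially the same mixed Hodge-theoretic input as CGP's original proof of Theorem \ref{thm:CGP2}, so invoking part of their argument should suffice once the two cellular models have been explicitly connected.

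The main obstacle will be reconciling the two geometric models of $H_{c}(\MM_{g,S})$: the tropical/boundary complex used by Chan--Galatius--Payne and the Strebel/ribbon graph complex used by Merkulov--Willwacher. Although both are well established, an explicit chain-level comparison morphism compatible with the modular operadic structure does not seem to be written down in the literature, and carrying it out will likely require careful input from decorated Teichm\"uller theory combined with the Deligne--Mumford boundary stratification of $\bMM_{g,S}$. Once this comparison is in place, pinning down the overall multiplicative constant is elementary: it is enough to evaluate both maps on a single nontrivial class, for example on the trivalent one-loop hairy graph with $|S|=3$ in $\HGC_{0}^{[3]}$, where both sides can be computed explicitly and matched to the fundamental class of $\MM_{1,3}$.
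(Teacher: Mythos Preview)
The statement you are addressing is labelled a \emph{Conjecture} in the paper, and the paper provides no proof of it whatsoever. It is raised immediately after the construction of the map \eqref{equ:hairyMW} as something the authors believe to be true but do not establish; the only evidence offered is the explicit genus~$1$ computation that follows. There is therefore no ``paper's own proof'' against which to compare your proposal.

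Your proposal itself is not a proof but a strategic outline, and you acknowledge as much when you write that ``an explicit chain-level comparison morphism compatible with the modular operadic structure does not seem to be written down in the literature.'' That gap is precisely the content of the conjecture: what is missing is a geometric identification of the combinatorially defined map with the mixed-Hodge-theoretic inclusion of the weight-zero summand, and your plan does not supply the missing step so much as name it. A couple of the technical assertions in your outline would also need to be made precise before they could carry weight: for instance, realizing $\RGC^{S}$ as a piece of a Feynman transform of a cyclic-associative-type modular operad in a way that is strictly compatible with the properadic definition used in the paper is itself nontrivial and not established here, and the claim that the forgetful map ``ribbon graph $\mapsto$ underlying graph'' agrees on the nose with the composite $\HGCS_{0}\xleftarrow{\F}\HOGCS_{1}\hookrightarrow\HHOGCS_{1}\to\RGC^{S}$ is exactly what would have to be checked, not assumed. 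In short, your outline identifies a reasonable line of attack, but it does not close the gap that makes this a conjecture rather than a theorem.
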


\subsubsection{Simplest nontrivial example (genus $1$ case)}
To illustrate that the map \eqref{equ:hairyMW} is fairly explicit, let us work it out in the genus 1 situation.
The nontrivial cohomology classes in $H(\HGC^{[r]}_{0})$ are represented by linear combinations of "loop" graphs of the form 
\[
W_r =
\begin{tikzpicture}[scale=.5]
\node [int] (v4) at (1.5,0) {};
\node [int] (v5) at (0.46,1.43) {};
\node [int] (v2) at (-1.21,0.88) {};
\node [int] (v1) at (-1.21,-0.88) {};
\node [int] (v3) at (0.46,-1.43) {};
\draw  (v2) edge (v1);
\draw[dotted]  (v1) edge (v3);
\draw[dotted]  (v3) edge (v4);
\draw  (v4) edge (v5);
\draw  (v5) edge (v2);
\node  (v6) at (-2.02,1.47) {$\scriptstyle 1$};
\node  (v7) at (0.77,2.38) {$\scriptstyle 2$};
\node  (v8) at (2.5,0) {$\scriptstyle 3$};
\node  (v9) at (0.77,-2.38) {$\scriptstyle \cdots$};
\node  (v10) at (-2.02,-1.47) {$\scriptstyle r$};
\draw  (v2) edge (v6);
\draw  (v5) edge (v7);
\draw  (v4) edge (v8);
\draw  (v3) edge (v9);
\draw  (v1) edge (v10);
\end{tikzpicture}\, ,
\] 
or graphs obtained from $W_r$ by permuting the $r$ labels.
Hence is suffices to compute the image of $W_r$ in $H(\RGC^{[r]}_{0})$ for our purposes.
First it is easy to check that under the explicit map of Theorem \ref{thm:main} the graph $W$ is in the image of 
\[
W_r' =
\begin{tikzpicture}
\node[int] (v3) at (0:1) {};
\node[int] (v2) at (72:1) {};
\node[int] (v1) at (144:1) {};
\node[int] (vr) at (-144:1) {};
\node[int] (vd) at (-72:1) {};
\node[int] (v3i) at (36:1) {};
\node[int] (v2i) at (108:1) {};
\node[int] (v1i) at (180:1) {};
\node[] (vri) at (-108:1) {$\scriptstyle \cdots$};
\node[] (vdi) at (-36:1) {$\scriptstyle \cdots$};
\node (v3h) at (0:2) {$\scriptstyle 3$};
\node (v2h) at (72:2) {$\scriptstyle 2$};
\node (v1h) at (144:2) {$\scriptstyle 1$};
\node (vrh) at (-144:2) {$\scriptstyle r$};
\node (vdh) at (-72:2) {$\scriptstyle \cdots$};
\draw[-latex] (v3i) edge (v3) edge (v2)
(v2i)  edge (v1) edge (v2)
(v1i) edge (v1) edge (vr)
(vri) edge (vr) edge (vd)
(vdi) edge (vd) edge (v3)
(v3) edge (v3h) (v1) edge (v1h) (v2) edge (v2h) (vr) edge (vrh) (vd) edge (vdh)
;
\end{tikzpicture}
\, \in \HOGC^{[r]}_{1}.
\]
Mapping this to $\HHOGC^{[r]}_{1}\subset \Def(\LieB_\infty\to \RGra_0)$ we hence obtain 
\[
W_r'' = 
\begin{tikzpicture}
\node[int] (v3) at (0:1) {};
\node[int] (v2) at (72:1) {};
\node[int] (v1) at (144:1) {};
\node[int] (vr) at (-144:1) {};
\node[int] (vd) at (-72:1) {};
\node[int] (v3i) at (36:1) {};
\node[int] (v2i) at (108:1) {};
\node[int] (v1i) at (180:1) {};
\node[] (vri) at (-108:1) {$\scriptstyle \cdots$};
\node[] (vdi) at (-36:1) {$\scriptstyle \cdots$};
\node (v3h) at (0:2) {$\scriptstyle 3$};
\node (v2h) at (72:2) {$\scriptstyle 2$};
\node (v1h) at (144:2) {$\scriptstyle 1$};
\node (vrh) at (-144:2) {$\scriptstyle r$};
\node (vdh) at (-72:2) {$\scriptstyle \cdots$};
\draw[-latex] (v3i) edge (v3) edge (v2)
(v2i)  edge (v1) edge (v2)
(v1i) edge (v1) edge (vr)
(vri) edge (vr) edge (vd)
(vdi) edge (vd) edge (v3)
(v3) edge (v3h) (v1) edge (v1h) (v2) edge (v2h) (vr) edge (vrh) (vd) edge (vdh)
;
\node[] (v3ii) at (36:.2) {};
\node[] (v2ii) at (108:.2) {};
\node[] (v1ii) at (180:.2) {};
\node[] (vrii) at (-108:.2) {};
\node[] (vdii) at (-36:.2) {};
\draw[-latex] (v3ii) edge (v3i) (v1ii) edge (v1i) (v2ii) edge (v2i) (vrii) edge (vri) (vdii) edge (vdi);
\end{tikzpicture}
 +(\cdots),
\]
where $(\cdots)$ is a linear combination of graphs that contain vertices of valence $\geq 4$. Under the map to $\RGC^{[r]}_{0}$ those graphs are sent to zero by construction, hence only the leading term of $W_r''$ is relevant. To compute its image ribbon graph(s) we have to replace the trivalent vertices by ribbon "pairs of pants" as in \eqref{equ:LieBmapdef}, and then apply the properadic compositions in $\RGra$.
One quickly checks that this yields the ribbon graph
\[
W_r''' =
\begin{tikzpicture}[every loop/.style={draw, -, looseness=15}]
\node[int] (v3) at (0:1) {};
\node[int] (v2) at (72:1) {};
\node[int] (v1) at (144:1) {};
\node[int] (vr) at (-144:1) {};
\node[int] (vd) at (-72:1) {};
\draw (v2) edge (v3) edge (v1)
(vr)  edge (v1) edge[dotted] (vd)
(v3) edge[dotted] (vd);
\draw (v3) edge [in=30,out=160, loop] (v3);
\draw (v1) edge [in=174,out=304, loop] (v1);
\draw (v2) edge [in=102,out=232, loop] (v2);
\draw (vr) edge [in=246,out=376, loop] (vr);
\draw (vd) edge [in=318,out=448, loop] (vd);
\node at (-180:1.3) {$\scriptstyle 1$};
\node at (108:1.3) {$\scriptstyle 2$};
\node at (36:1.3) {$\scriptstyle 3$};
\node at (-108:1.3) {$\scriptstyle r$};
\node at (-36:1.3) {$\scriptstyle \cdots$};
\end{tikzpicture}
  \in \RGC_{[r],0}
\] 
as our final result.

\section{On the work of Chan-Galatius-Payne}\label{sec:CGP}
The goal of this section is to describe an independent, shortened proof of Theorems \ref{thm:CGP2} and \ref{thm:CGP1}, and connect those Theorems to the results of the previous sections.

\subsection{Getzler-Kapranov graph complex }
Let us recall some operadic facts about the moduli spaces of curves understood by Getzler-Kapranov \cite{GK}. First they note that the collection of the Deligne-Mumford compactified moduli spaces of stable curves $\bMM = \left\{ \bMM_{g,n} \right\}_{g,n}$ forms a modular operad. 
Hence the same is true for the corresponding chains operad. 
Similarly, they show that a version of the differential forms on the open moduli spaces $\MM = \left\{ \MM_{g,n} \right\}_{g,n}$ can be made into a (topological) modular cooperad, up to certain degree shifts. Finally, they show (see \cite[Proposition 6.11]{GK}) that both modular (co)operads are "Koszul dual" to each other, in the sense that they are related via the Feynman transform.
Furthermore, it was shown in \cite{GNPR} that the modular operad $\bMM$ is formal so that one can replace its (co)chains (co)operad by the (co)homology.
Combining both results hence motivates the following definition.
\begin{defi}
	The Getzler-Kapranov graph complex
	$$
	\GK=\FT \left(H_\bullet\left(\bMM\right)\right)
	$$
	is the Feynman transform of the homology operad of the Deligne-Mumford compactified moduli spaces of stable curves.
\end{defi}
Let us spell out the definition more explicitly.
The ($\kk$-)modular operad $\GK$ is a collection of dg vector spaces $\GK(g,n)$ of genus $g$ operations with $n$ inputs. We will sometimes also index our inputs with some set $S$ (so that $n=|S|$) and write $\GK(g,S)$ accordingly. 
The elements of $\GK(g,S)$ are series of (isomorphism classes of) $H^\bullet(\bMM)$-decorated genus $g$ oriented graphs. Explicitly, these are triples $\Gamma=(\gamma,\DD, \ori)$ as follows:
\begin{itemize}
	\item $\gamma$ is an undirected graph with external legs (or hairs) indexed by $S$.
	\item Each vertex $x$ of $\gamma$ is decorated by an element of $H^\bullet\left(\bMM_{g_x,\gstar(x)}\right)$, with $g_x$ a non-negative integer associated to $x$, and $\gstar(x)$ being the set of half-edges incident at $x$.
	We may collect all these decorations into one element
	\[
	\DD \in \otimes_G H^\bullet\left(\bMM\right)
	\]
	of the graph-wise tensor product of $H^\bullet\left(\bMM\right)$.
	\item The orientation $\ori$ is an ordering of the set of edges of $\Gamma$.
	\item We identify two such triples up to sign if they can be transformed into each other by applying an isomorphism of graphs, and by changing the orientations. The sign is obtained from the one on decorations and the permutation in the orientation change in the natural way.
	\item The cohomological degree is 
	\begin{equation}\label{equ:GKdegree}
	  |\DD| + e(\gamma) 
	\end{equation}
	with $e(\gamma)$ being the number of edges of $\gamma$.
	\item The genus of the graph is $g=\sum_x g_x+(\text{\#loops})$,
	\item We require that for each vertex $x$ we have 
	\[
	2g_x + |\gstar(x)| \geq 3.
	\]
\end{itemize}
The differential on $\Gamma$ is given by splitting vertices and producing one tadpole
\[
\delta \Gamma = \sum_{x\in V(\Gamma)} split_x(\Gamma)+ tadpole_x(\Gamma),
\]
with $split_x(\Gamma)$ being obtained by replacing $x$ by 2 vertices and applying the cooperadic cocomposition to the decoration at $x$, see \cite{GK}, and $tadpole_x(\Gamma)$ is obtained by using the modular cocomposition to produce a tadpole at $x$. (This latter operation reduces $g_x$ by one.)
It is clear from \eqref{equ:GKdegree} that this operation has cohomological degree $+1$.

\begin{defi}
	We define the weight grading on $\GK$ to be the grading by the total degree on the decorations $\DD$, i.e., 
	\[
	w(\Gamma) := |\DD|.
	\]
\end{defi}
It is clear that the weight grading is untouched by the differential and hence indeed defines a grading on $\GK$.

As outlined in the beginning of this subsection one may then extract the following result from the literature.
\begin{thm}[\cite{GK},\cite{GNPR}]\label{thm:GKHMM}
	The Getzler-Kapranov graph complex computes the compactly supported cohomology of the open moduli spaces,
	\[
	H^\bullet(\GK) \cong H^{\bullet}_c(\MM).
	\]
\end{thm}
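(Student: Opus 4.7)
The plan is to assemble the theorem from two classical results already cited in the text: the Getzler-Kapranov Feynman-transform duality between $\bMM$ and $\MM$, and the Gorsky-Navarro-Pascual-Roig formality of $\bMM$. Neither input needs to be re-proved; the content of the theorem is essentially that combining them in the obvious way is coherent.

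First, I would make precise the Getzler-Kapranov statement in the form needed here. They equip a suitable model of the compactly supported cochains $C^\bullet_c(\MM)$ (after the $\kk$-degree shift that puts one cohomological degree on each edge) with the structure of a $\kk$-modular cooperad, and they show in \cite[Proposition 6.11]{GK} that this cooperad is quasi-isomorphic to the Feynman dual of the chain modular operad $C_\bullet(\bMM)$ of the Deligne-Mumford compactifications. Unpacking this, there is a quasi-isomorphism of complexes
\[
\FT\bigl(C_\bullet(\bMM)\bigr)(g,S)\;\xrightarrow{\simeq}\;H^\bullet_c\bigl(\MM_{g,S}\bigr),
\]
natural in the groupoid of stable $\bbS$-modules and compatible with the grading by genus. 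By construction of the Feynman transform, elements on the left are series of graphs whose vertices are decorated by classes of $C_\bullet(\bMM)$, which matches the combinatorial description of $\GK$ except that vertices carry chains rather than cohomology classes.

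Second, I would invoke formality of the modular operad $\bMM$ from \cite{GNPR}: there is a zig-zag of quasi-isomorphisms of modular operads connecting $C_\bullet(\bMM)$ to its homology operad $H_\bullet(\bMM)$. The Feynman transform is a homotopy functor on $\kk$-modular operads: a quasi-isomorphism of modular operads whose components are bounded (or more generally split as graded vector spaces) induces a quasi-isomorphism between their Feynman transforms, by the standard spectral-sequence argument filtering by the number of vertices of graphs. Applying $\FT$ to the formality zig-zag hence yields
\[
\GK \;=\; \FT\bigl(H_\bullet(\bMM)\bigr)\;\xleftarrow{\simeq}\;\FT\bigl(C_\bullet(\bMM)\bigr)\;\xrightarrow{\simeq}\;H^\bullet_c(\MM),
\]
giving the desired isomorphism on cohomology. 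It is also immediate from this construction that the genus grading on both sides corresponds, since everything is assembled one genus at a time.

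The main obstacle, which is entirely bookkeeping, is matching the various sign and degree conventions: the paper uses cohomological conventions and places degree $+1$ on each edge (see \eqref{equ:GKdegree}), while \cite{GK} is written in mixed conventions with a determinantal twist $\kk$ on the edges, and \cite{GNPR} works with a multiplicative model of cochains. One has to check that after the $\kk$-twist the Feynman transform degree \eqref{equ:GKdegree} agrees with the natural cohomological degree on $H^\bullet_c(\MM)$, and that the tadpole summand in $\delta$ corresponds, under the duality, to the boundary-stratum contribution encoding the genus-reducing self-gluing in $\bMM$. Once these identifications are carried out (as is implicit in \cite{GK}), no further argument is required.
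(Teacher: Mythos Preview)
Your approach is essentially the same as the paper's: combine Getzler--Kapranov's Proposition 6.11 with the formality result of \cite{GNPR}, using that the Feynman transform is a homotopy functor. The one point the paper handles more carefully is the infinite-dimensionality of $C_\bullet(\bMM)$: they work with the \emph{topological} Feynman transform $\FT^{top}$ on nuclear Fr\'echet spaces, and pass through a finite-type minimal model $M$ of $H_\bullet(\bMM)$ (from \cite{GNPR}) so that $\FT^{top}M=\FT M$ and the ordinary and topological transforms can be compared. Your filtration argument for the homotopy invariance of $\FT$ tacitly assumes finite-type components, so you should either route through such a minimal model or invoke the topological version explicitly; otherwise the argument is complete. (Minor: the ``G'' in GNPR is Guill\'en Santos, not Gorsky.)
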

In fact, one should rather consider the right-hand side as the weight associated graded of the compactly supported cohomology. On the level of (degree-)graded vector spaces that we consider here this is irrelevant, however.
\begin{proof}
We recall the following notation and results from Getzler-Kapranov \cite{GK}.
First they describe a modular operad in nuclear Fr\'echet (NF) spaces $\CC_\bullet(\bMM, D)$.
They also describe a $\kk$-modular operad in nuclear DF-spaces $\pGrav$ modeling the (degree shifted) chains on $\MM$.
More concretely, 
\[
H_d(\pGrav(g,r)) = H_{d-6(g-1)-2r}(\MM_{g,r}).
\]
They furthermore extend the Feynman transform $\FT$ to a topological version $\FT^{top}$ defined on NF and nuclear DF spaces by replacing tensor products by their projectively completed versions and dually by strong duals. (This presents no major problem, since the Feynman transform only involves finite direct sums and finite tensor products of duals of the argument.)

Our starting point is then the result \cite[Proposition 6.11]{GK} that one has an isomorphism
\[
\FT^{top}_{\kk}\pGrav \cong \CC(\bMM, D),
\]
Next, we know from \cite{GNPR} that $H_\bullet(\bMM)$ has a minimal model (of finite type) $M$, and furthermore that $\bMM$ is a formal modular operad. This allows us to extend the above isomorphism to a zigzag of (quasi-)isomorphisms
\[
\FT^{top}_{\kk}\pGrav \cong \CC(\bMM, D)
\leftarrow M \to H_\bullet(\bMM).
\]
All objects can be considered as modular operads in NF spaces and the maps are continuous -- mind that the two objects on the right are of finite type.
Applying the topological Feynman transform again allows us to write the following zigzag of quasi-isomorphisms
\[
\pGrav \xleftarrow{\sim} \FT^{top}\FT^{top}_{\kk}\pGrav \cong \FT^{top} \CC(\bMM, D)
\leftarrow \FT^{top}M = \FT \to \FT H_\bullet(\bMM).
\]
This shows our result. Here we used the fact (see \cite[section 5]{GK}) that the Feynman transform is a homotopy functor and that $\FT^{(top)}$ is the homotopy inverse to $\FT^{(top)}_{\kk}$.
\end{proof}

\subsection{An independent proof of Theorems \ref{thm:CGP2} and \ref{thm:CGP1} }
We may show Theorems \ref{thm:CGP1} and \ref{thm:CGP2} together.
We use the complex $\GK$ to compute $H(\MM)$ via Theorem \ref{thm:GKHMM}
First it is clear that the weight 0 part is a direct summand of $\GK$.
But since $H_0(\bMM)=\Com^{mod}$ the weight zero part is given by the commutative graph complexes $\HGC^{S,mod}_0$ of section \ref{sec:undirected_complexes_modular}.
This is in turn quasi-isomorphic (in the stable situation) to the graph complex $\HGC^S_0$ by Lemma \ref{lem:HGCmod}.
\hfill\qed

Furthermore, we note that via the modular operad maps 
\[
\Com^{mod} \to H_\bullet(\bMM) \to \Com
\]
one has maps of complexes 
\begin{align*}
B_g\HGC^S_0 \to W_0\GK(g,S) \to B_g\HGC^{S,mod}_{0} \to B_g \HGC^S_{0} 
\end{align*}
see also section \ref{sec:undirected_complexes_modular}.


\subsection{Antisymmetrized Getzler-Kapranov complex and extra differential}
We may mimic the constructions of the hairy graph complexes $\HGCS_{n}$ and $\HGC_{m,n}$ of section \ref{sec:undirected_complexes} and define the following objects:
\begin{align*}
\HGK^S &:= \prod_{g\geq 1}  \GK(g,S) \otimes \sgn_S[-|S|]\\
\HGK_0 &:= \prod_{r\geq 1} (\HGK_r)^{S_r}.  
\end{align*}
As before we will denote the genus $g$ pieces by $B_g \HGK^S=\GK(g,S) \otimes \sgn_S[-|S|]$ and $B_g\HGK_0$.
Obviously, the cohomology of $\HGK_0$ may be identified with the compactly supported cohomology of the moduli spaces $\MM_{g,r}$ with antisymmetrized punctures, up to conventional degree shifts. We call the number $r$ of marked points also the number of hairs to unify the notation with the other graph complexes considered.

Now, forgetting one marked point on the compactified moduli spaces induces a natural map 
\[
\pi^*:H^\bullet(\bMM_{g,n}) \to H^\bullet(\bMM_{g,n+1}).
\]
We may apply this to every vertex to obtain a degree $+1$-map 
\begin{align*}
\Delta : \HGK_0 \to \HGK_0 \\
\Gamma \mapsto \sum_{x\in V(\Gamma)} (addhair)_x(\Gamma)
\end{align*}
where $(addhair)_x(\Gamma)$ applies $\pi^*$ to the decoration at vertex $x$.
To fix the signs, we declare that the newly added hair becomes the first in the ordering.
This increases the number of hairs $r$ by one.

\begin{lemma}
The operation $\Delta$ is compatible with the splitting differential $\delta$, i.e., $(\delta + \Delta)^2=0$.
\end{lemma}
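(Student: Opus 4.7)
The plan is to decompose $(\delta+\Delta)^2 = \delta^2 + (\delta\Delta+\Delta\delta) + \Delta^2$ and verify that each of the three summands vanishes separately. The first is automatic since $\delta$ is the Feynman transform differential on $\GK$ (and hence on the antisymmetrized version $\HGK_0$), so the content of the lemma reduces to the vanishing of $\Delta^2$ and of the graded commutator $\delta\Delta+\Delta\delta$, both of which I would reduce to local properties at individual vertices.

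For $\Delta^2=0$, I would expand $\Delta^2\Gamma$ as a sum over ordered pairs of vertices $(x,y)$ (possibly equal) at which the two successive hairs are attached. When $x=y$, the decoration at $x$ becomes the iterated pullback $\pi_a^*\pi_b^*\DD_x \in H^\bullet(\bMM_{g_x,\gstar(x)\sqcup\{a,b\}})$ along the two distinct forgetful morphisms. These morphisms commute, so the resulting class is invariant under the transposition of the two new marked points; the antisymmetrization imposed by the sign representation $\sgn_S$ then kills this contribution. When $x \neq y$, the ordered pairs $(x,y)$ and $(y,x)$ produce graphs that differ precisely by the transposition of the two new hair labels, and so differ by a sign after antisymmetrization; these two terms cancel.

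For $\delta\Delta+\Delta\delta = 0$, I would use the fact that both operators are sums of local vertex operations. Contributions where $\delta$ and $\Delta$ act at distinct vertices commute trivially, so the problem reduces to showing that $\pi^*$ graded-commutes with the modular cocompositions $split_x$ and $tadpole_x$ at a single vertex $x$. Geometrically, these cocompositions are dual to the gluing maps $\bMM_{g_1,S_1\sqcup\{*\}}\times\bMM_{g_2,S_2\sqcup\{*\}} \to \bMM_{g_x,\gstar(x)}$ and the self-gluing $\bMM_{g_x-1,\gstar(x)\sqcup\{*,*\}}\to\bMM_{g_x,\gstar(x)}$, both of which commute with the forgetful morphism dropping an extra marked point not involved in the gluing. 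This is part of the standard compatibility of the forgetful maps with the modular operad structure on $\bMM$, and dualizing on cohomology gives the required commutation of $\pi^*$ with the cocompositions. Since both $\delta$ and $\Delta$ have degree $+1$, graded commutativity is the same as anticommutativity, yielding $\delta\Delta+\Delta\delta=0$.

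The main obstacle, and the place where I expect the most care to be needed, is the sign bookkeeping. One must reconcile the convention that the hair added by $\Delta$ becomes the first in the ordering, the Koszul signs generated by the cocompositions in $\delta$, the sign shift arising from $[-|S|]$ when the number of hairs changes, and the antisymmetrization from $\sgn_S$. With a consistent choice of orientations, each of the cancellations above is controlled by the correct sign, and the three identities $\delta^2 = \Delta^2 = \delta\Delta+\Delta\delta = 0$ combine to give $(\delta+\Delta)^2=0$.
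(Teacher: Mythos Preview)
Your proposal is correct and follows essentially the same approach as the paper's proof sketch: both show $\Delta^2=0$ via the antisymmetrization over hairs and reduce $\delta\Delta+\Delta\delta=0$ to the commutativity of the forgetful maps on $\bMM$ with the modular operadic gluing maps (the split and tadpole compositions). Your treatment is in fact more detailed than the paper's, which only names the relevant commutative squares and leaves the sign verification implicit.
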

\begin{proof}[Proof sketch]
One can check that $\Delta^2=0$ and $\Delta\delta+\delta\Delta$ vanish separately. The first equation is immediate since we antisymmetrized over the markings.
The second boils down to the commutativity of the diagrams 
\[
\begin{tikzcd}
\bMM_{g_1,n_1+1} \times \bMM_{g_2,n_2+1} \ar{r}\ar{d}& \bMM_{g_1+g_2,n_1+n_2} \ar{d}\\
\bMM_{g_1,n_1} \times \bMM_{g_2,n_2+1} \ar{r} & \bMM_{g_1+g_2,n_1+n_2-1}
\end{tikzcd}
\]
and
\[
\begin{tikzcd}
\bMM_{g,n+1} \ar{r}\ar{d}& \bMM_{g+1,n-1} \ar{d}\\
\bMM_{g,n} \ar{r}& \bMM_{g+1,n-2}
\end{tikzcd}\, ,
\]
where the horizontal arrows are modular operadic compositions (appearing in $\delta$) and the vertical arrows are forgetful maps, forgetting one puncture (appearing in $\Delta$).
\end{proof}

It is also clear that the inclusion 
\[
\HGC_{0,0} \to \HGK_0
\]
intertwines the operation $\Delta:=[m_0,-]$ on $\HGC_{0,0}$ of the introduction and the operation $\Delta$ on $\GK_0$ just defined.

We also note that $\Delta$ is defined in the same way on $\HGK^\emptyset$, giving rise to a map of complexes
\[
\Delta : \HGK^\emptyset[-1] \to \HGK_0.
\]
and this is true for both the differentials $\delta$ and $\delta+\Delta$ with which we may equip the right-hand side.

Altogether we obtain the following commutative diagram of complexes
\begin{equation}\label{equ:HGCHGK cd}
\begin{tikzcd}
(\HGC_{0,0}, \delta+\Delta) \ar{r} & (\HGK_0, \delta+\Delta) \\
(\GC_0[-1] ,\delta) \ar{r} \ar{u}{\Delta} & \HGK^\emptyset[-1],\delta)\ar{u}{\Delta}
\end{tikzcd}\, .
\end{equation}
The left-hand colum is the weight 0 part of the right-hand column. Furthermore, note that operation $\Delta$ preserves the genus (in the sense of the loop order on the left), and so do the horizontal maps.
We furthermore know that the left-hand vertical arrow induces an isomorphism on cohomology in genera $\geq 2$.
This is also true for the right-hand vertical arrow:

\begin{thm}\label{thm:Delta cohomology}
The right-hand vertical arrow of \eqref{equ:HGCHGK cd} induces an isomorphism on cohomology in genera $g\geq 2$.
In particular, $(\HGK_0,\delta+\Delta)$ computes the compactly supported cohomology of the moduli space $\MM_g$ in these genera, up to a degree shift by one.
\end{thm}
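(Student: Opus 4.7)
The second claim is immediate from the first combined with Theorem \ref{thm:GKHMM}, so the plan is to show that $\Delta\colon (\HGK^\emptyset[-1],\delta)\to (\HGK_0,\delta+\Delta)$ is a quasi-isomorphism in each fixed genus $g\geq 2$. I would prove this by showing that the associated mapping cone $C$ is acyclic.

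First I would filter $C$ by the total number of hairs: $\HGK^\emptyset$ sits in filtration degree $0$, while the summand $(\HGK^{[r]})^{S_r}$ of $\HGK_0$ sits in filtration degree $r\geq 1$. Since the splitting/tadpole differential $\delta$ preserves the hair count while $\Delta$ strictly raises it by one, this is a bounded-below filtration of $C$ by subcomplexes, and the associated spectral sequence converges. On the $E^0$-page only $\delta$ acts, and applying the $(g,r)$-graded refinement of Theorem \ref{thm:GKHMM} sector by sector identifies the $E^1$-page with
\[
E^1 \;\cong\; H^\bullet_c(\MM_g) \;\oplus\; \bigoplus_{r\geq 1}\bigl(H^\bullet_c(\MM_{g,r})\otimes\sgn_r\bigr)^{S_r}[-r],
\]
equipped with an induced differential that combinatorially corresponds to the (anti)symmetrized sum of forgetful operations transported through the Getzler--Kapranov isomorphism.

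The final and hardest step will be to show this $E^1$-complex is acyclic for $g\geq 2$. I would approach this via the universal-curve fibration structure of the forgetful morphisms $\bMM_{g,r+1}\to \bMM_{g,r}$: a Leray-type argument, together with an analysis of how the boundary strata contribute, should identify the complex as an augmented cosimplicial object whose augmentation term $H^\bullet_c(\MM_g)$ precisely cancels against the remaining contributions. As a sanity check, the weight-zero slice reduces, via Proposition \ref{prop:GCHGC} combined with Theorems \ref{thm:CGP1} and \ref{thm:CGP2}, to the established identification of $W_0 H^\bullet_c(\MM_g)$, and the content of the theorem is the extension of this pattern through all weights. The main obstacle is genuinely geometric: the hair-filtration cleanly reduces everything to the $E^1$-acyclicity statement, but verifying the latter requires input from the geometry of $\bMM_{g,\bullet}$ beyond what the modular (co)operad formalism gives directly.
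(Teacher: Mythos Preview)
Your strategy of taking the mapping cone and running a spectral sequence is fine, but your choice of filtration sends you into territory that the paper explicitly leaves open. You filter by the number of hairs, so that $\delta$ survives on $E^0$ and you must then identify the $E^1$-differential induced by $\Delta$ after passing through the isomorphism $H(\HGK^{[r]},\delta)\cong H_c(\MM_{g,r})[-r]$. You assert that this induced map is ``the (anti)symmetrized sum of forgetful operations transported through the Getzler--Kapranov isomorphism''. But that is precisely Conjecture~\ref{conj:Delta interpretation}: the operation $\Delta$ is defined on decorations via $\pi^*:H^\bullet(\bMM_{g,n})\to H^\bullet(\bMM_{g,n+1})$, and whether this corresponds under the Feynman-transform identification to the forgetful pullback on $H_c(\MM_{g,\bullet})$ is not known within the paper's framework (the formality theorem of \cite{GNPR} covers the modular operad structure but not the forgetful maps). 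Even granting that identification, you concede that the acyclicity of your $E^1$-page is the ``hardest step'' and only gesture at a Leray/cosimplicial argument; this is a genuine gap, not a routine verification.

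The paper avoids both problems by filtering the mapping cone the other way, namely by the number of \emph{internal} (non-hair) edges. Then $\delta$ drops out on the associated graded and only $\Delta$ remains. Since $\Delta$ acts vertex-by-vertex (it adds one hair and applies $\pi^*$ to a single decoration), the associated graded splits as a product of tensor products of one-vertex complexes
\[
C_{g,k}=\Bigl(\prod_{r\geq r_0}\bigl(H^\bullet(\bMM_{g,k+r})\otimes\Q[-1]^{\otimes r}\bigr)_{\bbS_r},\ \Delta\Bigr),
\]
and one shows $H(C_{g,k},\Delta)=0$ whenever $2g+k\geq 3$ by an explicit homotopy built from $\Psi$-classes, using $(\pi_j)_!\Psi_j=1$. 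For a graph of total genus $\geq 2$ one can always find a vertex with $2g_x+k\geq 3$, which kills the whole tensor product. This argument stays entirely inside the well-understood ring $H^\bullet(\bMM)$ and never needs to know what $\Delta$ looks like on $H_c(\MM)$.
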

\begin{proof}
	We consider the mapping cone $C$ of the map in question, i.e., 
	\[
	C=(\HGK^\emptyset\oplus \HGK_0, \delta+\Delta).
	\]
	This is a natural extension of $\HGK_0$ is that one merely allows graphs without hairs.
	
	Now we filter the complex $C$ by the number of internal (non-hair) edges in graphs. This is a bounded above complete descending filtration. Hence we are done if we can show that the associated graded complex is acyclic in positive genera.
	Furthermore, $\delta$ creates one internal edge, and $\Delta$ none, hence we may take $(C,\Delta)$ as the associated graded.
	Now $(C,\Delta)$ is a product of direct summands of tensor products of complexes associated to single vertices.
	The complex associated to a single vertex with $k$ internal legs has the form
	\[
	C_{g,k} = (\prod_{r\geq r_0} (H(\bMM_{g,k+r})\otimes \Q[-1]^{\otimes r})_{\bbS_r}, \Delta),
	\]
	with $\Delta$ being the pullback for the forgetful map forgetting one of the "antisymmetric" marked points as above. The lower bound for the product is $r_0=\max(3-2g-k,0)$ and comes from the stability condition. Now if the genus of the graph, i.e., the number of loops plus the total number of genera of vertices, is at least 2, we can always find a vertex such that $2g+k\geq 3$.
	Hence it suffices to show that $H(C_{g,k},\Delta)=0$ in this case.
	We will do this by constructing an explicit homotopy for $\Delta$.
	To this end let 
	\[
	\pi_j : \bMM_{g,k+r+1} \to \bMM_{g,k}
	\]
	be the forgetful map forgetting the location of the $j$-th of the $k$ "antisymmetric" marked points. Then 
	\[
	\Delta = \sum_{j=1}^{r+1} (-1)^{j-1} \pi_j^*.
	\]
	Now let $\Psi_j\in \bMM_{g,k+r+1}$ be the $\Psi$-class at the $j$-th such marking, abusively hiding the number $r$ from the notation. We can assume that it is normalized such that
	\[
	(\pi_j)_{!} \Psi_j = 1.
	\] 
	Then we define our homotopy $h: C_{g,k}  \to C_{g,k}[-1]$ on $\alpha \in H(\bMM_{g,k+r})$ as
	\[
	h(\alpha) = \sum_{j=1}^r (-1)^{j-1} (\pi_j)_{!} (\Psi_j \wedge \alpha)
	\]
	if $r\geq 1$ and set $h(\alpha)=0$ if $r=0$.
	One computes (say first for $r\geq 1$)
	\begin{align*}
	h(\Delta(\alpha)) &=
	\sum_{i=1}^{r+1}\sum_{j=1}^{r+1} (-1)^{i+j} (\pi_i)_{!} (\Psi_i \wedge  \pi_j^*\alpha)
	\\&=
	\sum_{1\leq i<j\leq r+1}(-1)^{i+j}
	(-1)^{i+j} \pi_{j-1}^* (\pi_i)_{!} (\Psi_i \wedge  \alpha)
	+
	\sum_{i=1}^{r+1}
	\underbrace{(\pi_i)_{!} \Psi_i)}_{=1}\alpha
	+
	\sum_{1\leq j<i\leq r+1}
	(-1)^{i+j} \pi_j^* (\pi_{i-1})_{!} (\Psi_i \wedge \alpha)
	\\&=
	(r+1)\alpha - \Delta (h(\alpha)).
	\end{align*}
	Hence we see that $H(C_{g,k})=0$ as desired.
	
\end{proof}

Note that the lower right-hand complex $\HGK^\emptyset$ in \eqref{equ:HGCHGK cd} computes the compactly supported cohomology of the moduli space of (non-pointed) curves by Theorem \ref{thm:GKHMM}. The cohomology of the upper right-hand complex has a spectral sequence (from the filtration on the number of marked points) whose first page is 
\[
\prod_{g\geq 0,n\geq 1 \atop 2g+n\geq 3} 
\left(H_c(\MM_{g,n})\otimes \Q[-1]^{\otimes n}\right)_{\bbS_n} \Rightarrow H(\HGK_0,\delta+\Delta).
\] 
Hence the Theorem gives a relation between the cohomology of the moduli spaces of marked and non-marked curves.
One may also give this a conjectural geometric interpretation.

\begin{conjecture}\label{conj:Delta interpretation}
	Under the identification of $\HGK_0$ with (some version of) compactly differential forms on the moduli spaces of curves with antisymmetrized points, the operation $\Delta$ corresponds geometrically to the pullback under the forgetful map $\pi: \MM_{g,n+1} \to \MM_{g,n}$, forgetting one marked point, and the latter pullback induces a well defined operation on the compactly supported differential forms.
\end{conjecture}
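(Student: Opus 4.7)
The plan is to prove the conjecture by combining a geometric description of the pullback on compactly supported forms with the Getzler-Kapranov chain-level isomorphism of Theorem~\ref{thm:GKHMM}, and then to verify compatibility with the cooperadic structure that governs the splitting differential on $\GK$.

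First, I would address the well-definedness of the pullback on compactly supported forms. The naive pullback of a compactly supported form along $\pi:\MM_{g,n+1}\to \MM_{g,n}$ is not compactly supported because $\pi$ is not proper. Three complementary routes to circumvent this should be explored in parallel. (a) Use the compactified forgetful map $\bar\pi:\bMM_{g,n+1}\to\bMM_{g,n}$, which is proper, and identify $H_c^\bullet(\MM_{g,n})$ with a relative-cohomology complex $H^\bullet(\bMM_{g,n},\partial\bMM_{g,n})$. Because $\bar\pi$ need not preserve the boundary stratifications strictly, the naive map of pairs fails, but the image $\bar\pi^*\alpha$ of a class vanishing on $\partial\bMM_{g,n}$ differs from a class vanishing on $\partial\bMM_{g,n+1}$ only by contributions supported on strata coming from unstable rational tails, and these should be killable by a Gysin correction involving $\psi$-classes. (b) By Poincaré duality one has $H_c^\bullet(\MM_{g,n})\cong H^{6g-6+2n-\bullet}(\MM_{g,n})^\vee$, and the adjoint of the ordinary $\pi^*$ on $H^\bullet$ gives an explicit wrong-way map on compactly supported cohomology. (c) Most concretely, the homotopy identity $h\Delta+\Delta h=(r+1)\id$ from the proof of Theorem~\ref{thm:Delta cohomology} already provides, at the vertex level, a universal formula for $\Delta$ in terms of $(\pi_j)_!$ and the $\Psi_j$-classes, so $\Delta$ itself is cohomologically well-defined and one need only verify that routes (a)-(c) all yield the same operation.

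Second, I would match the geometric $\pi^*$ with the combinatorial $\Delta$ under the Getzler-Kapranov isomorphism. The key observation is that under the Feynman-transform identification $H(\GK)\cong H_c^\bullet(\MM)$, the decoration of a vertex $x$ of a graph $\Gamma$ by a class $c\in H^\bullet(\bMM_{g_x,\gstar(x)})$ corresponds, modulo the boundary contributions produced by edge contractions in $\delta$, to $c$ restricted to the open locus $\MM_{g_x,\gstar(x)}$. Replacing $c$ by $\pi^*c$ at vertex $x$ is then, locally on that vertex, precisely the pullback along the forgetful map on the open stratum. Summing over all vertices matches the total operation of pulling back under the global forgetful map $\pi:\MM_{g,S\sqcup\{*\}}\to\MM_{g,S}$ after combining the markings from all vertices into the single set $S$. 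The sign convention of placing the new hair first must be traced through the Feynman-transform orientation data and compared with the standard orientation on the universal curve.

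Third, the remaining nontrivial step is compatibility with the splitting differential $\delta$. This reduces to showing that the forgetful maps intertwine the modular cooperadic cocompositions of $H_\bullet(\bMM)$, i.e.\ that the diagrams
\[
\begin{tikzcd}
\bMM_{g_1,n_1+1}\times \bMM_{g_2,n_2+1} \ar{r}\ar{d}[swap]{\bar\pi\times\id} & \bMM_{g_1+g_2,n_1+n_2} \ar{d}{\bar\pi}\\
\bMM_{g_1,n_1}\times \bMM_{g_2,n_2+1} \ar{r} & \bMM_{g_1+g_2,n_1+n_2-1}
\end{tikzcd}
\]
and their self-gluing analogues commute up to excess-intersection contributions supported on boundary strata, and that these contributions are exactly the terms that vanish on passing to compactly supported forms. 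These are precisely the geometric counterparts of the two commutative squares verified in the proof of $(\delta+\Delta)^2=0$.

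The main obstacle I foresee is providing a chain-level model of both $\GK$ and of compactly supported forms on $\MM$ in which the Feynman-transform isomorphism of \cite{GK} is strictly equivariant under the forgetful maps, rather than only up to homotopy. The original Getzler-Kapranov construction relies on logarithmic de Rham complexes and their nuclear duals, and combining these with a forgetful-equivariant enhancement of the formality of $\bMM$ from \cite{GNPR} is a nontrivial but self-contained subproblem. I would attempt it either by constructing an explicit forgetful-equivariant minimal model of $H_\bullet(\bMM)$, or, more directly, by promoting the homotopy formula $h(\alpha)=\sum_j(-1)^{j-1}(\pi_j)_!(\Psi_j\wedge\alpha)$ to a local model on the universal curve and then globalizing via the modular structure.
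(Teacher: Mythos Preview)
The statement in question is labelled a \emph{Conjecture} in the paper, and the paper does not attempt to prove it. Immediately after stating it, the authors remark that the main obstacle is precisely that the formality result of \cite{GNPR} covers only the modular operad structure on $H_\bullet(\bMM)$ and says nothing about compatibility with the forgetful maps $\pi:\bMM_{g,n+1}\to\bMM_{g,n}$. They also note that a proof may be available via different methods (Kalugin, personal communication), but none is given in the paper.

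Your proposal is not a proof but a research plan, and you acknowledge this yourself in the final paragraph: the ``main obstacle'' you foresee --- constructing a chain-level model in which the Getzler--Kapranov isomorphism is equivariant under the forgetful maps --- is exactly the obstacle the paper singles out as the reason the statement remains a conjecture. Your routes (a)--(c) for making sense of $\pi^*$ on compactly supported forms are reasonable heuristics, and your observation that the commutative squares underlying $(\delta+\Delta)^2=0$ are the relevant geometric input is correct, but none of this touches the actual gap. The zigzag of quasi-isomorphisms in Theorem~\ref{thm:GKHMM} passes through the minimal model $M$ of \cite{GNPR}, and there is no a priori reason the forgetful maps lift compatibly through $M$; your suggestion to build a ``forgetful-equivariant minimal model'' is a restatement of the problem, not a solution to it. Likewise, promoting the homotopy $h(\alpha)=\sum_j(-1)^{j-1}(\pi_j)_!(\Psi_j\wedge\alpha)$ to a chain-level construction would require exactly the equivariant chain model whose existence is in question. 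In short, you have correctly diagnosed where the difficulty lies and sketched what a proof would need to contain, but the central step remains unaddressed, which is consistent with the paper's own assessment that the statement is open.
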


\begin{rem}
The main problem in showing Conjecture \ref{conj:Delta interpretation} in our framework is that we use the formality result of \cite{GNPR}. This covers only the modular operad structure, but not the forgetful maps, forgetting some of the marked points.
 

We finally remark that Alexey Kalugin probably has a proof of the above conjecture (personal communication).
\end{rem}

\begin{rem}
	By similar arguments as in the proof of Theorem \ref{thm:Delta cohomology} we may in fact compute the part of the cohomology of $(\HGK_0,\delta+\Delta)$ in genus 0 and 1 as well.
	Concretely, following the argument and using the same notation as in that proof, we see that we need to consider only graphs all of whose vertices $x$ satisfy $2g+k\leq 2$.
	The cohomology of the complexes $(C_{g,k}, \Delta)$ is computed as follows:
	\begin{itemize}
		\item In the cases $g=1$, $k=0$ and $g=0$, $k=2$ the cohomology $H(C_{g,k}, \Delta)$ is one-dimensional, corresponding to a single hair attached to the vertex.
		\item In the remaining cases $g=0$, $k<2$ the complex $C_{g,k}$ is 0, because there need to at least two hairs (markings) to satisfy stability, but then the corresponding classes are killed by the anti-symmetrization.
	\end{itemize}

	Overall, one sees that one has non-trivial cohomology only in the genus 1 case, and there the remaining diagrams are the ``hedgehog'' graphs of the form
	\[
		\begin{tikzpicture}[scale=.5]
			\node [int] (v4) at (1.5,0) {};
			\node [int] (v5) at (0.46,1.43) {};
			\node [int] (v2) at (-1.21,0.88) {};
			\node [int] (v1) at (-1.21,-0.88) {};
			\node [int] (v3) at (0.46,-1.43) {};
			\draw  (v2) edge (v1);
			\draw[dotted]  (v1) edge (v3);
			\draw[dotted]  (v3) edge (v4);
			\draw  (v4) edge (v5);
			\draw  (v5) edge (v2);
			\node  (v6) at (-2.02,1.47) {};
			\node  (v7) at (0.77,2.38) {};
			\node  (v8) at (2.5,0) {};
			\node  (v9) at (0.77,-2.38) {};
			\node  (v10) at (-2.02,-1.47) {};
			\draw  (v2) edge (v6);
			\draw  (v5) edge (v7);
			\draw  (v4) edge (v8);
			\draw  (v3) edge (v9);
			\draw  (v1) edge (v10);
			\end{tikzpicture}	
	\] 
	that live in weight 0, and span the compactly supported cohomology of the $\MM_{1,n}$ with antisymmetrized markings.
\end{rem}


\subsection{Incorporating ribbon graphs, and the (conjectural) big picture}\label{sec:big picture}

We can now put together the maps of section \ref{sec:HOGCtoRGC} and those of the previous subsection to obtain a big commutative diagram of complexes (straight arrows)
\[
\begin{tikzcd}
\HGK^\emptyset[-1] \ar{r}{\simeq_{\geq 2}} & (\HGK_0,\delta+\Delta) \ar[-, dashed]{ddr}{\simeq?}  & \\
\GC_0[-1] \ar{r}{\simeq_{\geq 2}} \ar{u} & (\HGC_{0,0},\delta+[m_0,-]) \ar{u} &  \\
\OGC_1[-1] \ar{r}{\simeq_{\geq 2}} \ar{u}{\simeq_{\geq 2}} & (\HOGC_{0,1},\delta+[m_1,-]) \ar{r}  \ar{u}{\simeq} &  (\RGC_0,\delta+\Delta) 
\end{tikzcd}\, ,
\]
where the symbol $\simeq_{\geq 2}$ shall indicate that the map is a quasi-isomorphism in genera $\geq 2$. Recall in particular the definition of the "Bridgeland" differential $\Delta$ on the ribbon graph complex $\RGC_0$ from above, see \eqref{equ:bridgelanddiff_illustration}. 
The middle row is the weight 0 part of the first row.
Given that 
$$H(\HGK_0,\delta)\cong H(\RGC_0, \delta)\cong \prod_{r\geq 1, g\geq 0 \atop 2g+1\geq 3} (H_c(\MM_{g,r})\otimes \Q[-1]^{\otimes r})_{\bbS_r}$$
it is hence natural to conjecture that there is a quasi-isomorphism (possibly a zigzag) between $(\HGK, \delta+\Delta)$ and $\RGC,\delta+\Delta)$  (the dashed line) that makes the right hand triangle commute.

This would then in particular imply the following conjecture of A. C\v ald\v araru (personal communication)
\begin{conjecture}[C\v ald\v araru] \label{conj:caldararu}
	The cohomology of the ribbon graph complex with altered differential $(\RGC,\delta+\Delta)$ can be naturally identified with the compactly supported cohomology of the moduli spaces of curves without marked points in genera $g\geq 2$.
	\[
	H(\RGC^{g},\delta+\Delta) \cong H_c(\MM_g)[-1].
	\]
\end{conjecture}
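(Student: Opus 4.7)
The plan is to produce the dashed zigzag in the diagram of Section~\ref{sec:big picture}, namely a quasi-isomorphism $(\HGK_0,\delta+\Delta) \simeq (\RGC_0,\delta+\Delta)$ making the right-hand triangle commute up to homotopy. Granting such a comparison, Theorem~\ref{thm:Delta cohomology} gives in genera $g\geq 2$ that $H^\bullet(\mB_g\HGK_0,\delta+\Delta) \cong H_c^{\bullet-1}(\MM_g)$, and transporting through the zigzag yields the claimed identification for $\RGC^g$. So the whole question is the construction and comparison of $\Delta$'s.

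To build the comparison, I would work geometrically on both sides. On the Getzler--Kapranov side, assuming Conjecture~\ref{conj:Delta interpretation} (or Kalugin's announced proof thereof), $\Delta$ is the pullback $\pi^*$ along the forgetful map $\pi:\MM_{g,n+1}\to \MM_{g,n}$; thus $(\HGK_0,\delta+\Delta)$ is a model for the simplicial object obtained by taking compactly supported cochains on $\MM_{g,n}$ and gluing these together along $\pi^*$ as $n$ varies, which morally collapses the marked points and should be equivalent to $H_c(\MM_g)[-1]$. On the ribbon graph side, $\RGC_0$ models $H_c(\MM_{g,n})$ via the Strebel/Penner cell decomposition, and the Bridgeland operation \eqref{equ:bridgelanddiff_illustration} of splitting a boundary component by adding an edge should be recognized as the Poincar\'e dual of exactly the cellular operation that corresponds to forgetting a marked point (the created edge is the one that replaces the vanishing marked point). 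I would therefore try to promote the naive identification ``genus $g$ ribbon graph with $n$ faces $\leftrightarrow$ genus $g$ $H^\bullet(\bMM_{g,n})$-decorated graph'' provided on weight-zero classes by Theorem~\ref{thm:main} and \cite{CGP2}, by enlarging the decorations from $\Com^{mod}=H_0(\bMM)$ to all of $H^\bullet(\bMM)$ and checking that $\Delta$ matches on the nose.

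The main obstacle will be this matching of the two $\Delta$'s, for two reasons. First, the modular formality of \cite{GNPR} only concerns operadic compositions, not the forgetful maps $\pi$, so one needs an enhanced formality statement carrying $\pi^*$ as well; this is precisely the content of Kalugin's work, and without it the left-hand $\Delta$ is only an abstract operator. Second, on the ribbon graph side, the identification of ``add an edge across a face'' with the combinatorial trace of forgetting a marked point requires a careful analysis of how a disappearing marked point interacts with the Strebel stratification, in particular at loci where the forgotten point collides with the ribbon graph structure. A possible shortcut around both issues is to show instead that the map $(\HOGC_{0,1},\delta+[m_1,-])\to(\RGC_0,\delta+\Delta)$ from Section~\ref{sec:HOGCtoRGC} is a quasi-isomorphism in genera $\geq 2$; by Theorem~\ref{thm:mainlie} and Proposition~\ref{prop:OGCtoHOGC} the source is quasi-isomorphic to $\OGC_1[-1]$ and thence (by \cite{MultiOriented}) to $\GC_0[-1]$, which by Theorem~\ref{thm:CGP1} and the conjectural extension to all weights should account for all of $H_c(\MM_g)[-1]$. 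Proving surjectivity on cohomology of this last map — i.e.\ that all of $H(\RGC_0,\delta+\Delta)$ is reached from oriented hairy graphs — appears to be the sharpest and hardest step.
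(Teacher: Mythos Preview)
The statement you are addressing is labeled \emph{Conjecture} in the paper, and the paper does not prove it. The surrounding discussion in Section~\ref{sec:big picture} explains exactly why it remains open: one would need the dashed quasi-isomorphism $(\HGK_0,\delta+\Delta)\simeq(\RGC_0,\delta+\Delta)$, and the paper only says that such a zigzag is ``natural to conjecture''. Your proposal is not a proof either; it is a restatement of that same strategy, together with an honest inventory of the missing ingredients (an enhanced formality carrying $\pi^*$, and a cellular identification of the Bridgeland $\Delta$ with the forgetful map). In that sense your write-up and the paper agree completely: both identify the same route and the same obstructions, and neither closes the gap.

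There is, however, a genuine error in your proposed ``shortcut''. You suggest that one might instead prove that the map
\[
(\HOGC_{0,1},\delta+[m_1,-])\longrightarrow(\RGC_0,\delta+\Delta)
\]
is a quasi-isomorphism in genera $\geq 2$. This cannot be true. By Theorem~\ref{thm:mainlie} and Proposition~\ref{prop:OGCtoHOGC} the source is quasi-isomorphic to $\GC_0[-1]$, whose cohomology by Theorem~\ref{thm:CGP1} is precisely the \emph{weight zero} part $W_0H_c(\MM_g)$. But $H_c(\MM_g)$ is not concentrated in weight zero for $g\geq 3$ (already the tautological classes, or the odd cohomology of $\MM_3$, lie in positive weight). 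Hence if the conjecture is true, the map from $\HOGC_{0,1}$ to $(\RGC_0,\delta+\Delta)$ is an injection on cohomology onto the weight zero part, but certainly not surjective, and establishing it as a quasi-isomorphism is impossible. Your phrase ``the conjectural extension to all weights'' has no content here: $\GC_0$ is intrinsically a weight-zero object and there is no known enlargement of it that would capture higher-weight classes in $H_c(\MM_g)$. So this shortcut should be discarded, and the only viable path remains the construction of the dashed arrow, which is genuinely open.
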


\end{document}